\newcommand{\esca}[1]{\langle {#1} \rangle}
\newcommand{\doble}[1]{\ll \hspace{-0.1cm} {#1} \hspace{-0.1cm} \gg}
\newcommand{\curvo}[1]{\prec \hspace{-0.1cm} {#1} \hspace{-0.1cm} \succ}
\def\dist{{\rm dist}}
\def\Ccal{\mathcal{C}}
\def\Dcal{\mathcal{D}}
\def\Ncal{\mathcal{N}}
\def\Mcal{\mathcal{M}}
\def\Wgot{\mathfrak{W}}
\def\Rcal{\mathcal{R}}
\def\Hcal{\mathcal{H}}
\def\c{\mathbb{C}}
\def\d{\mathbb{D}}
\def\R{\mathbb{R}}\def\r{\mathbb{R}}
\def\Z{\mathbb{Z}}\def\z{\mathbb{Z}}
\def\N{\mathbb{N}}\def\n{\mathbb{N}}
\def\s{\mathbb{S}}
\def\h{\mathbb{H}}
\def\k{\mathbb{K}}
\newtheorem{theorem}{Theorem}[section]
\newtheorem{proposition}[theorem]{Proposition}
\newtheorem{claim}[theorem]{Claim}
\newtheorem{lemma}[theorem]{Lemma}
\newtheorem{corollary}[theorem]{Corollary}
\newtheorem{remark}[theorem]{Remark}
\newtheorem{definition}[theorem]{Definition}
\theoremstyle{definition}
\numberwithin{equation}{section}
\begin{document}

\title[Null Curves in $\c^3$ and Calabi-Yau Conjectures]
{Null Curves in $\c^3$ and Calabi-Yau Conjectures}

\author[A.~Alarc\'{o}n]{Antonio Alarc\'{o}n}
\address{Departamento de Geometr\'{\i}a y Topolog\'{\i}a \\
Universidad de Granada \\ E-18071 Granada \\ Spain}
\email{alarcon@ugr.es}

\author[F.J.~L\'{o}pez]{Francisco J. L\'{o}pez}
\address{Departamento de Geometr\'{\i}a y Topolog\'{\i}a \\
Universidad de Granada \\ E-18071 Granada \\ Spain}
\email{fjlopez@ugr.es}

%\date{\today}

\thanks{Research partially
supported by MCYT-FEDER research project MTM2007-61775 and Junta
de Andaluc\'{i}a Grant P09-FQM-5088}

\subjclass[2010]{53C42; 32H02, 53A10}
\keywords{Null curves, minimal surfaces, Bryant surfaces, holomorphic immersions}

\begin{abstract} For any  open orientable surface $M$ and convex domain $\Omega\subset \c^3,$ there exist a  Riemann surface $N$ homeomorphic to $M$ and a complete proper  null curve  $F:N\to\Omega.$ This result follows from a general existence theorem with many applications. Among them, the followings:
\begin{itemize}
\item For any convex domain $\Omega$ in $\c^2$ there exist   a Riemann surface $N$ homeomorphic to $M$ and a complete proper holomorphic immersion $F:N\to\Omega.$  Furthermore, if $D \subset \r^2$ is a convex domain and $\Omega$ is the solid right cylinder $\{x \in \c^2 \,|\, \mbox{Re}(x) \in D\},$ then $F$ can be chosen so that ${\rm Re}(F):N\to D$ is proper. 
\item There exist a Riemann surface $N$ homeomorphic to $M$ and a complete bounded holomorphic null immersion $F:N \to {\rm SL}(2,\c).$
\item There exists a complete bounded CMC-1 immersion $X:M \to \h^3.$
\item For any convex domain $\Omega \subset \r^3$ there exists a complete proper minimal immersion $(X_j)_{j=1,2,3}:M \to \Omega$  with vanishing flux. Furthermore, if $D \subset \r^2$ is a convex domain and $\Omega=\{(x_j)_{j=1,2,3} \in \r^3 \,|\, (x_1,x_2) \in D\},$ then $X$ can be chosen so that $(X_1,X_2):M\to D$ is proper. 
\end{itemize} 
Any of the above surfaces can be chosen with hyperbolic conformal structure.
\end{abstract}

\maketitle

\thispagestyle{empty}

%%%%%%%%%%%%%%%%%%%%%%%
%%%%%%%%%%%%%%%%%%%%%%%
%%%%%%%%%%%%%%%%%%%%%%%%%%%%%%%%%%%%%%%%%%%%%%%%%%%%%%%%%%%%%%%%%%%%%%%%%%%%%%%%%%

\section{Introduction}\label{sec:intro}

Calabi \cite{calabi} asked whether there exist complete minimal surfaces in a bounded domain, or more generally,  with bounded projection into a straight line. This problem is known in the literature as the {\em Calabi-Yau problem} for immersed minimal surfaces in $\r^3.$ Variations of this problem which have been considered deal with immersed surfaces which
are also proper in a domain of $\r^3.$ The first answer to  Calabi's question was given by Jorge and Xavier \cite{j-x}, who exhibited complete non-flat minimal discs in a slab of $\r^3.$ Later, Yau \cite{ya1,ya2} revisited these conjectures and opened new lines for research. It was Nadirashvili  \cite{nadi} who developed a powerful technique for constructing  complete bounded minimal surfaces in  $\r^3.$ His examples are minimal discs with never vanishing Gaussian curvature, providing  counterexamples to classical Hadamard's conjecture for negatively curved surfaces as well. Both Jorge-Xavier's and Nadirashvili's constructions make use of the classical Runge approximation theory for holomorphic functions on planar labyrinths of compact sets. These ideas have been a fountain of insight that has strongly influenced the global theory of minimal surfaces along the last decade. L\'{o}pez, Mart\'{i}n and Morales \cite{l-m-m} added handles to Nadirashvili's surfaces, and some years later  Alarc\'{o}n, Ferrer, Mart\'{i}n and Morales \cite{m-m1,m-m2,a-f-m} constructed proper complete minimal surfaces in smooth domains of $\r^3,$ under some restrictions on the topology of the surfaces and the geometry of the domains.  Recently, Ferrer, Mart\'{i}n and Meeks \cite{f-m-m}  have given  a complete solution to the proper Calabi-Yau problem for minimal surfaces of arbitrary topology in both convex and smooth bounded domains of $\r^3,$ even with disjoint limit sets for distinct ends.

The embedded Calabi-Yau  problem for minimal surfaces has radically  different nature. Colding and Minicozzi \cite{c-m} have proved that any  
complete embedded minimal surface in $\r^3$ with finite topology is proper in $\r^3,$ and  Meeks, P\'{e}rez and Ros \cite{m-p-r} have extended this result to the family of minimal surfaces with finite genus and countably many ends.

Calabi-Yau and  Hadamard's conjectures are closely related and make sense for a wide range of  surfaces and ambient manifolds. Given a Riemannian manifold $\mathfrak{M},$ the (immersed) Calabi-Yau problem in $\mathfrak{M}$ deals with the existence of complete proper submanifolds in domains of $\mathfrak{M},$ with a given constrain on its geometry (minimal, CMC, non-positively curved,...).  This paper is devoted to the corresponding Calabi-Yau problem for null curves in $\c^3,$ or being more precise, to the existence of complete proper {\em null curves} in convex domains of  $\c^3.$ 

Given an open Riemann surface $N,$ a map $F=(F_j)_{j=1,2,3}:N \to \c^3$ is said to be a null curve in $\c^3$ if $F$ is a holomorphic immersion and  $\sum_{j=1}^3 (dF_j)^2=0.$ The Riemannian metric on $N$ induced by the Euclidean metric in $\c^3$ is given by $ds^2_F:=\sum_{j=1}^3 |dF_j|^2.$

Calabi-Yau problem for null curves in $\c^3$ is interesting by itself, but also because it provides a key for solving other  Calabi-Yau problems, namely, for null curves in ${\rm SL}(2,\c),$ holomorphic immersed curves in $\c^2,$  CMC-1 surfaces (or Bryant surfaces) in $\h^3$ and minimal surfaces in $\r^3.$

Jones \cite{jones} constructed a complete bounded  holomorphic immersion of the unit disc $\d$ in $\c^2,$ a complete bounded  holomorphic embedding of $\d$ in $\c^3,$  and a  complete proper holomorphic immersion of $\d$ in the unit ball of $\c^4.$ As a consequence, he produced complete bounded  null curves in $\c^n$ for any $n \geq 4.$ On the other hand, Bourgain \cite{bou} showed that there are no complete bounded null curves in $\c^2.$ However, the Calabi-Yau problem for null curves in $\c^3$ has remained open for a long time. 
The first approach to this question was made by Mart\'{i}n, Umehara and Yamada \cite{muy1,MUY3}.

Our Main Theorem below (see Theorem \ref{th:main}) represents a wide generalization of  all these results, and as we will see later, has many interesting consequences. For a rigorous statement, the following notations are required.

Throughout this paper we adopt column notation for both vectors and matrices of linear transformations in $\c^3,$ and identify $$\c^3 \ni (z_1,z_2,z_3)^T \equiv ({\rm Re}(z_1),{\rm Im}(z_1), {\rm Re}(z_2),{\rm Im}(z_2),{\rm Re}(z_3),{\rm Im}(z_3))^T \in \r^6,$$ where as usual $(\cdot)^T$ means "transpose".

\begin{definition} \label{def:wide}
If $\rho=\{\rho_i\}_{1\leq i\leq n} \subseteq \{1,\ldots,6\}$ is a strictly increasing sequence, $n \geq 1,$  and $\rho^*=\{\rho^*_i\}_{1\leq i\leq 6-n}$ is the (possibly void) complementary one in   $\{1,\ldots,6\},$ we denote by
$\r^\rho=\{(x_j)_{j=1,\ldots,6} \in \r^6\,|\, x_j=0 \; \forall \, j \in \rho^*\},$ and 
$\Pi_\rho:\r^6 \to \r^\rho$ the corresponding Euclidean orthogonal projection.

The sequence  $\rho$  is said to be  {\em wide} if $n\geq 2$ and $\rho \neq \{2j-1,2j\},$ $j=1,2,3.$  

Given  $\Omega \subset \r^\rho,$ we denote by $\mathcal{C}_\rho(\Omega)$ the cylinder $\{x \in \r^6\,|\, \Pi_\rho(x) \in \Omega\}.$  When $\rho^*=\emptyset$ then $\r^\rho=\r^6,$  $\Pi_\rho={\rm Id}_{\r^6}$ and $\mathcal{C}_\rho(\Omega)=\Omega,$  and we make the conventions $\r^{\rho^*}=\{\vec{0}\}$ and $\Pi_{\rho^*}\equiv \vec{0}.$
\end{definition}

Our main result asserts:

\begin{quote}
{\bf Main Theorem.} {\em 
Let $\rho$ be a wide sequence in $\{1,\ldots,6\},$ and let $\Omega$ be a convex domain in $\r^\rho$ (possibly all $\r^\rho$).

Then for any open orientable surface $M$ there exist a hyperbolic Riemann surface $N$ homeomorphic to $M$ and a complete null curve $F:N\to \mathcal{C}_\rho(\Omega)$ such that   $\Pi_\rho\circ F:N\to \Omega$ is proper.}
\end{quote}

It is classically known that any open hyperbolic Riemann surface $\Mcal$ carries neither proper holomorphic functions $f:\Mcal \to \c$ nor proper harmonic functions $h:\Mcal \to \r.$  
Therefore, Main Theorem does not hold if $\rho$ is not wide and $\Omega=\r^\rho,$ and in this sense is sharp (see Remark \ref{falla1}).

Our construction method is different to the ones in \cite{jones, muy1}. The main tools in this paper come from approximation theory by meromorphic functions, but in this case without using labyrinths of compact sets. A key point is to use a Mergelyan's type approximation result which provides an enormous capability for modeling null curves in $\c^3$ (see Lemma \ref{lem:runge} and \cite{al}). Our arguments rely only on the geometry of $\c^3,$ and the involved approximation results for null curves are of {\em extrinsic} nature. Roughly speaking, the null curve $F$ in the theorem is obtained by deforming recursively a sequence of compact null curves in $\Ccal_\rho(\Omega).$ Unlike previous methods, during the deformation we have direct control over the immersion itself instead of over its  Weierstrass data. Furthermore, completeness and properness can be achieved at the same time in the process and checked extrinsically as well. 

Different choices of sequence $\rho$ and convex domain $\Omega \subset \r^\rho$ generate a list of suggestive corollaries. The most straightforward
one is the existence of complete bounded null curves in $\c^3.$

\begin{quote} {\bf Corollary I [Calabi-Yau problem in $\c^3$].} {\em For any open orientable surface $M$ and any convex domain $\Omega$ in $\c^3$ (possibly $\Omega=\c^3$), there exist a  Riemann surface $N$ homeomorphic to $M$ and a  proper complete null curve $F:N\to\Omega.$ } 
\end{quote}
In particular, this answers affirmatively the Problem 1 in \cite{muy1}: ``{\em Are there complete null curves properly immersed in the unit ball of $\c^3$?}". 
A partial result in the line of Corollary I can also be found in \cite{a-f-l}. 

Denote by $\lfloor \,,\,\rfloor$ the Hermitian inner product in ${\rm SL}(2,\c)$ given by $\lfloor A,B \rfloor={\rm trace}(A \cdot \bar{B}^T),$ $A,B\in{\rm SL}(2,\c).$ The operator $\bar{\cdot}$ means complex conjugation. A map $Z:N\to {\rm SL}(2,\c)$ is said to be a {\em null curve} in ${\rm SL}(2,\c)$ if $Z$ is a holomorphic immersion  and $\det (dZ)=0.$ The Riemannian metric  on $N$ induced by $\lfloor \,,\,\rfloor$ is given by $ds_Z^2 =\lfloor dZ,dZ\rfloor.$ The following correspondence is a biholomorphism preserving null curves (see \cite{muy1}):
\begin{equation} \label{eq:corres}
\mathcal{T}:\c^3-\{z_3=0\}\to \{(a_{ij})\in{\rm SL}(2,\c)\,|\, a_{11}\neq 0\},\;\;
\mathcal{T}((z_j)_{j=1,2,3})=\frac1{z_3}\left(\begin{matrix} 1 & z_1+\imath z_2\\ z_1-\imath z_2 & z_1^2+z_2^2+z_3^2\end{matrix}\right),
\end{equation}
$\imath=\sqrt{-1}.$  
The transformation $\mathcal{T}$ provides complete bounded null curves in ${\rm SL}(2,\c)$ when applied to complete bounded null curves in $\c^3-\{|z_3|>1\}$ \cite{muy1}. Unfortunately, proper examples in ${\rm SL}(2,\c)$ can not be constructed by this procedure.  Taking into account Corollary I, we  get that:

\begin{quote}{\bf Corollary II [Calabi-Yau problem in ${\rm SL}(2,\c)$].} {\em For any open orientable surface $M,$ there exist a  Riemann surface $N$ homeomorphic to $M$ and a complete bounded null curve $F:N \to {\rm SL}(2,\c).$}
\end{quote}

Let $\h^3=\{(x_0,x_1,x_2,x_3) \in \r^4\,|\, x_1^2+x_2^2+x_3^2+1=x_0^2,\,x_0>0\}$ be the hyperboloid model of the 3-dimensional hyperbolic space. We call $\langle,\,\rangle_0$ as the hyperbolic metric in $\h^3$ induced by the 4-dimensional Lorentz-Minkowski space $\mathbb{L}^4$ of signature $(-,+,+,+).$   Up to the canonical identification $$(x_0,x_1,x_2,x_3) \equiv  \left( \begin{array}{ccc}
x_0+x_3 & x_1 +\imath x_2 \\
x_1-\imath x_2 & x_0-x_3
\end{array} \right),$$ $\h^3=\{A \cdot \bar{A}^T\,|\,A\in{\rm SL}(2,\c)\}.$ With this language, Bryant's projection $\mathcal{B}:{\rm SL}(2,\c) \to \h^3,\; \mathcal{B}(A)=A\cdot \bar{A}^T,$ maps null curves in ${\rm SL}(2,\c)$  into conformal immersions of  mean curvature $H=1$ in $\h^3$. Furthermore, if $Z:N \to {\rm SL}(2,\c)$ is a null curve then the pull back $(Z\cdot \bar{Z}^T)^* \langle,\,\rangle_0$ coincides with $\frac1{2}ds_Z^2$ (see \cite{bry, u-y} for a good setting). 

The family  of complete CMC-1 surfaces in $\h^3$ with finite topology  is very vast, see \cite{r-u-y,p-p} for a good reference. For the arbitrary topology case,
there is no general existence result available known to the authors. Regarding  Calabi-Yau questions, and as pointed out in \cite{muy1}, applying Bryant's projection $\mathcal{B}$ to the complete bounded null curves of Corollary II we get that:
\begin{quote}{\bf Corollary III [Calabi-Yau problem in $\h^3$].} {\em For any open orientable surface $M,$ there exists a complete bounded CMC-1 immersion $X:M \to \h^3.$}
\end{quote}

Mart\'{i}n, Umehara and Yamada \cite{muy2}  extended Jones' existence result \cite{jones} to complete bounded complex submanifolds with arbitrary finite genus and finitely many ends in $\c^2.$ On the other hand, the existence of proper holomorphic immersions in $\c^2$ with arbitrary topological type is well known \cite{bis,nar,rem,al}.  From Main Theorem  it follows considerably more:

\begin{quote} {\bf Corollary IV [Calabi-Yau problem in $\c^2$].} {\em For any open orientable surface $M$ and any convex domain $\Omega$ in $\c^2$ (possibly $\Omega=\c^2$), there exist   a Riemann surface $N$ homeomorphic to $M$ and a complete proper holomorphic immersion $F:N\to\Omega.$ 

Furthermore, if $D \subset \r^2$ is a convex domain and $\Omega$ is the solid right cylinder $\{x \in \c^2 \,|\, {\rm Re}(x) \in D\},$ then $F$ can be chosen so that ${\rm Re}(F):N\to D$ is proper.} 
\end{quote}

The real part of a null curve in $\c^3$ is a minimal immersion in $\r^3$ with {\em vanishing flux}, that is to say, such that the integral of the conormal vector to the immersion  along any arc-length parameterized closed curve in the surface vanishes.   As a consequence of  Main Theorem,
\begin{quote} 
{\bf Corollary V [Calabi-Yau problem in $\r^3$].} {\em For any open orientable surface $M$ the following assertions hold:
\begin{enumerate}[{\rm (i)}]
\item For any convex domain $\Omega \subset \r^3$ (possibly $\Omega=\r^3$), there exists a complete proper minimal immersion $X:M\to\Omega$ with vanishing flux. 
\item For any convex domain $D \subset \r^2$ (possibly $D=\r^2$), there exists a complete minimal immersion  $X=(X_j)_{j=1,2,3}:M\to \r^3$ with vanishing flux such that $(X_1,X_2)(M)\subset D$ and  $(X_1,X_2):N\to D$ is proper.
\item There exists a bounded complete flux vanishing minimal immersion $X:M \to \r^3$ such that all its associate immersions are bounded.
\end{enumerate}}
\end{quote}
Although certainly Corollary V-(i) is strongly related to Ferrer-Mart\'{i}n-Meeks theorem \cite{f-m-m}, these results do not imply each other. 
Recently, the authors \cite{al} have constructed minimal surfaces with arbitrary conformal structure properly projecting into $\r^2,$ answering a question posed by Schoen and Yau \cite[p. 18]{s-y}.  Corollary V-(ii) shows that the analogous result for convex domains of $\r^2$ holds as well. 

Finally, we remark that all the open Riemann surfaces involved in the above corollaries are of hyperbolic conformal type.

%%%%%%%%%%%%%%%%%%%%%%%
%%%%%%%%%%%%%%%%%%%%%%%
%%%%%%%%%%%%%%%%%%%%%%%%%%%%%%%%%%%%%%%%%%%%%%%%%%%%%%%%%%%%%%%%%%%%%%%%%%%%%%%%%%

\section{Preliminaries}
We denote by $\|\cdot\|$ and ${\rm dist}(\cdot,\cdot)$ the Euclidean norm and distance in $\k^n,$ where $\k=\r$ or $\c,$ and for any compact topological space $K$ and continuous map $f:K \to \k^n$ we denote by 
\[\|f\|=\max\{\|f(p)\|\,|\, p \in K\}
\]
the maximum norm of $f$ on $X.$

Given an $n$-dimensional topological manifold $M,$ we denote by $\partial M$ the $(n-1)$-dimensional topological manifold determined by its boundary points. For any  $A \subset M,$ $A^\circ$ and $\overline{A}$ will denote the interior and the closure of $A$ in $M,$ respectively.  Open connected subsets of $M-\partial M$ will be called {\em domains}, and those proper topological subspaces of $M$ being $n$-dimensional manifolds with boundary are said to be  {\em regions}. If $M$ is a topological surface,  $M$ is said to be {\em open} if it is non-compact and $\partial M =\emptyset.$ 

\subsection{Riemann surfaces}

An open Riemann surface is said to be {\em hyperbolic} if it carries non constant negative subharmonic functions.

\begin{remark}
Throughout this paper $\Ncal$ and $\sigma_\Ncal^2$ will denote a fixed but arbitrary open hyperbolic Riemann surface and a conformal Riemannian metric on it, respectively.
\end{remark}

In the following we introduce the necessary notations for a well understanding of the paper. 

A Jordan arc in $\mathcal{ N}$ is said to be analytical if it is contained in an open analytical Jordan arc in $\mathcal{ N}.$

Classically, a compact region $A\subset\Ncal$ is said to be Runge if $\Ncal-A$ has no bounded (i.e., relatively compact in $\Ncal$) components, or equivalently, if the inclusion map $\iota_A: A\hookrightarrow \Ncal$ induces a group monomorphism  $(\iota_A)_*:\Hcal_1(A,\z) \to \Hcal_1(\Ncal,\z),$ where $\Hcal_1(\cdot,\z)$ means first homology group with integer coefficients. For convenience we will extend this notion to a general subset $A\subset\Ncal,$ and say that a   $A$ is {\em Runge} if  $(\iota_A)_*:\Hcal_1(A,\z) \to \Hcal_1(\Ncal,\z)$ is injective. In this case we identify the groups  $\Hcal_1(A,\z)$ and  $(\iota_A)_*(\Hcal_1(A,\z)) \subset \Hcal_1(\Ncal,\z)$  via $(\iota_A)_*$ and consider $\Hcal_1(A,\z) \subset \Hcal_1(\Ncal,\z).$ 

Two Runge subsets $A_1,$ $A_2\subset \Ncal$ are said to be {\em isotopic} if $\Hcal_1(A_1,\z)= \Hcal_1(A_2,\z).$ Two Runge subsets $A_1,$ $A_2 \subset \Ncal$ are said to be {\em homeomorphically isotopic} if there exists a homeomorphism $\sigma: A_1 \to A_2$ such that $\sigma_*={\rm Id}_{\Hcal_1(A_1,\z)},$ where $\sigma_*$ is  the induced group morphism on homology. In this case $\sigma$ is said to be an isotopical homeomorphism. Two Runge domains (or compact regions) with finite topology  in $\Ncal$  are isotopic if and only if they are homeomorphically isotopic.

\begin{definition}[Admissible set]
A compact subset $S\subset\Ncal$ is said to be admissible if and only if:
\begin{itemize}
\item $M_S:=\overline{S^\circ}$ is a finite collection of pairwise disjoint compact regions in $\Ncal$ with   $\mathcal{ C}^0$ boundary,
\item $C_S:=\overline{S-M_S}$ consists of a finite collection of pairwise disjoint analytical Jordan arcs, 
\item any component $\alpha$ of $C_S$  with an endpoint  $P\in M_S$ admits an analytical extension $\beta$ in $\Ncal$ such that the unique component of $\beta-\alpha$ with endpoint $P$ lies in $M_S,$ and
\item $S$ is Runge.
\end{itemize}
\end{definition}

Let $W$ be a Runge domain  of finite topology in $\Ncal$, and let $S$ be an admissible subset in $\Ncal.$ $W$ is said to be a {\em tubular neighborhood} of $S$ if $S \subset W,$ $S$ is isotopic to $W$ and $\chi(W-S)=0,$ where $\chi(\cdot)$ means Euler characteristic. In other words, if $S \subset W$ and $W-S$ consists of a finite collection of pairwise disjoint open annuli.

%%%%%%
For any subset $A \subset \Ncal,$ we denote by  
\begin{itemize}
\item ${\mathcal{ F}_0}(A)$ the space of continuous functions $f:A \to{\c}$ which are holomorphic on an open neighborhood  of $A$ in $\Ncal,$ 
\item $\mathcal{ F}_0^*(A)$ the space of continuous functions $f:A \to \c$ being holomorphic on $A^\circ.$ 
\item $\Omega_0(A)$ the space of holomorphic
1-forms on an open neighborhood of $A$ in $\Ncal,$ and 
\item $\Omega_0^*(A)$ the space of complex 1-forms $\theta$ of type $(1,0)$ that are continuous on $A$ and holomorphic on $A^\circ.$ As usual, a 1-form $\theta$ on $A$ is said to be of type $(1,0)$ if for any conformal chart $(U,z)$ in $ \mathcal{ N},$ $\theta|_{U \cap A}=h(z) dz$ for some function $h:U \cap S \to \c.$
\end{itemize}

Let $S$ be an admissible subset of $\Ncal.$ 

A function $f \in \mathcal{F}_0^*(S)$ is said to be {\em smooth} if $f|_{M_S}$
admits a smooth extension $f_0$ to a domain $W$ containing $M_S,$ and for any component $\alpha$ of $C_S$
and any open analytical Jordan arc $\beta$ in $\Ncal$ containing $\alpha,$  $f$ admits a smooth extension $f_\beta$ to $\beta$
satisfying that $f_\beta|_{W \cap \beta}=f_0|_{W \cap \beta}.$ 

A 1-form $\theta\in \Omega_0^*(S)$ is said to be {\em smooth} if,
for any closed conformal disk $(U,z)$ on $\Ncal$ such that $ S\cap U$ is admissible, the function $\theta/dz\in \mathcal{F}_0^*(S)$ is smooth. 

Given a smooth function $f\in \mathcal{F}_0^*(S),$ we set $df \in \Omega_0^*(S)$
as the smooth 1-form given by $df|_{M_S}=d (f|_{M_S})$ and $df|_{\alpha \cap U}=(f \circ \alpha)'(x)dz|_{\alpha \cap U},$
where $(U,z=x+i y)$ is a conformal chart on $\Ncal$ such that $\alpha \cap U=z^{-1}(\R \cap z(U)).$
A smooth 1-form $\theta \in \Omega_0^*(S)$ is said to be {\em exact} if $\theta=df$ for some smooth $f \in \mathcal{F}_0^*(S),$
or equivalently if $\int_\gamma \theta=0$ for all $\gamma \in \mathcal{ H}_1(S,\Z).$

The $\mathcal{C}^1$-norm on $S$ of a smooth $f\in \mathcal{F}_0^*(S)$ is defined by
\[
\|f\|_1=\max\{\|f(p)\|+\|\frac{df}{\sigma_\Ncal}(p)\|\,| \;p \in S\}.
\]

A sequence of smooth functions $\{f_n\}_{n \in \n}\subset \mathcal{F}_0^*(S)$ is said to converge in the {\em $\Ccal^1$-topology} to a smooth function $f\in \mathcal{F}_0^*(S)$ if $\{\|f-f_n\|_1\}_{n \in \n}\to 0.$ If in addition $f_n$ is (the restriction to $S$ of) a holomorphic function on an open neighborhood $W$ of $S$ in $\Ncal$ for all $n,$  we also say that $f$ can be {\em uniformly $\Ccal^1$-approximated} on $S$ by  functions in $\mathcal{F}_0(W).$

Likewise one can define the notions of smoothness, (vectorial) differential,  $\Ccal^1$-norm and uniform $\Ccal^1$-approximation for maps $f:S \to \c^k,$ $k \in \n.$

%
%Let $W\subset \Ncal$  be a domain containing $S.$ We shall say that a function  $f \in \mathcal{ F}_0^*(S)$ can be $\mathcal{C}^0$-approximated on $S$ by functions in $\mathcal{ F}_0(W)$ if there exists  $\{f_n\}_{n \in \n} \subset \mathcal{ F}_0(W)$ such that $\{f_n\}_{n\in\n}\to f$ in the $\mathcal{C}^0-$topology on $S,$ that is to say, such that $\{\|f_n-f\|\}_{n \in \n} \to 0$ uniformly on $S.$   Likewise, a 1-form $\theta \in \Omega_0^*(S)$ can be uniformly approximated on $S$ by 1-forms in $\Omega_0(W)$ if there exists  $\{\theta_n\}_{n \in \n}\subset \Omega_0(W)$ such that $\{\|\frac{\theta_n-\theta}{dz}\|\}_{n \in \n} \to 0$ uniformly on $S \cap U,$ for any conformal closed disc $(U,dz)$ on $W.$ Finally, we say that a function $f \in \mathcal{ F}_0^*(S)$ can be uniformly $\mathcal{C}^1$-approximated  on $S$ by functions in $\mathcal{ F}_0(W)$ if there exists  $\{f_n\}_{n \in \n} \subset \mathcal{ F}_0(W)$ such that $\{\|f_n-f\|\}_{n \in \n} \to 0$ and $\{\|df_n-df\|\}_{n \in \n} \to 0$ uniformly on $S.$ In a similar way we define the uniform approximation (and $\mathcal{C}^1$-approximation) on $S$ of maps in $\mathcal{F}_0^*(S)^3\equiv\{(f_j)_{j=1,2,3}:S \to \c^3\,|\, f_j\in \mathcal{F}_0^*(S),\, j=1,2,3\}$ by functions in $\mathcal{F}_0(W)^3\equiv\{(f_j)_{j=1,2,3}:W \to \c^3\,|\, f_j\in \mathcal{F}_0(W),\, j=1,2,3\}.$

%%%%%%%%%%%%%%%%%%%%%%%%%%%%%%%%%%%%%%%%%%%%%%%%%%%%%%%%%%%%%%%%

\subsection{Null Curves in $\c^3$ and complex orthogonal transformations} \label{sec:null}
 
Let us start this subsection by introducing some operators which are strongly related to the
geometry of $\c^3$ and null curves. Let $A\subset \c^3.$ We denote by
\begin{itemize}
\item $\doble{\,,\,}:\c^3\times\c^3\to \c,$ $\doble{ u,v }=\bar{u}^T \cdot v,$ the usual Hermitian inner product in $\c^3,$  
\item $\doble{ A}^\bot=\{v\in\c^3\,|\, \doble{ u,v}=0 \, \forall u \in A\},$  
\item $\langle\,,\,\rangle={\rm Re}(\doble{\,,\,}):\c^3\times\c^3\to\r$ the Euclidean scalar product of $\c^3\equiv\r^6,$ 
\item $\langle A\rangle^\bot=\{v\in\c^3\,|\, \langle u,v \rangle=0 \, \forall u \in A\},$
\item $\curvo{\,,\,}:\c^3\times\c^3\to\c$ the complex symmetric bilinear 1-form given by $\curvo{  u, v} = u^T\cdot v,$ and 
\item $\curvo{  u}^\bot=\{v\in\c^3\,|\,\curvo{  u, v}=0\}.$ 
\end{itemize}

Notice that $\curvo{ \overline{u}}^\bot= \doble{ u}^\bot\subset\esca{u}^\bot$ for all $u\in\c^3,$ and the equality holds iff $u=\vec{0}:=(0,0,0)^T.$ 

A vector $u \in \c^3-\{\vec{0}\}$ is said to be {\em null} if $\curvo{ u,u}=0.$ We denote by 
\[
\Theta=\{u \in \c^3-\{\vec{0}\} \,|\, u \;\mbox{is null}\}.
\]

\begin{remark}\label{rem:nulidad}
$\Theta=\{(\frac12 z (1-w^2),\frac{i}{2} z(1+w^2),z w) \,|\, w,\,z \in \c,\, z\neq 0 \}.$ As a consequence, $\Theta$ is a complex conical submanifold of $\c^3$ not contained in a finite union of real (or complex)  hyperplanes of $\c^3.$ 
\end{remark}

A basis $\{u_1,u_2,u_3\}$ of $\c^3$ is said to be {\em $\curvo{\,,\,}$-conjugate} if $\curvo{ u_j,u_k}=\delta_{jk},$ $j,k\in\{1,2,3\}.$ Likewise we define the notion of $\curvo{\,,\,}$-conjugate basis of a complex subspace $U,$ provided that $\curvo{\,,\,}|_{U \times U}$ is a non degenerate complex bilinear form.

We denote by $\mathcal{O}(3,\c)$ the complex orthogonal group $\{A\in\mathcal{M}_3(\c)\,|\, A^T A=I_3\},$ i.e., the group of matrices  whose column vectors determine a $\curvo{ \,,\,}$-conjugate basis of $\c^3.$ As usual, we also denote by $A:\c^3 \to \c^3$ the complex linear transformation induced by $A \in \mathcal{O}(3,\c).$ Observe that
\begin{equation}\label{eq:ATheta}
\curvo{Au,Av}=\curvo{u,v} \quad\text{and}\quad \doble{A u,\overline{A} v}=\doble{u,v}, \quad\forall u,v\in\c^3,\; A\in\mathcal{O}(3,\c).
\end{equation}

Let $M$ be an open Riemann surface. Using the above language, a holomorphic map $F:M\to\c^3$ is a null curve iff $\curvo{ \Phi,\Phi}=0$ and $\doble{\Phi,\Phi}$ never vanishes on $M,$ where $\Phi=dF.$ Conversely, given an  exact holomorphic vectorial 1-form $\Phi$ on $M$ satisfying that $\curvo{ \Phi,\Phi}=0$ and $\doble{\Phi,\Phi}$ never vanishes on $M,$ the map $F:M\to\c^3,$ $F(P)=\int^P \Phi,$ defines a null curve in $\c^3.$ In this case $\Phi$ is said to be the {\em Weierstrass representation} of $F.$

If $F:M\to\c^3$ is a null curve, then the pull back metric $ds_F^2:= F^* \esca{\,,\,}$ on $M$ coincides with  $\esca{dF,dF}=\doble{dF,dF}.$
\begin{definition}
Given two subsets $V_1, V_2\subset M,$ we denote by ${\rm dist}_{(M,F)}(V_1,V_2)$ the intrinsic distance  between $V_1$ and $V_2$ in the Riemannian surface $(M,ds_F^2).$
\end{definition}
\begin{remark}\label{rem:AF}
Let $F:M\to\c^3$ be a null curve and $A=(a_{jk})_{j,k=1,2,3}\in\mathcal{O}(3,\c).$ Then $A\circ F:M\to\c^3$ is also a null curve and $ds_F^2\geq \frac1{ \|A\|^2} ds_{A\circ F}^2,$ where $\|A\|\,= \big(\sum_{j,k}  |a_{jk}|^2  \big)^{1/2}.$
\end{remark}

%%%%%%%%%%%%%%%%%%%%%%%%%%%%%%%%%%%

The following definitions deal with the notion for {\em null curve} on  admissible subsets.

\begin{definition}
Given a proper subset $M\subset \Ncal,$ we denote by ${\sf N}(M)$ the space of maps $X:M \to\c^3$ extending as a null curve to an open neighborhood of $M$ in $\Ncal.$
\end{definition}

\begin{definition}\label{def:gen-null}
Let $S\subset \Ncal$ be an admissible subset.
A smooth map $F\in\mathcal{F}_0^*(S)^3$ is said to be a {\em generalized null curve} in $\c^3$ if it satisfies the following properties:
\begin{itemize}
\item $F|_{M_S}\in {\sf N}(M_S)$ and
\item $\curvo{ dF,dF}=0$ and  $\doble{ dF,dF }$ never vanishes on $S.$
\end{itemize}
\end{definition}

The following Mergelyan's type result for null curves is a key tool in this paper. It will be required to approximate generalized null curves by null curves defined on larger domains.

\begin{lemma}\label{lem:runge}
Let  $W\subset \Ncal$ be a Runge domain of finite topology,  and let $S$ be a connected admissible compact set contained in $W$ and isotopic to $W.$ 
Let $F=(F_j)_{j=1,2,3}:S \to \c^3$ be a generalized null curve.

Then $F$ can be uniformly $\mathcal{C}^1$-approximated on $S$ by a sequence $\{H_n=(H_{j,n})_{j=1,2,3}\}_{n\in\N}$ in ${\sf N}(W).$ In addition, we can choose
$H_{3,n}=F_3$ for all $n\in\N$ provided that $F_3\in \mathcal{F}_0(W)$ and $dF_3$ never vanishes on $C_S.$
\end{lemma}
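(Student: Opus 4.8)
The plan is to reduce the statement to a standard Runge–Mergelyan-type approximation for the Weierstrass data, carried out component-wise but respecting the nullity constraint $\curvo{dF,dF}=0$. Write $\Phi=dF=(\Phi_1,\Phi_2,\Phi_3)$, a smooth $(1,0)$-form on $S$ with $\sum_j\Phi_j^2=0$ and $\sum_j|\Phi_j|^2$ nowhere zero, and recall from Remark~\ref{rem:nulidad} that any null $(1,0)$-form can be written in spinorial coordinates as $\Phi=\big(\tfrac12\varphi(1-g^2),\tfrac{i}{2}\varphi(1+g^2),\varphi g\big)$ for a meromorphic-type function $g$ and a $(1,0)$-form $\varphi$ on $S$, with the poles of $g$ cancelled by zeros of $\varphi$. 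First I would fix such a representation on $S$; the nowhere-vanishing of $\doble{dF,dF}$ guarantees $\varphi$ does not vanish identically and that $(g,\varphi)$ are genuinely defined on a neighborhood of $M_S$ and along $C_S$ in the smooth sense introduced before the lemma.

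Next I would approximate $g$ and $\varphi$ separately. Since $S$ is a connected admissible compact set, $\Ncal-S$ has no bounded components (by the remark following the definition of admissible set), so classical Runge–Mergelyan approximation applies on $S$: one approximates $g$ uniformly on $S$ (in $\mathcal{C}^1$ along the arcs $C_S$, which is legitimate because every arc of $C_S$ extends analytically into $M_S$) by functions $g_n$ holomorphic on $W$, and likewise approximates the $(1,0)$-form $\varphi$ by $\varphi_n\in\Omega_0(W)$, taking care that the zeros of $\varphi_n$ continue to absorb the poles of $g_n$ — this is arranged by multiplying $\varphi_n$ by a fixed rational factor built from the (finitely many) poles of $g$ and then perturbing. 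Setting $\Phi_n=\big(\tfrac12\varphi_n(1-g_n^2),\tfrac{i}{2}\varphi_n(1+g_n^2),\varphi_n g_n\big)$ automatically gives $\curvo{\Phi_n,\Phi_n}=0$ on $W$, and for $n$ large $\doble{\Phi_n,\Phi_n}$ is nowhere zero on $S$ by uniform closeness to $\Phi$; shrinking $W$ slightly around $S$ if necessary, it is nowhere zero on $W$ as well. The remaining issue is exactness: $\int_\gamma\Phi_n$ need not vanish for $\gamma\in\Hcal_1(W,\Z)=\Hcal_1(S,\Z)$. Here I would invoke the period-killing trick: since $\Theta$ is not contained in any finite union of hyperplanes (Remark~\ref{rem:nulidad}), the period map on the finite-dimensional space of admissible perturbations of $(g_n,\varphi_n)$ is a submersion at the relevant point, so an implicit-function/finite-dimensional argument lets one correct $\Phi_n$ to an exact null form $\widetilde\Phi_n$ with the same nullity and still $\mathcal{C}^1$-close to $\Phi$ on $S$. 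Then $H_n(P):=H_n(P_0)+\int_{P_0}^P\widetilde\Phi_n$ is a well-defined null curve in ${\sf N}(W)$, and choosing the integration constant to match $F$ at a base point yields $\mathcal{C}^1$-approximation on $S$.

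For the addendum, when $F_3\in\mathcal{F}_0(W)$ and $dF_3=\varphi g$ has no zeros on $C_S$, I would instead prescribe $H_{3,n}=F_3$ from the outset, i.e. keep $\varphi g$ fixed and equal to $dF_3$, and only approximate the free function $g$ (equivalently the ratio encoding the other two components). Concretely, with $h:=dF_3$ fixed, one writes $\Phi_1=\tfrac12(h/g - hg)$ and $\Phi_2=\tfrac{i}{2}(h/g+hg)$, so the only datum to approximate is $g$ itself; Runge approximation of $g$ on $S$ by $g_n$ holomorphic and zero-free where needed (possible since $g$ is zero-free and pole-free along $C_S$ by the hypothesis on $dF_3$) produces $\Phi_{1,n},\Phi_{2,n}$ close to $\Phi_1,\Phi_2$ with $\Phi_{3,n}\equiv h$ exactly, and the periods of $\Phi_{1,n},\Phi_{2,n}$ are killed by the same finite-dimensional deformation argument applied now only to the $g$-variable (the period of $\Phi_{3,n}=dF_3$ is automatically zero).

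The step I expect to be the main obstacle is the simultaneous control of \emph{three} constraints under approximation: preserving exactness (vanishing periods), preserving the algebraic nullity relation $\curvo{\Phi,\Phi}=0$, and preserving non-degeneracy $\doble{\Phi,\Phi}\neq 0$, while also maintaining $\mathcal{C}^1$-closeness along the arcs $C_S$. The nullity is handled structurally by the spinorial ansatz, and non-degeneracy is an open condition, so the real work is the period correction: one must check that the perturbations used to kill periods can be taken small in $\mathcal{C}^1(S)$ and compatible with the spinorial form, which is exactly where the genericity statement in Remark~\ref{rem:nulidad} (that $\Theta$ spans enough directions) is used to guarantee the relevant period differential is surjective.
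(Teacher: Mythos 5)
Your overall strategy---pass to the Weierstrass data $\Phi=dF$, write it spinorially, approximate by Runge--Mergelyan, then kill the periods by a finite-dimensional deformation---is the standard way such statements are established, but it is not what the paper does at this point: the paper's proof of Lemma \ref{lem:runge} is a two-line reduction, applying the Approximation Lemma of \cite{al} to $\Phi=dF$ to obtain \emph{exact} vectorial 1-forms $\Phi_n\in\Omega_0(W)^3$ (with $\curvo{\Phi_n,\Phi_n}=0$, nonvanishing metric, and the option of fixing the third component) converging to $dF$ on $S$, and then integrating from a point $P_0\in M_S$; well-definedness of $H_n$ on all of $W$ follows because $S$ is isotopic to $W$, so $\Hcal_1(S,\z)=\Hcal_1(W,\z)$. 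In effect you are re-proving the cited Approximation Lemma rather than invoking it.

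Judged as a self-contained argument, the proposal has a genuine gap at exactly the step you flag as the main obstacle. The claim that the period map on a space of ``admissible perturbations'' of $(g_n,\varphi_n)$ is a submersion ``because $\Theta$ is not contained in finitely many hyperplanes'' (Remark \ref{rem:nulidad}) is an assertion, not a proof: one must construct an explicit finite-dimensional family of deformations that (i) respects the nullity constraint, (ii) is localized so that the periods along a homology basis of $S$ can be moved essentially independently while staying $\mathcal{C}^1$-small on $S$, and (iii) in the addendum case additionally fixes $\varphi g=dF_3$, which drastically restricts the admissible perturbations; verifying maximal rank of this constrained period map is precisely the technical content of the Approximation Lemma in \cite{al}. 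Two further points are glossed over. First, membership in ${\sf N}(W)$ requires $\doble{\Phi_n,\Phi_n}\neq 0$ (equivalently, the zeros of $\varphi_n$ must be exactly balanced against the poles of $g_n$) on \emph{all} of $W$, not merely near $S$; ``shrinking $W$ slightly'' is not allowed by the statement of the lemma, so the zero/pole bookkeeping on $W-S$ must be carried out. Second, the uniform $\mathcal{C}^1$-approximation along the arcs of $C_S$ needs a Mergelyan-type theorem for admissible sets with smooth jet data on analytic arcs, which goes beyond the classical Runge theorem you cite. Either supply these arguments or, as the paper does, quote the Approximation Lemma of \cite{al} directly.
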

\begin{proof} Use the Approximation Lemma in \cite{al} for $\Phi=dF$ to get a sequence of exact vectorial 1-forms $\{\Phi_n\}_{n\in\n}\subset \Omega_0(W)^3$ converging to $dF$ uniformly on $S.$ Since  $S$ is isotopic to $W,$ $H_n:=F(P_0)+\int_{P_0}\Phi_n$ is well defined on $W$  for all $n\in\n,$ where $P_0$ is any point in $M_S.$  $\{H_n\}_{n \in \n}$ solves the lemma.
\end{proof}

\subsection{Convex domains}

Throughout this section, $\mathcal{D}$ will denote a regular convex domain of $\r^n,$ $\Dcal \neq \r^n,$  $n\geq 2.$ 

Recall that $\Dcal \cap (p+T_p \, \partial \Dcal)=\emptyset$ for all $p \in \partial \Dcal,$ where $T_p \, \partial \Dcal$ denotes the real tangent space of $\partial \Dcal$ at $p.$  Therefore $\overline{\Dcal}=\cap_{p \in \partial \Dcal} H_p,$ where $H_p$ is the closed half space bounded by $p+T_p \, \partial \Dcal$ and containing  $\Dcal,$ $p \in \partial \Dcal.$ 

Let $\nu_\Dcal: \partial \mathcal{D} \rightarrow \mathbb{S}^{n-1}$ be the
outward pointing unit normal of $\partial \mathcal{D}.$ 
Given $p\in\partial \mathcal{D}$ and $v \in T_p\, \partial \Dcal\cap \,\s^{n-1},$  we denote by  $\kappa_\Dcal (p,v)$ the  normal curvature at $p$ in the direction of $v$   with respect to $-\nu_\Dcal,$ obviously non-negative. In particular, the principal curvatures of $\partial \mathcal{D}$ at $p$ with respect to $-\nu_\Dcal$ are non-negative.  We denote by $\kappa(p)\geq 0$
the maximum of these principal curvatures at $p \in \partial \Dcal,$ and by
\[
\kappa (\mathcal{D}):= \sup \{\kappa(p)\;|\;p\in\partial \mathcal{D}\} \in [0,+\infty].
\]
If $p \in \partial \Dcal,$ $v \in \s^{n-1} \cap T_p \, \partial \Dcal$ and  $\kappa_\Dcal (p,v)>0,$ basic convex geometry gives that  
$\lim_{\lambda \to \infty} {\rm dist}(p+\lambda v,\overline{\Dcal})=+\infty.$ The  domain $\mathcal{D}$ is said to be {\em strictly convex} if $\kappa_\Dcal (p,v)>0$ for all $p \in \partial \Dcal$ and $v \in \s^{n-1} \cap T_p \, \partial \Dcal.$

\begin{definition} \label{def:dt} 
For any $t \in (-\frac1{\kappa(\Dcal)}, +\infty)$ we denote by $\Dcal_t$ the convex domain in $\r^n$ bounded by $\partial \Dcal_t=\{p+ t\cdot \nu_\Dcal(p) \, | \, p \in \partial
\mathcal{D}\}$ and such that  $\Dcal\subset \Dcal_t$ if $t\geq 0,$ and $\Dcal_t\subset \Dcal$ if $t\leq 0.$
\end{definition}
We have made the conventions $-\frac1{\kappa(\Dcal)}=-\infty$ and $-\frac1{\kappa(\Dcal)}=0$ provided that $\kappa(\Dcal)=0$ and $\kappa(\Dcal)=+\infty,$ respectively. We label $\Dcal_{-1/\kappa(\Dcal)}$ as the closed subset $\cap_{t>-1/\kappa(\Dcal)} \Dcal_t.$ Note that $\partial \Dcal_t$ is a regular (convex) hypersurface $\forall t \in (-\frac1{\kappa(\Dcal)}, +\infty).$ 

Set $\pi_\mathcal{D}:\r^n-\mathcal{D}_{-1/\kappa(\Dcal)}\to \partial \mathcal{D}$ as the normal projection given by $\pi_\mathcal{D}(p+t\nu_\Dcal(p))=p,$ and keep denoting by  $\nu_\Dcal$ the {\em extended normal map} $\nu_\Dcal\circ \pi_\Dcal:\r^n-\mathcal{D}_{-1/\kappa(\Dcal)} \to \s^{n-1}.$

\begin{definition}\label{def:escapingvector}
A vector $v \in \r^n-\{\vec{0}\}$ is said to be a {\em escaping vector} in $\Dcal$ if $\overline{\Dcal}$ contains no half lines parallel to $v,$ or equivalently, if $\lim_{\r \ni \lambda \to \infty} \dist(p+\lambda v,\overline{\Dcal})=+\infty$ for all $p \in \r^n.$ In this case 
\[
\liminf_{\r \ni \lambda \to \infty} \frac{1}{|\lambda|}{\rm dist}(p+\lambda v,\overline{\Dcal})>0\quad \text{ for all $p \in \r^n.$}
\]
We denote by $\mathcal{E}_\Dcal$ the set of a escaping vectors in $\Dcal.$
\end{definition}
The set $\mathcal{E}_\Dcal$ is  empty if and only if  $\Dcal$ contains a half space, and otherwise it is the complement in $\r^n$ of a double cone with vertex $\vec{0}$ and convex base.  If $p \in \partial \Dcal$ and $v \in T_p \, \partial \Dcal -\mathcal{E}_\Dcal$ then $\partial \Dcal\cap T_p \, \partial \Dcal$ contains a half line parallel to $v$ and with initial point $p,$ whereas  $v \in \mathcal{E}_\Dcal\cap T_p \, \partial \Dcal$ implies that  $\partial \Dcal\cap \{p+\lambda v\,|\,  \lambda \in \r\}$ is a compact segment containing $p.$ If $\Dcal$ is strictly convex then  $T_p \,\partial \Dcal-\{\vec{0}\}\subset \mathcal{E}_\Dcal,$  $\partial \Dcal\cap (p+T_p\, \partial \Dcal)=\{p\}$ and 
$\lim_{T_p \, \partial \Dcal \ni v \to \infty}  \dist(p+ v,\overline{\Dcal})=+\infty$ for all $p \in \partial \Dcal.$

Assume that $\kappa(\Dcal)<+\infty$ and take $r \in (0,1/\kappa(\Dcal))$ and $p \in \Dcal-\overline{\Dcal_{-r}}.$ Consider $\delta\in (0,r)$ and a neighborhood $U_p$ of $p$  so that $U_p \subset  \Dcal-\overline{\Dcal_{-r+\delta}}$ and ${\rm diam}(U_p)<\delta.$  Then it is straightforward to check that 
\begin{equation} \label{eq:dista} 
\text{$\overline{\Dcal_{-r}} \cap (q_1+T_{\pi_\Dcal (q_2)} \, \partial \Dcal)=\emptyset$ for all $q_1,$ $q_2 \in U_p.$}
\end{equation}

%%%%%%%%%%%%%%%%%%%%%%%%%%%%%%%%%%%%%%%%%%%

\begin{remark} \label{re:pita}
Let $\Dcal$ and $\hat{\Dcal}$ be two regular convex domains in $\r^n$ with $\kappa(\Dcal)<+\infty$ and  $\overline{\Dcal}\subset \hat{\Dcal}.$ Consider $p\in\partial \Dcal,$ and take $r\in [0,1/\kappa(\Dcal))$ and $q\in\partial\Dcal_{-r}$ with $\pi_\Dcal(q)=p.$ By convexity and basic trigonometry, one has that ${\rm dist}(q,(q+T_q \partial \mathcal{D}_{-r})\cap\partial \hat{\Dcal})\geq\sqrt{d_r^2+2\frac{d_r}{\kappa(\mathcal{D}_{-r})}},$ where $d_r={\rm dist}(\Dcal_{-r},\partial \hat{\Dcal}).$ 
Since $T_q \partial \mathcal{D}_{-r}=T_p \partial \mathcal{D},$ $d_r\geq d_0+r$ and  $\kappa(\mathcal{D}_{-r})=\frac{\kappa(\mathcal{D})}{1-r\kappa(\mathcal{D})},$ we infer that
$${\rm dist}(q,(q+T_p \partial \mathcal{D})\cap\partial \hat{\Dcal})\geq  \sqrt{d_0^2+2\frac{d_0}{\kappa(\mathcal{D})}}.$$
\end{remark}

Given two compact subsets $C,D\subset\r^n$,
the Hausdorff distance between $C$ and $D$ is defined by
\[
\delta^H(C,D)=\max\left\{ \sup_{x\in C} \inf_{y\in D} \|x-y\|\;,\;
\sup_{y\in D} \inf_{x\in C} \|x-y\|\right\}.
\]
A sequence $\{K_n\}_{n \in \n}$ of (possibly unbounded) closed subsets of $\r^n$ is said to be convergent in the Hausdorff topology  to a closed subset $K_0$ of $\r^n$ if 
$\{K_n\cap B\}_{n \in \n}\to K_0 \cap B$ in the Hausdorff distance for any closed ball $B \subset \r^n.$ If $K_j\subset K_{j+1}^\circ \subset K_0$ $\forall j\in\n$ and $\{K_j\}_{j \in \n} \to K_0$ in the Hausdorff topology, we simply write $\{K_j\}_{j \in \n} \nearrow K_0.$ Likewise we put $\{K_j\}_{j \in \n} \searrow K_0$ provided that $ K_0\subset K_{j+1}\subset K_j^\circ$ $\forall j\in\n$ and $\{K_j\}_{j \in \n} \to K_0$ in the Hausdorff topology.

The following theorem follows from classical Minkowski's Theorem \cite{min} (see also \cite{meeksyau}):

\begin{theorem}\label{th:mink}
Let $C$ be a (possibly neither bounded nor regular) convex domain of $\r^n.$ 

Then there exists a sequence $\{C_k\}_{k\in\n}$ of   bounded strictly convex analytic domains in $\r^n$ with $\{\overline{C_k}\}_{k \in \n} \nearrow \overline{C}.$ 

If in addition $C$ is bounded, then there exists a sequence $\{D_k\}_{k\in\n}$ of   bounded strictly convex analytic domains in $\r^n$ with $\{\overline{D_k}\}_{k \in \n} \searrow \overline{C}.$ \end{theorem}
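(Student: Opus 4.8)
The plan is to reduce the statement to an elementary construction of strictly nested convex exhaustions, combined with a single application of Minkowski's approximation theorem \cite{min} (cf.\ \cite{meeksyau}): \emph{every convex body of $\r^n$ is a limit, in the Hausdorff distance and both from inside and from outside, of bounded strictly convex analytic bodies.} Writing $B_r$ (resp.\ $\overline{B_r}$) for the open (resp.\ closed) Euclidean ball of radius $r$ about the origin, the first thing I would extract from this is the \emph{sandwiched} form actually needed: if $K\subset (K')^\circ$ are compact convex subsets of $\r^n$ with nonempty interior, then there is a bounded strictly convex analytic domain $G$ with $K\subset G\subset\overline G\subset (K')^\circ$. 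Indeed, for $\eta>0$ small the body $L:=K+\eta\,\overline{B_1}$ satisfies $K\subset L^\circ$ and $L\subset (K')^\circ$; approximating $L$ from outside by an analytic strictly convex body $T$ close enough that still $T\subset (K')^\circ$, the domain $G:=T^\circ$ works, since $K\subset L^\circ\subset T^\circ=G$ and $\overline G=T\subset (K')^\circ$.

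For the first assertion I would produce a sequence $\{K_k\}_{k\in\n}$ of compact convex bodies with $K_k\subset K_{k+1}^\circ\subset\overline C$ and $\{K_k\}_{k\in\n}\to\overline C$ in the Hausdorff topology, and then smooth. If $C=\r^n$ take $K_k:=\overline{B_k}$. Otherwise, after a translation assume $\vec{0}\in C$; by the standard fact that the segment joining an interior point of a convex set to a point of its closure lies, except for the closure endpoint, in the interior, one has $\mu\,\overline C\subset C$ for every $\mu\in(0,1)$. Set $K_k:=\mu_k\overline C\cap\overline{B_{R_k}}$ for sequences $\mu_k\nearrow 1$, $R_k\nearrow+\infty$. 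Using $(A\cap B)^\circ=A^\circ\cap B^\circ$ and $(\overline C)^\circ=C$, a direct check gives $K_k\subset K_{k+1}^\circ$, $K_k\subset C\subset\overline C$, $\bigcup_k K_k=C$, and $K_k\cap\overline{B_S}=\mu_k\overline C\cap\overline{B_S}\to\overline C\cap\overline{B_S}$ in the Hausdorff distance for every $S>0$ (because $\mu_k\overline C\subset\overline C$ and $\delta^H(\mu_k\overline C\cap\overline{B_S},\overline C\cap\overline{B_S})\le(1-\mu_k)S$); hence $\{K_k\}\to\overline C$. Now I build the $C_k$ by induction: let $C_1$ be a bounded strictly convex analytic domain with $K_1\subset C_1\subset\overline{C_1}\subset K_2^\circ$ (sandwiched form with $K=K_1$, $K'=K_2$), and, given $C_k$, apply the sandwiched form with $K=\overline{C_k}$, $K'=K_{k+2}$ (legitimate since $\overline{C_k}\subset K_{k+1}^\circ\subset K_{k+2}^\circ$) to get $C_{k+1}$ with $\overline{C_k}\subset C_{k+1}\subset\overline{C_{k+1}}\subset K_{k+2}^\circ$. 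Since $C_{k+1}$ is open, $\overline{C_k}\subset C_{k+1}\subseteq(\overline{C_{k+1}})^\circ\subset\overline{C_{k+1}}\subset\overline C$; and from $K_k\subset\overline{C_k}\subset\overline C$ together with $\{K_k\}\to\overline C$ the squeeze yields $\{\overline{C_k}\}\to\overline C$. Thus $\{\overline{C_k}\}\nearrow\overline C$.

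For the second assertion $C$ is bounded, so $\overline C$ is itself a convex body and no exhaustion is needed; I would build the $D_k$ by descending induction. Let $D_1$ be a bounded strictly convex analytic domain with $\overline C\subset D_1$ and $\delta^H(\overline{D_1},\overline C)<1$. Given $D_k$, choose $\eta\in(0,\frac1{k+1})$ with $\overline C+\eta\,\overline{B_1}\subset D_k$ and apply the sandwiched form with $K=\overline C$, $K'=\overline C+\eta\,\overline{B_1}$ to get $D_{k+1}$ with $\overline C\subset D_{k+1}\subset\overline{D_{k+1}}\subset D_k$ and $\delta^H(\overline{D_{k+1}},\overline C)\le\eta<\frac1{k+1}$. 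Since each $D_k$ is open, $\overline C\subset\overline{D_{k+1}}\subset D_k\subseteq(\overline{D_k})^\circ$ and $\delta^H(\overline{D_k},\overline C)\to0$, i.e.\ $\{\overline{D_k}\}\searrow\overline C$.

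The only non-elementary ingredient is Minkowski's theorem, already packaged in the sandwiched form; everything else is routine manipulation of convex sets, their interiors, and the Hausdorff distance. The step to carry out with care is performing each smoothing with enough slack to preserve the strict nesting demanded by the symbols $\nearrow$ and $\searrow$ — which is precisely why the sandwiched form, rather than plain Hausdorff approximation, is invoked at every stage.
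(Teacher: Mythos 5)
Your argument is correct and follows exactly the route the paper intends: the paper offers no proof of this theorem beyond citing Minkowski's approximation result, and your "sandwiched form" plus the scaled-and-truncated exhaustion $\mu_k\overline C\cap\overline{B_{R_k}}$ (resp. the descending outer approximation) supplies precisely the routine reduction the authors leave implicit. The nesting, the Hausdorff estimates, and both inductions check out, so there is nothing further to add.
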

Recall that a  convex domain $\mathcal{D}$ is said to be analytic if $\partial \mathcal{D}$ is an analytical hypersurface of $\r^n.$ 

\begin{definition} \label{de:proper}
Let $C$ be a (possibly neither bounded nor regular) convex domain  in $\r^n.$ A sequence $\{C_k\}_{k \in \n}$ of   convex domains in $\r^n$ is said to 
be {\em proper in $C$} if $C_k$ is bounded regular strictly  convex for all $k,$  $\{\overline{C_k}\}_{k \in \n} \nearrow \overline{C}$ in the Hausdorff topology and $\sum_{k \in \n} \sqrt{\frac{{\rm dist}(C_k,\partial C_{k+1})}{\kappa(C_k)}}=+\infty.$
\end{definition}
\begin{lemma} \label{lem:sucprop}
Any convex domain in $\r^n$ admits a proper sequence of convex domains.
\end{lemma}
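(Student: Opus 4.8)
The plan is to start from the sequences furnished by Theorem~\ref{th:mink} and to fix up the one defect of those sequences: they need not satisfy the divergence condition $\sum_k \sqrt{\dist(C_k,\partial C_{k+1})/\kappa(C_k)}=+\infty$ appearing in Definition~\ref{de:proper}. So first I would invoke Theorem~\ref{th:mink} to get a sequence $\{E_k\}_{k\in\n}$ of bounded strictly convex analytic domains with $\{\overline{E_k}\}\nearrow\overline{C}$; in particular each $E_k$ is regular and $\kappa(E_k)<+\infty$. The desired proper sequence will be obtained as a suitable \emph{subsequence-with-insertions} of $\{E_k\}$, that is, I will choose a strictly increasing sequence of indices $k_1<k_2<\cdots$ and between consecutive chosen domains $E_{k_j}$ and $E_{k_{j+1}}$ interpolate finitely many of the intermediate $E_m$'s; the point of the interpolation is to make each single step $\dist(C_\ell,\partial C_{\ell+1})$ small while keeping $\kappa(C_\ell)$ controlled, so that the square roots add up to a divergent series.

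The key mechanism is the following observation. Fix $j$ and consider the two strictly convex analytic domains $E_{k_j}\subset E_{k_{j+1}}$. Write $\delta_j:=\dist(E_{k_j},\partial E_{k_{j+1}})>0$ and $\kappa_j:=\max\{\kappa(E_m)\,|\,k_j\le m\le k_{j+1}\}<+\infty$. If I subdivide the ``gap'' by choosing a large integer $N_j$ and picking intermediate domains $E_{m}$ (for $k_j=m_0<m_1<\dots<m_{N_j}=k_{j+1}$) with $\dist(E_{m_{i}},\partial E_{m_{i+1}})\le \delta_j/N_j$ for each $i$ — which is possible because $\{\overline{E_m}\}\nearrow\overline{C}$ forces $\dist(E_m,\partial E_{m+1})\to 0$ within the relevant range, or, failing a direct estimate, by first passing to a sparser subsequence and then reintroducing enough terms — then the contribution of this block to the sum is
\[
\sum_{i=0}^{N_j-1}\sqrt{\frac{\dist(E_{m_i},\partial E_{m_{i+1}})}{\kappa(E_{m_i})}}
\ \ge\ N_j\cdot\sqrt{\frac{\delta_j/N_j}{\kappa_j}}
\ =\ \sqrt{\frac{N_j\,\delta_j}{\kappa_j}}.
\]
Since $\delta_j$ and $\kappa_j$ are fixed positive finite numbers once the block endpoints are chosen, taking $N_j$ large enough makes this block-contribution $\ge 1$. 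Doing this for every $j$ yields a sequence $\{C_\ell\}_{\ell\in\n}$ (the concatenation of all the blocks) which is a sequence of bounded regular strictly convex analytic domains with $\{\overline{C_\ell}\}\nearrow\overline{C}$ and $\sum_\ell\sqrt{\dist(C_\ell,\partial C_{\ell+1})/\kappa(C_\ell)}=+\infty$, as required.

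The main obstacle is the subdivision step: one must be sure that, given $E_{k_j}\subset E_{k_{j+1}}$, one can actually interpolate finitely many bounded regular strictly convex analytic domains $E_{k_j}=F_0\subset F_1\subset\cdots\subset F_{N_j}=E_{k_{j+1}}$ with $\dist(F_i,\partial F_{i+1})$ arbitrarily small and $\kappa(F_i)$ uniformly bounded on the block. This does not follow verbatim from Theorem~\ref{th:mink} applied to $C$, so I would instead apply Theorem~\ref{th:mink} to the convex domain $E_{k_{j+1}}$ itself from inside: it provides a sequence of bounded strictly convex analytic domains increasing to $\overline{E_{k_{j+1}}}$, and discarding the ones not containing $E_{k_j}$ gives arbitrarily fine interpolating layers between $E_{k_j}$ and $E_{k_{j+1}}$; their maximal principal curvatures are bounded because they all lie in the fixed compact region $\overline{E_{k_{j+1}}}\setminus E_{k_j}$ and converge (in the Hausdorff sense, together with their tangent planes) to the analytic boundary $\partial E_{k_{j+1}}$, whose curvature is finite. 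Combining a diagonal selection of such layers over all $j$ with the counting estimate above produces the proper sequence and completes the proof.
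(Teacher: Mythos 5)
Your overall strategy is the same as the paper's (refine the exhaustion supplied by Theorem \ref{th:mink} block by block so that each annular block contributes at least $1$ to the series in Definition \ref{de:proper}), but the central estimate of your proposal is not justified, and the mechanism you offer to produce the interpolating layers cannot justify it. From the hypothesis $\dist(E_{m_i},\partial E_{m_{i+1}})\le \delta_j/N_j$ you cannot conclude $\sum_{i}\sqrt{\dist(E_{m_i},\partial E_{m_{i+1}})/\kappa(E_{m_i})}\ \ge\ N_j\sqrt{(\delta_j/N_j)/\kappa_j}$: a lower bound for the block sum requires a \emph{lower} bound on each individual gap, while your construction only provides upper bounds. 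Note moreover that for nested convex domains the successive gaps are superadditive (a segment from $\overline{F_i}$ to $\partial F_{i+2}$ must cross $\partial F_{i+1}$), so $\sum_i \dist(F_i,\partial F_{i+1})\le \dist(E_{k_j},\partial E_{k_{j+1}})=\delta_j$; the layers obtained by applying Theorem \ref{th:mink} inside $E_{k_{j+1}}$ are only known to converge to $\overline{E_{k_{j+1}}}$ in the Hausdorff sense, so their gaps could perfectly well decay geometrically, in which case the block contributes only $O(\sqrt{\delta_j/\kappa_j})$ and not $\ge 1$. The curvature claim has the same defect: Hausdorff convergence of convex bodies gives no second-order control, and Theorem \ref{th:mink} says nothing about tangent planes or curvatures of the approximating boundaries; since the number of layers you use in a block grows with $N_j$, the constant $\kappa_j$ in your estimate is not fixed, and the two unproved quantitative controls (gap lower bounds and curvature upper bounds) are exactly what the divergence needs.

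The missing device --- and what the paper actually uses --- is to interpolate with the \emph{outer parallel domains} of the inner domain itself, $C_{a,j}=(C_j)_{d_{a,j}}$ for suitable offsets $0<d_{a,j}<d_j={\rm dist}(C_j,\partial C_{j+1})$. These are automatically bounded, analytic and strictly convex; the successive gaps are exactly the offset increments, so they can be prescribed at will (the paper takes them proportional to $1/(a+1)^2$, so that the square roots behave like a harmonic series and finitely many layers already give a block contribution $\ge 1$); and the curvatures are explicitly controlled by $\kappa\bigl((C_j)_t\bigr)=\kappa(C_j)/(1+t\,\kappa(C_j))\le\kappa(C_j).$ If you replace your Minkowski-from-inside layers by these parallel domains, your counting argument closes and essentially reproduces the paper's proof; as it stands, the proposal has a genuine gap at its key inequality.
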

\begin{proof}
Let $C$ be a convex domain  in $\r^n.$ By Theorem \ref{th:mink}, there exists a sequence $\{C_j\}_{j\in\n}$ of   bounded strictly convex analytic domains in $\r^n$ with $\{\overline{C_j}\}_{j \in \n} \nearrow \overline{C}.$ For the sake of simplicity write $d_j={\rm dist}(C_j,\partial C_{j+1})$ and $\kappa_j=\kappa(C_j)$ for all $j.$

Recall that $\frac{6}{\pi^2}\sum_{a \in \n} \frac1{a^2}=1,$ and for each $j \in \n$ fix  $m_j\in \n$ large enough so that $\frac{\sqrt{6}}{\pi} \sqrt{\frac{d_j}{\kappa_j}} \sum_{a=1}^{m_j}  \frac1{a}\geq 1.$ 
Call $d_{a,j}=d_j\frac{6}{\pi^2}\sum_{h=1}^a \frac1{h^2},$ set $C_{a,j}:=(C_j)_{d_{a,j}}$ (see Definition \ref{def:dt}),  $a=1,\ldots,m_j,$ and make the convention $C_{0,j}=C_j.$   
It is clear that $d_{a,j}<d_j,$   $C_{a,j}$ is analytical and strictly convex, $\overline{C_{a,j}} \subset C_{a+1,j} \subset \overline{C_{a+1,j}} \subset C_{j+1},$   and  ${\rm dist}(C_{a,j},\partial C_{a+1,j})=\frac{6d_j}{\pi^2 (a+1)^2}$ for all $a=0,\ldots,m_j-1.$ Furthermore, since $\kappa(C_{a,j})=\frac{\kappa_j}{1+d_{a,j} \kappa_j}\leq \kappa_j$ for all $a=0,\ldots,m_j-1,$ $$\sum_{a=0}^{m_j-1} \sqrt{\frac{{\rm dist}(C_{a,j},\partial C_{a+1,j})}{\kappa(C_{a,j})}} \geq  \frac{\sqrt{6}}{\pi} \sqrt{\frac{d_j}{\kappa_j}}\sum_{a=1}^{m_j} \frac1{a}\geq 1.$$

Let $\{D_k\}_{k \in \n}$ denote the enumeration of $\cup_{j \in \n} \{C_{a,j}\,|\, a=0,\ldots,m_j\}$ so that $\overline{D_{k}} \subset D_{k+1}$ for all $k.$  Since 
$$\sum_{k \in \n} \sqrt{\frac{{\rm dist}(D_k,\partial D_{k+1})}{\kappa(D_k)}}\geq \sum_{j \in \n} \left( \sum_{a=0}^{m_j-1} \sqrt{\frac{{\rm dist}(C_{a,j},\partial C_{a+1,j})}{\kappa(C_{a,j})}}\right)=+\infty,$$ $\{D_k\}_{k \in \n}$ is proper in $C$ and we are done.
\end{proof}

\subsubsection{$\curvo{\,,\,}$ and convex domains in $\c^3$}

Given $C\subset \c^3,$ we denote by ${\rm span}_\r(C)=\{\sum_{j=1}^n r_j v_j\,|\, r_j\in \r,\, v_j \in C,\, n \in \n\}$ and ${\rm span}_\c(C)={\rm span}_\r(C)+J({\rm span}_\r(C)),$  where $J:\c^3 \to \c^3,$ $J(v)=\imath v,$  is the usual complex structure. If $V\subset \c^3$ is a {\em real} subspace, the complex subspace $V_\c:=V \cap J(V)$ is said to be the {\em complex kernel} of $V.$

A real or complex vectorial hyperplane $V \subset \c^3$ is said to be {\em $\curvo{ \,,\, }$-degenerate} if $\curvo{ \,,\, }|_{V_\c \times V_\c}$ is a degenerate complex bilinear 1-form, that is to say, if  $V_\c=\curvo{ u}^\bot$ for some null vector $u.$ 
If $H = \esca{\nu}^\bot,$ $\nu \in\c^3-\{\vec{0}\},$  then $H_\c=\curvo{ \overline{\nu}}^\bot$ and $H$ is  $\curvo{ \,,\, }$-degenerate if and only if $\nu$ is null. If $\nu$ is not null, there exists a $\curvo{ \,,\,}$-conjugate basis $\{u_1,u_2,u_3\}$ of $\c^3$ so that $u_3=\overline{\nu}$ and ${\rm span}_\c(\{u_1,u_2\})=H_\c.$ 

\begin{definition} \label{de:regular}
A domain $\Omega \subset \c^3$ with non-empty $\partial \Omega$ is said to be $\curvo{ \,,\, }${\em -regular} if it is regular (i.e., with smooth $\partial \Omega$) and the real tangent space $T_p \, \partial \Omega$ is  not $\curvo{ \,,\, }$-degenerate for almost every $p \in \partial \Omega.$
\end{definition}

Given a regular convex domain $\Dcal \subset \c^3$ and a point $p \in \c^3 -\Dcal_{-1/\kappa(\Dcal)},$ we denote by 
\[
\Theta_\Dcal(p)=\;\doble{ \nu_\Dcal(p) }^\bot \cap \Theta\;\subset T_{\pi_\Dcal(p)} \, \partial \Dcal.
\]
\begin{definition}\label{def:wsc} 
A regular convex domain $\Dcal \subset \c^3 \equiv \r^6$ is said to be {\em null strictly convex} if $\mathcal{E}_\Dcal \cap \Theta_\Dcal(p) \neq \emptyset$ for all $p \in \partial \Dcal$ (see Definition \ref{def:escapingvector}).     
\end{definition}
This occurs, for instance, if for any $p \in \partial \Dcal$ there is $v \in \Theta_\Dcal(p) \cap \s^{5}$ such that  $\kappa_\Dcal(p,v)>0.$

\begin{claim}  \label{as:nsc}
The $\curvo{ \,,\, }$-regularity of domains and the null strictly convexity of regular convex domains of $\c^3$ are preserved by complex orthogonal transformations. 
\end{claim} 
\begin{proof}
From \eqref{eq:ATheta}, $A(\Theta)=\Theta$ and $A(\esca{\nu}^\bot)=\esca{\overline{A} \nu}^\bot$ for any $A \in \mathcal{O}(3,\c)$ and $\nu \in \c^3-\{\vec{0}\}.$ This shows that the  $\curvo{\,,\,}$-degeneracy of real (or complex) hyperplanes is preserved by complex orthogonal transformations and guarantees the assertion on the $\curvo{\,,\,}$-regularity.

For the null strictly convexity observe that
if  $A \in \mathcal{O}(3,\c),$  $p \in \partial \Dcal$  and $v \in \Theta_\Dcal(p)\cap \mathcal{E}_\Dcal,$  then $A(v) \in  \Theta_{A(\Dcal)} (A(p))\cap \mathcal{E}_{A(\Dcal)}.$ Indeed, it is clear that $A(p) \in \partial A(\Dcal)$ and $\|\overline{A} (\nu_\Dcal(p))\|\nu_{A(\Dcal)}(A(p))=\pm \overline{A} (\nu_\Dcal(p)).$ Therefore, 
\begin{equation}\label{eq:ortoe}
\doble{ \nu_{A(\Dcal)}(A(p)) }^\bot=A(\doble{ \nu_\Dcal (p)}^\bot)
\end{equation}
 and  $A(v) \in \Theta_{A(\Dcal)}(A(p)).$ Finally, since $\dist(p,q) \leq \|A^{-1}\|\dist(A(p),A(q))$ for all $p,$ $q \in \c^3,$ then $A(v) \in \mathcal{E}_{A(\Dcal)}$ as well and we are done. 
\end{proof} 

To finish this subsection, let us discuss the relationship between convex domains as those in the Main Theorem of this paper, and the concepts of $\curvo{ \,,\, }$-regularity and null strictly convexity. See Definition \ref{def:wide} for a well understanding of the following.

First notice that a strictly increasing sequence  $\rho=\{\rho_i\}_{1\leq i\leq n} \subseteq \{1,\ldots,6\}$  is wide if and only if $\dim_\c({\rm span}_\c(\r^\rho))\geq 2.$ 

\begin{proposition} \label{pro:dense}
Let $\rho$ be a wide sequence in $\{1,\ldots,6\},$ and let  $\Omega\subset \r^\rho$ be a regular strictly convex domain with $\kappa(\Omega)<+\infty.$  
Then $\mathcal{C}_\rho(\Omega)$ is $\curvo{ \,,\, }$-regular, null strictly convex and $\kappa(\mathcal{C}_\rho(\Omega))<+\infty.$ 
\end{proposition}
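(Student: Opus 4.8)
The strategy is to verify the three conclusions — $\curvo{\,,\,}$-regularity, null strict convexity, and $\kappa(\Ccal_\rho(\Omega))<+\infty$ — one at a time, transferring geometric data from $\partial\Omega\subset\r^\rho$ to $\partial\Ccal_\rho(\Omega)\subset\r^6$ through the product structure $\Ccal_\rho(\Omega)=\Omega\times\r^{\rho^*}$. First I would record the basic description of the boundary: a point $p\in\partial\Ccal_\rho(\Omega)$ has the form $p=(q,w)$ with $q\in\partial\Omega$, $w\in\r^{\rho^*}$, the outward normal is $\nu_{\Ccal_\rho(\Omega)}(p)=(\nu_\Omega(q),\vec 0)$, and the tangent hyperplane is $T_p\,\partial\Ccal_\rho(\Omega)=T_q\,\partial\Omega\oplus\r^{\rho^*}$. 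The principal curvatures of $\partial\Ccal_\rho(\Omega)$ at $p$ are exactly those of $\partial\Omega$ at $q$ together with $6-n-1$ zeros coming from the flat $\r^{\rho^*}$ directions; hence $\kappa(\Ccal_\rho(\Omega))=\kappa(\Omega)<+\infty$, which disposes of the third assertion. It also shows $\Ccal_\rho(\Omega)$ is regular and convex (as a product of such).

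For null strict convexity, fix $p=(q,w)\in\partial\Ccal_\rho(\Omega)$; I must produce $v\in\mathcal{E}_{\Ccal_\rho(\Omega)}\cap\Theta_{\Ccal_\rho(\Omega)}(p)$, i.e. a null vector $v$ orthogonal (Hermitian, equivalently Euclidean since $v$ must satisfy $\esca{\nu,v}=0$ — note $\doble{\nu}^\bot\subset\esca{\nu}^\bot$) to $\nu:=\nu_{\Ccal_\rho(\Omega)}(p)=(\nu_\Omega(q),\vec 0)$ and escaping in $\Ccal_\rho(\Omega)$. Here is where I use that $\rho$ is wide: by the remark just above the proposition, wideness is equivalent to $\dim_\c(\operatorname{span}_\c(\r^\rho))\geq 2$, and in fact more is true — one checks that for a wide $\rho$ the hyperplane $\esca{\overline\nu}^\bot$ cannot be $\curvo{\,,\,}$-degenerate for $\nu$ of the special form $(\nu_\Omega(q),\vec 0)$ with $\nu_\Omega(q)\in\r^\rho$ a real unit vector; equivalently, using Remark \ref{rem:nulidad}, the null cone $\Theta$ is not contained in any hyperplane and its slice $\Theta\cap\esca{\nu}^\bot$ is a nonempty (in fact high-dimensional) conical set. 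The key point is then that among these null vectors orthogonal to $\nu$ one can be chosen escaping in $\Ccal_\rho(\Omega)$: since $\mathcal{E}_{\Ccal_\rho(\Omega)}$ is (as recorded in the Convex Domains subsection) the complement of a double cone over a compact convex subset of $\s^5$ when $\Omega\neq\r^\rho$ — its "non-escaping" directions being precisely those with zero $\r^\rho$-component, i.e. the subspace $\r^{\rho^*}$ — it suffices to find $v\in\Theta\cap\esca{\nu}^\bot$ with nonzero projection $\Pi_\rho(v)$. This is possible exactly because $\Theta\cap\esca{\nu}^\bot$ is not contained in the complex line/real subspace $\r^{\rho^*}$; and that non-containment is again a consequence of wideness (if $\rho=\{2j-1,2j\}$ the slice could collapse, which is why those $\rho$ are excluded). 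I would write this out via the explicit parametrization $v=(\tfrac12 z(1-u^2),\tfrac{i}{2}z(1+u^2),zu)$ and the two linear conditions $\esca{\nu,v}=0$, $\Pi_\rho(v)\neq 0$, noting there is always a free parameter left.

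For $\curvo{\,,\,}$-regularity I need $T_p\,\partial\Ccal_\rho(\Omega)$ to be non-$\curvo{\,,\,}$-degenerate for almost every $p$; recall a hyperplane $H$ is $\curvo{\,,\,}$-degenerate iff its complex kernel $H_\c$ equals $\curvo{u}^\bot$ for some null $u$, i.e. iff $H=\esca{\nu}^\bot$ with $\nu$ null. So I must show that $\nu_{\Ccal_\rho(\Omega)}(p)=(\nu_\Omega(q),\vec 0)$ is null for at most a measure-zero set of $p$. Writing $\nu_\Omega(q)=(a_j)_{j\in\rho}$ as a real vector, the nullity condition $\curvo{\nu,\nu}=0$ becomes a single real-analytic equation in the real coordinates of $\nu_\Omega(q)$ (namely a quadratic relation among the components distributed over the three $\c$-factors), and because $\rho$ is wide this equation is not identically satisfied on $\s^{n-1}\subset\r^\rho$ — indeed a generic real unit vector supported on a wide coordinate set is non-null. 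Since $\nu_\Omega:\partial\Omega\to\s^{n-1}$ is a diffeomorphism ($\Omega$ strictly convex, $\kappa(\Omega)<\infty$), the bad set in $\partial\Omega$ is the preimage of a proper real-analytic subvariety of the sphere, hence of measure zero; taking the product with $\r^{\rho^*}$ keeps it measure zero in $\partial\Ccal_\rho(\Omega)$.

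\textbf{Main obstacle.} The routine parts are the product computation of normals and curvatures and the measure-zero argument for $\curvo{\,,\,}$-regularity. The delicate point is the \emph{simultaneous} requirement in null strict convexity that the null vector $v$ be orthogonal to $\nu$ \emph{and} escaping — i.e. reconciling the constraint "$v\in\esca{\nu}^\bot$" (which, for the degenerate-looking normals $(\nu_\Omega(q),\vec 0)$, could a priori force $\Pi_\rho(v)=0$) with "$\Pi_\rho(v)\neq 0$". Pinning down exactly why wideness ($\rho\neq\{2j-1,2j\}$, $n\geq 2$) is precisely the hypothesis that lets these two conditions be met at once — via the dimension count $\dim_\c\operatorname{span}_\c(\r^\rho)\geq 2$ and the explicit parametrization of $\Theta$ — is the crux of the argument.
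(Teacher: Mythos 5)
Your overall plan (product structure for the curvature bound, a genericity argument for $\curvo{\,,\,}$-regularity, wideness to produce null tangent directions with nonzero $\r^\rho$-projection) is in the spirit of the paper, but the null-strict-convexity step has a genuine gap. By Definition \ref{def:wsc} you must exhibit a vector in $\Theta_{\Ccal_\rho(\Omega)}(p)=\doble{\nu}^\bot\cap\Theta$ with $\nu=\nu_{\Ccal_\rho(\Omega)}(p)$, i.e. a null vector that is \emph{Hermitian}-orthogonal to $\nu$; this is two real linear conditions ($\esca{\nu,v}=0$ and $\esca{i\nu,v}=0$), and $\doble{\nu}^\bot$ is a complex $2$-plane strictly contained in the real hyperplane $\esca{\nu}^\bot$. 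Your reduction ``it suffices to find $v\in\Theta\cap\esca{\nu}^\bot$ with $\Pi_\rho(v)\neq\vec 0$'' (based on the claim that Hermitian and Euclidean orthogonality are equivalent here) therefore establishes something weaker than null strict convexity. The paper works inside $\doble{\nu}^\bot=\curvo{\overline{\nu}}^\bot$ and splits into two cases: if $\nu$ is null, take $v=\overline{\nu}$, for which $\Pi_\rho(\overline{\nu})=\overline{\nu_\Omega(\Pi_\rho(p))}\neq\vec 0$; if $\nu$ is not null, then $\doble{\nu}^\bot$ is a nondegenerate complex $2$-plane containing two $\c$-linearly independent null directions, and if $\Theta_{\Ccal_\rho(\Omega)}(p)\subset\r^{\rho^*}$ one would get $\dim_\c((\r^{\rho^*})_\c)\geq 2$, contradicting wideness. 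Note in particular that your assertion that for wide $\rho$ a normal of the form $(\nu_\Omega(q),\vec 0)$ can never be null is false: for $\rho=\{1,4\}$ the real unit vector with coordinates $x_1=x_4=\tfrac1{\sqrt2}$ is the null vector $\tfrac1{\sqrt2}(1,i,0)^T$. This is exactly why the paper keeps the null-normal case in the convexity argument and why $\curvo{\,,\,}$-regularity is only an almost-everywhere statement.

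A second, smaller, inaccuracy concerns the escaping set: you characterize the non-escaping directions of $\Ccal_\rho(\Omega)$ as precisely $\r^{\rho^*}$, i.e. you assume $\mathcal{E}_\Omega=\r^\rho-\{\vec 0\}$. Since $\Omega$ is not assumed bounded, this can fail (an unbounded strictly convex planar domain with $\kappa<+\infty$, such as the region above a parabola, contains half-lines). The correct argument, which is the paper's, is that any $v\in\doble{\nu}^\bot$ is in particular Euclidean-orthogonal to $\nu$, so $\Pi_\rho(v)$ is tangent to $\partial\Omega$ at $\Pi_\rho(p)$, and for a strictly convex domain tangent directions are escaping; then $\Pi_\rho(v)\in\mathcal{E}_\Omega$ yields $v\in\mathcal{E}_{\Ccal_\rho(\Omega)}$. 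Your treatment of the curvature bound is fine, and your measure-zero argument for $\curvo{\,,\,}$-regularity is essentially the paper's, except that ``a generic real unit vector supported on a wide coordinate set is non-null'' needs the justification the paper gives: if $\r^\rho\subset\Theta\cup\{\vec 0\}$ then by polarization ${\rm span}_\c(\r^\rho)\subset\Theta\cup\{\vec 0\}$, so $\Theta$ would contain a complex plane of dimension $\geq 2$, impossible for the nondegenerate form $\curvo{\,,\,}$ on $\c^3$.
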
 
\begin{proof} Label $n$ as the length of $\rho.$ Take $p \in \partial \mathcal{C}_\rho(\Omega),$ and observe that $\Pi_\rho (\nu_{\mathcal{C}_\rho(\Omega)}(p))=\nu_\Omega(\Pi_\rho(p))$ and 
$\Pi_{\rho^*} (\nu_{\mathcal{C}_\rho(\Omega)}(p))=\vec{0}.$ Call $\s_\rho^{n-1}=\s^5 \cap \r^\rho$ and $\Theta_{\rho}= \Theta \cap \s_\rho^{n-1}.$

Observe that $\Theta_{\rho}$ is an analytical submanifold of $\s_\rho^{n-1}$ of dimension $<n-1,$ and so has measure zero in $\s^{n-1}_\rho.$ Indeed, reason by contradiction and suppose that $\Theta_{\rho}= \s_\rho^{n-1},$ that is to say, $\r^\rho \subset \Theta.$ Then ${\rm span}_\c(\r^\rho) \subset \Theta$ as well, and so $\Theta$ contains a complex hyperplane of $\c^3$ (recall that $\rho$ is wide), a contradiction. The strictly convexity of  $\Omega$ implies that $\nu_\Omega:\partial \Omega \to \s^{n-1} \equiv \s_\rho^{n-1}$ is an injective local diffeomorphism, hence  $\nu_{\mathcal{C}_\rho(\Omega)}(p)$ is  not null for almost every $p \in \mathcal{C}_\rho(\Omega),$ proving the $\curvo{\,,\, }$-regularity.  

For the null strictly convexity, let us show first that $\Pi_\rho(\Theta_{\mathcal{C}_\rho(\Omega)}(p))\neq \{\vec{0}\}$ for all  $p \in \partial \mathcal{C}_\rho(\Omega).$  Indeed, if $\nu_{\mathcal{C}_\rho(\Omega)}(p)$ is null then $\overline{\nu_{\mathcal{C}_\rho(\Omega)}(p)} \in \Theta_{\mathcal{C}_\rho(\Omega)}(p)$ and  $\Pi_\rho(\overline{\nu_{\mathcal{C}_\rho(\Omega)}(p)})=\overline{\nu_\Omega(\Pi_\rho(p))}\neq \vec{0}.$ Assume now that  $\nu_{\mathcal{C}_\rho(\Omega)}(p)$ is not null, and reasoning by contradiction suppose that  $\Theta_{\mathcal{C}_\rho(\Omega)}(p) \subset \r^{\rho^*}.$ Since $\doble{\nu_{\mathcal{C}_\rho(\Omega)}(p)}^\bot$ is not $\curvo{ \,,\, }$-degenerate, then $\Theta_{\mathcal{C}_\rho(\Omega)}(p)$ contains two $\c$-linearly independent null vectors. This shows that  $\dim_\c((\r^{\rho^*})_\c)\geq 2,$ contradicting that $\rho$ is  wide. To finish, take any $v \in \Theta_{\mathcal{C}_\rho(\Omega)}(p)$ such that $\Pi_\rho(v) \neq \vec{0}.$ By the strictly convexity of $\Omega$ we have that  $\Pi_\rho(v) \in \mathcal{E}_\Omega,$ and so $v \in \mathcal{E}_{\Ccal_\rho(\Omega)}.$ 

Since $\kappa(\mathcal{C}_\rho(\Omega))=\kappa(\Omega)<+\infty,$ we are done.  
\end{proof}
\begin{remark} \label{re:falla}
If $\rho=\{2j-1,2j\}$ for some $j\in \{1,2,3\}$ and  $\Omega\subset \r^\rho$ is a regular convex domain,  then $\mathcal{C}_\rho(\Omega)$ is not  null strictly convex. Moreover, regular convex domains whose boundary contains a real half hyperplane  of $\c^3\equiv \r^6$ are not null strictly convex as well. 
\end{remark}

%%%%%%%%%%%%%%%%%%%%%%%
%%%%%%%%%%%%%%%%%%%%%%%
%%%%%%%%%%%%%%%%%%%%%%%%%%%%%%%%%%%%%%%%%%%%%%%%%%%%%%%%%%%%%%%%%%%%%%%%%%%%%%%%%%

\section{Main Theorem}

The following Lemma, which will be proved later in Section \ref{sec:lema}, is the kernel of the proof of our main theorem. 

\begin{lemma}\label{lem:main}
Let $\mathcal{D}$ be a $\curvo{ \,,\, }$-regular and null strictly convex domain in $\c^3$ with $\kappa(\Dcal)<+\infty,$  and consider $r\in (0,1/\kappa(\Dcal)).$  Let $M$ be a Runge compact region in $\Ncal,$ $P_0\in M^\circ$ and $F\in{\sf N}(M)$ satisfying that:
\begin{equation}\label{eq:lema}
F(\partial M) \subset \mathcal{D}-\overline{\Dcal_{-r}}.
\end{equation}
Then, for any  regular convex domain $\hat{\Dcal}$  and $\epsilon>0$ such that  $\overline{\Dcal}\subset \hat{\Dcal}_{-\epsilon}\subset \overline{\hat{\Dcal}} \subset  \Dcal_{1/\epsilon},$  there exist a Runge compact region $\hat{M}\subset\Ncal$ and $\hat{F}\in{\sf N}(\hat{M})$ satisfying that:
\begin{enumerate}[\rm (i)]
\item $M\subset\hat{M}^\circ$ and $M$ is isotopic to $\hat{M}.$ 
\item $\|\hat{F}-F\|_1<\epsilon$ on $M,$
\item $\hat{F}(\partial\hat{M})\subset  \hat{\mathcal{D}}-\overline{\hat{\Dcal}_{-\epsilon}},$
\item $\hat{F}(\hat{M}-M^\circ)\subset \hat{\mathcal{D}}-\overline{\mathcal{D}_{-r}},$
\item $\dist_{(\hat{M},\hat{F})}(P_0,\partial \hat{M})> \dist_{(M,F)}(P_0,\partial (M))+\sqrt{\frac{d}{\kappa(\mathcal{D})}},$ where $d=\dist(\partial \Dcal,\partial \hat{\Dcal}).$
\end{enumerate}
\end{lemma}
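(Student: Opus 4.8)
The plan is to obtain $\hat F$ from $F$ by a finite recursion, each step of which combines a ``Runge-type'' deformation (Lemma~\ref{lem:runge}) with a controlled push of the boundary along null directions escaping from the convex shells. First I would fix a proper sequence of convex domains $\{\Dcal, \hat\Dcal_1, \hat\Dcal_2, \ldots, \hat\Dcal\}$ interpolating between $\Dcal$ and $\hat\Dcal$ in the sense of Definition~\ref{de:proper}, chosen fine enough that the associated sum $\sum \sqrt{\dist(\hat\Dcal_k, \partial \hat\Dcal_{k+1})/\kappa(\hat\Dcal_k)}$ exceeds the target increment $\sqrt{d/\kappa(\Dcal)}$ in item (v) (recall $\kappa(\hat\Dcal_k)\le \kappa(\Dcal)$ for the outward parallel bodies); each individual step will gain an amount comparable to one term of that sum. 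Simultaneously the approximation errors will be split into a summable series bounded by $\epsilon$, so that conditions (ii), (iii), (iv) survive in the limit of the (finite) recursion.

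The single inductive step is the heart of the argument and is where the null-strict-convexity hypothesis enters. Suppose at some stage we have a compact region $M_k$ with $F_k\in {\sf N}(M_k)$ and $F_k(\partial M_k)\subset \hat\Dcal_k - \overline{(\hat\Dcal_k)_{-r_k}}$. I would enlarge $M_k$ slightly to $M_{k+1}$ by attaching a thin annular collar to each boundary curve (keeping the region incompressible and isotopic to $M_k$), and on that collar prescribe a generalized null curve which, near each boundary point $p\in\partial M_k$, moves $F_k$ by a large multiple of a fixed null escaping vector $v(p)\in \mathcal{E}_\Dcal\cap\Theta_\Dcal(\pi_\Dcal(F_k(p)))$ --- such a vector exists precisely by null-strict-convexity, and by a partition-of-unity/Mergelyan argument on the admissible set $M_{k+1}$ one can arrange the displacement to stay null ($\curvo{dF,dF}=0$) while being essentially radial with respect to the level sets of $\nu_{\hat\Dcal_k}$. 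Using Remark~\ref{re:pita} (the trigonometric estimate $\dist(q,(q+T_p\partial\Dcal)\cap\partial\hat\Dcal)\ge\sqrt{d_0^2+2d_0/\kappa(\Dcal)}$), the intrinsic length added along the collar in this escaping direction is at least of order $\sqrt{\dist(\hat\Dcal_k,\partial\hat\Dcal_{k+1})/\kappa(\hat\Dcal_k)}$, which yields the metric growth; the estimate \eqref{eq:dista} guarantees that while we push outward past $\partial(\hat\Dcal_k)_{-r_k}$ we never re-enter $\overline{\Dcal_{-r}}$, securing (iv). Then Lemma~\ref{lem:runge} replaces this generalized null curve on the admissible set by a genuine $F_{k+1}\in{\sf N}(M_{k+1})$, $C^1$-close to it, hence close enough to preserve all the shell-membership and length inequalities with a small loss charged to the $k$-th error budget.

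After finitely many steps the boundary image has been driven from the shell around $\Dcal$ out to the shell $\hat\Dcal - \overline{\hat\Dcal_{-\epsilon}}$, giving (iii), while (i) holds by construction (each collar attachment preserves the isotopy type), (ii) follows from summing the displacement errors --- note the displacements themselves, though large in the escaping direction, are controlled on the \emph{old} region $M$ where we never move $F$ at all, so $\|\hat F - F\|<\epsilon$ on $M$ is automatic once the corrections from the Runge steps are kept summably small on $M$ --- and (v) follows from the accumulated length estimates. The main obstacle I anticipate is the simultaneous control in the inductive step: one must push the boundary a \emph{definite} geometric amount (to make the telescoping length series diverge, as engineered via Lemma~\ref{lem:sucprop}) using only \emph{null} tangent directions, yet keep the whole deformation $C^1$-small away from the collar and keep the image trapped between the prescribed convex shells; reconciling the largeness needed for completeness with the smallness needed for properness and $C^0$-approximation --- and doing so with null 1-forms, which are rigid --- is the delicate point, and it is exactly what forces the hypotheses of $\curvo{\,,\,}$-regularity and null-strict-convexity together with the quantitative convex-geometry bounds of Remark~\ref{re:pita} and \eqref{eq:dista}.
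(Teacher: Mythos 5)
Your plan assembles the right ingredients (Lemma~\ref{lem:runge}, escaping null vectors from null strict convexity, Remark~\ref{re:pita}, \eqref{eq:dista}), but the step that carries the whole lemma --- the inductive boundary deformation --- is not actually constructed, and you flag it yourself as ``the main obstacle.'' Two concrete problems. First, a holomorphic null curve cannot be deformed by ``moving $F_k$ near each boundary point by a large multiple of a fixed null escaping vector $v(p)$'': the displacement of a null curve is itself governed by the quadratic constraint $\curvo{dF,dF}=0$ and cannot be prescribed pointwise and ``essentially radially''; a partition of unity destroys holomorphicity and Mergelyan only recovers it \emph{after} you have exhibited a genuine generalized null curve on an admissible set, which is exactly what is missing. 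Second, and more seriously, item (v) requires a lower bound on the length of \emph{every} path from $P_0$ to the new boundary, whereas your estimate only bounds the length gained ``along the collar in the escaping direction,'' i.e.\ along the radial paths you deform; nothing prevents a path from running sideways through the collar and exiting where the displacement is small. Likewise (iii) is not addressed: you cannot prescribe in advance that the new boundary lands in the thin shell $\hat\Dcal-\overline{\hat\Dcal_{-\epsilon}}$; some a posteriori choice of $\hat M$ is needed. (Also note the outer interpolation by a proper sequence of convex domains is superfluous here: Remark~\ref{re:pita} already yields the full increment $\sqrt{d/\kappa(\Dcal)}$ in one pass; the telescoping sum of Lemma~\ref{lem:sucprop} is what drives Theorem~\ref{th:main}, not this lemma. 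And ``we never move $F$ at all on $M$'' is impossible by the identity theorem --- only smallness on $M$ can be arranged, as you later concede.)

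What the paper does, and what a correct proof needs in some form, is a mechanism that makes the length gain \emph{persistent} and the distance estimate \emph{dichotomic}. The boundary $\partial M$ is subdivided into arcs subordinate to a covering by sets $W_{i,j}$ on which the supporting hyperplanes are not $\curvo{\,,\,}$-degenerate; at the division points one attaches arcs $r_{i,j}$ along which a generalized null curve zig-zags in a null direction (accumulating length in the third coordinate of a $\curvo{\,,\,}$-conjugate frame $A_{i,j}$ adapted to $e_{i,j}=\nu_\Dcal(p_{i,j})$) and then escapes beyond $\Dcal_{1/\epsilon_1}$; then, recursively over the boundary discs $\Omega_{i,j}$, the curve is deformed only inside the complex hyperplane $\doble{e_{i,j}}^\bot$, invoking the option of Lemma~\ref{lem:runge} that keeps the third coordinate fixed, so that the accumulated coordinate length is frozen by all later steps while \eqref{eq:dista} keeps the image outside $\overline{\Dcal_{-r}}$ (this is your (iv)). The new region $\hat M$ is then taken (essentially) as the component of the preimage of $\overline{\hat\Dcal}$ containing $M$, which gives (iii), and (v) follows from the dichotomy: a path reaching $\partial\hat M$ either crosses a collar $\tilde s_{i,j}$, where the frozen third-coordinate length exceeds $\sqrt{d/\kappa(\Dcal)}+\epsilon_1$, or it stays in a slab $W_{i,j}+\doble{e_{i,j}}^\bot$, where the extrinsic estimate of Remark~\ref{re:pita} applies. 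Your proposal contains no substitute for this persistence/dichotomy structure, so as written it does not establish (iii) or (v).
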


We are now ready to state and prove our main theorem. See Definition \ref{def:wide} for notations.

\begin{theorem}\label{th:main}
Let $\rho\subset \{1,\ldots,6\}$ be a wide sequence, and let $\Omega$ be a (possibly neither bounded nor regular) convex domain in $\r^\rho.$  
Let $M\subset\Ncal$ be a Runge compact region in $\Ncal,$ and consider a null curve $X\in {\sf N}(M)$ satisfying that  
\begin{equation}\label{eq:theorem}  
\text{$\Pi_\rho( X(\partial M))\subset \Lambda-\overline{\Lambda_{-r}},$}
\end{equation}
where $\Lambda$ is a bounded regular strictly convex domain in $\r^\rho$ so that  $\overline{\Lambda}\subset \Omega$ and $r\in(0,1/\kappa(\Lambda)).$

Then, for any $\xi>0$ there exist an open  domain $N\subset \Ncal$ and a null curve $Y:N\to \c^3$ satisfying that
\begin{enumerate}[\rm (a)]
\item $M\subset N,$ $N$ is Runge and $N$ is isotopic to $\Ncal,$ 
\item $\|Y-X\|_1\leq \xi$ on $M,$
\item $Y$ is complete,
\item $\Pi_\rho\circ Y:N\to \Omega$ is proper  and $\Pi_\rho(Y(N-M^\circ))\subset \Omega-\overline{\Lambda_{-r}}.$
\end{enumerate}
\end{theorem}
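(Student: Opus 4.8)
The plan is to realize $N$ and $Y$ as limits of a recursively built sequence $\{(M_n,F_n)\}_{n\in\n}$, where $M_n\subset\Ncal$ is an incompressible compact region and $F_n\in{\sf N}(M_n)$, by alternating two moves: \emph{geometric} ones, supplied by Lemma \ref{lem:main}, which push $F_n(\partial M_n)$ towards the boundary of the target while enlarging $\dist_{(M_n,F_n)}(P_0,\partial M_n)$; and \emph{topological} ones, which absorb one cycle of $\Hcal_1$ at a time until $M_n$ exhausts the topology of $\Ncal$. As preparation, fix an exhaustion $\Ncal_1\subset\Ncal_2\subset\cdots$ of $\Ncal$ by incompressible compact regions; by Lemma \ref{lem:sucprop} choose a proper sequence $\{\Omega_k\}_{k\in\n}$ of convex domains in $\Omega$ and, after discarding finitely many initial terms (the divergence of the series in Definition \ref{de:proper} is a property of the tail) and prepending $\Lambda$, assume $\Omega_1=\Lambda$. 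Put $\mathcal{D}_k:=\Ccal_\rho(\Omega_k)$; since every $\Omega_k$ is bounded, regular and strictly convex with $\kappa(\Omega_k)<+\infty$ and $\rho$ is wide, Proposition \ref{pro:dense} gives that $\mathcal{D}_k$ is $\curvo{\,,\,}$-regular, null strictly convex and $\kappa(\mathcal{D}_k)=\kappa(\Omega_k)<+\infty$, and moreover $(\mathcal{D}_k)_t=\Ccal_\rho((\Omega_k)_t)$ and $\dist(\partial\mathcal{D}_k,\partial\mathcal{D}_{k+1})=\dist(\Omega_k,\partial\Omega_{k+1})$, because every outward normal of a cylinder lies in $\r^\rho$. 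Finally fix a rapidly decreasing $\{\epsilon_k\}_{k\in\n}\subset(0,+\infty)$ with $\sum_k\epsilon_k\le\xi$, $\epsilon_k<1/\kappa(\Omega_{k+1})$, $\overline{\mathcal{D}_k}\subset(\mathcal{D}_{k+1})_{-\epsilon_k}\subset\overline{\mathcal{D}_{k+1}}\subset(\mathcal{D}_k)_{1/\epsilon_k}$, and with $\sum_{k\ge n}\epsilon_k$ so small that the compact sets $X(\partial M)$ and $F_n(M_n-M_{n-1}^\circ)$ keep a safe distance from the complements of the open collars they will be shown to sit in (this last condition is imposed term by term along the recursion, once those sets have been produced).

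Recursion. Set $M_1:=M$, $F_1:=X$, $r_1:=r$; by \eqref{eq:theorem} and $\Omega_1=\Lambda$ one has $F_1(\partial M_1)\subset\mathcal{D}_1-\overline{(\mathcal{D}_1)_{-r_1}}$, i.e.\ \eqref{eq:lema} at level $1$. Given $(M_n,F_n,r_n)$ with $r_n\in(0,1/\kappa(\mathcal{D}_n))$ and $F_n(\partial M_n)\subset\mathcal{D}_n-\overline{(\mathcal{D}_n)_{-r_n}}$: while $M_n$ is not isotopic to $\Ncal_n$, choose an analytical arc $\alpha$ with endpoints on $\partial M_n$ and interior in $\Ncal-M_n$ so that $S:=M_n\cup\alpha$ is admissible and its tubular neighbourhoods carry the next generators of the topology of $\Ncal$, extend $F_n|_{M_n}$ to a generalized null curve $\widetilde F$ on $S$ with $\widetilde F(\alpha)$ inside the connected collar $\mathcal{D}_n-\overline{(\mathcal{D}_n)_{-r_n}}$ (null curve arcs are flexible, cf.\ Remark \ref{rem:nulidad}), $\mathcal{C}^1$-approximate $\widetilde F$ on $S$ by a null curve on a tubular neighbourhood of $S$ via Lemma \ref{lem:runge}, and restrict it to a compact region $M_n'\supset M_n$ isotopic to that neighbourhood whose boundary closely follows $\partial M_n\cup\alpha$; a tight enough approximation yields $G\in{\sf N}(M_n')$ with $G(\partial M_n')\subset\mathcal{D}_n-\overline{(\mathcal{D}_n)_{-r_n'}}$ for some $r_n'\in(0,r_n)$, after which we relabel $(M_n',G,r_n')$ as $(M_n,F_n,r_n)$; after finitely many such moves $M_n$ is isotopic to $\Ncal_n$. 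Now replace $M_n$ by a slightly larger isotopic compact region $M_n^+$, with $M_n\subset(M_n^+)^\circ$, $M_n^+$ inside the domain of definition of $F_n$, and $F_n$ still mapping $\partial M_n^+$ into the open collar $\mathcal{D}_n-\overline{(\mathcal{D}_n)_{-r_n}}$, and apply Lemma \ref{lem:main} with $\mathcal{D}=\mathcal{D}_n$, the present $r=r_n$, $\hat{\mathcal{D}}=\mathcal{D}_{n+1}$, $\epsilon=\epsilon_n$ and basepoint $P_0$. This produces $(M_{n+1},F_{n+1})$ with $M_n^+\subset M_{n+1}^\circ$, $M_{n+1}$ isotopic to $M_n^+$, $\|F_{n+1}-F_n\|<\epsilon_n$ on $M_n^+\supset M_n$, $F_{n+1}(\partial M_{n+1})\subset\mathcal{D}_{n+1}-\overline{(\mathcal{D}_{n+1})_{-\epsilon_n}}$, $F_{n+1}(M_{n+1}-M_n^\circ)\subset\mathcal{D}_{n+1}-\overline{(\mathcal{D}_n)_{-r_n}}$ (using item (iv) on $M_{n+1}-(M_n^+)^\circ$ and continuity plus $\|F_{n+1}-F_n\|<\epsilon_n$ on the thin collar $M_n^+-M_n^\circ$), and $\dist_{(M_{n+1},F_{n+1})}(P_0,\partial M_{n+1})>\dist_{(M_n,F_n)}(P_0,\partial M_n)+\sqrt{\dist(\Omega_n,\partial\Omega_{n+1})/\kappa(\Omega_n)}$. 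Putting $r_{n+1}:=\epsilon_n\in(0,1/\kappa(\mathcal{D}_{n+1}))$ preserves all inductive hypotheses, so the recursion is well defined; and by our choice of the $\epsilon_k$, $\overline{\Omega_{n-1}}\subset(\Omega_n)_{-r_n}$.

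Passing to the limit. Let $N:=\bigcup_n M_n$; it is an incompressible domain containing $M$, and since the $M_n$ eventually absorb every $\Ncal_j$ it is isotopic to $\Ncal$, giving (a) (it is hyperbolic, being a domain in $\Ncal$). Because $F_{n+1}-F_n$ is holomorphic near $M_n^+$ and bounded by $\epsilon_n$ there, Cauchy's estimates yield uniform $\mathcal{C}^1$-smallness of $F_{n+1}-F_n$ on the smaller region $M_n$; with the $\epsilon_k$ chosen so that the resulting infinite products stay above a fixed $c>0$, the sequence $\{F_n\}$ converges in $\mathcal{C}^1$ on compacta of $N$ to a holomorphic $Y$ with $\curvo{dY,dY}=0$ and $ds_Y^2\ge c\,ds_{F_n}^2$ on $M_n$ for every $n$; hence $ds_Y^2$ never vanishes and $Y$ is a null curve, while $\|Y-X\|\le\sum_k\epsilon_k\le\xi$ on $M$ is (b). For (c), iterating the distance inequality gives
\[
\dist_{(M_n,F_n)}(P_0,\partial M_n)>\dist_{(M_1,F_1)}(P_0,\partial M_1)+\sum_{k=1}^{n-1}\sqrt{\frac{\dist(\Omega_k,\partial\Omega_{k+1})}{\kappa(\Omega_k)}},
\]
whose right-hand side diverges because $\{\Omega_k\}$ is proper; since $ds_Y^2\ge c\,ds_{F_n}^2$ on $M_n$, the initial sub-arc of any divergent path from $P_0$ that first reaches $\partial M_n$ has $ds_Y^2$-length at least $\sqrt{c}\,\dist_{(M_n,F_n)}(P_0,\partial M_n)$, so every divergent path has infinite length and $Y$ is complete. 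For (d), the geometric moves give $\Pi_\rho(F_{n+1}(M_{n+1}-M_n^\circ))\subset\Omega_{n+1}-\overline{(\Omega_n)_{-r_n}}$ with $r_1=r$ and, for $n\ge 2$, $(\Omega_n)_{-r_n}\supset\overline{\Omega_{n-1}}$; since $\|Y-F_{n+1}\|\le\sum_{k>n}\epsilon_k$ on $M_{n+1}$ is negligible against the distance of these compact images to the complements of the open collars containing them, it follows that $\Pi_\rho(Y(M_{n+1}-M_n^\circ))\subset\Omega$ and is disjoint from $\overline{\Lambda_{-r}}$ (case $n=1$) and from $\overline{\Omega_{n-1}}$ (case $n\ge 2$). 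As $\{M_{n+1}-M_n^\circ\}_{n\ge 1}$ covers $N-M^\circ$ and $\{\overline{\Omega_k}\}\nearrow\overline{\Omega}$, this says precisely that $\Pi_\rho\circ Y:N\to\Omega$ is proper and $\Pi_\rho(Y(N-M^\circ))\subset\Omega-\overline{\Lambda_{-r}}$.

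Main difficulty. The substance of the argument is already packed into Lemma \ref{lem:main}; in assembling the theorem the two delicate points are: (i) interlacing the topology-absorbing moves with the geometric ones so that $F_n(\partial M_n)$ never escapes the correct thin collar $\mathcal{D}_n-\overline{(\mathcal{D}_n)_{-r_n}}$, which forces one to exploit the flexibility of null curve arcs together with the Runge-type Lemma \ref{lem:runge}; and (ii) keeping the completeness and properness estimates alive in the limit, since Lemma \ref{lem:main} only provides a $\mathcal{C}^0$-approximation on $M_n$: one must first upgrade it to a $\mathcal{C}^1$-approximation on the slightly smaller $M_n\subset(M_n^+)^\circ$ by Cauchy's estimates before the metrics $ds_{F_n}^2$ can be compared with $ds_Y^2$ and item (v) used, and one must choose the tails of $\{\epsilon_k\}$ small relative to the (positive) depths of the relevant compact images inside the open collars so that $Y$ inherits the inclusions of item (iv).
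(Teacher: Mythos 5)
Your proposal is correct and follows essentially the same route as the paper: a recursion alternating applications of Lemma \ref{lem:main} (over the cylinders $\Ccal_\rho(\Omega_k)$ built from a proper sequence of convex domains via Lemma \ref{lem:sucprop} and Proposition \ref{pro:dense}) with Runge-type topological surgeries via Lemma \ref{lem:runge}, followed by a limiting argument. The differences are only in bookkeeping: you choose the $\epsilon_k$ adaptively and get immersedness/completeness of $Y$ from quantitative $\mathcal{C}^1$ (Cauchy-estimate) control, where the paper fixes $\epsilon_j<\xi/2^{j+1}$ in advance, shifts the collar to width $r-\xi$, and invokes Hurwitz's theorem; both versions work.
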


\begin{proof}
Fix an auxiliary exhaustion $\{M_j\}_{j\in\n}$ of $\Ncal$ by  Runge compact regions so that $M_1^\circ$ is a tubular neighborhood of $M,$ and $M_{j-1}\subset M_j^\circ$   and the Euler characteristic $\chi(M_j-M_{j-1}^\circ)\in\{-1,0\}$ for all $j\geq 2.$

Let $\{\Omega_j\}_{j \in \n}$ be a proper sequence in $\Omega$ of convex domains (see Lemma \ref{lem:sucprop}), and denote by 
\[
d_j={\rm dist}(\Omega_j,\Omega_{j+1})\quad\text{ and }\quad \kappa_j=\kappa(\Omega_j).
\]
Without loss of generality, we can suppose that $\overline{\Lambda} \subset \Omega_1.$ Call 
\[\Omega_0=\Lambda\quad\text{ and }\quad\Dcal^j=\mathcal{C}_\rho(\Omega_j)=\{x\in\r^6 \;|\; \Pi_\rho(x)\in\Omega_j\}
\]
for all $j\geq 0.$ Since $\Omega_j$ is a bounded regular strictly convex domain, then $\kappa(\Omega_j)<+\infty$ and, by Proposition \ref{pro:dense},  $\Dcal^j$ is  $\curvo{ \,,\, }$-regular, null strictly convex and $\kappa(\Dcal^j)<+\infty,$ $j \geq 0.$

Without loss of generality, we assume that $\xi$ is small enough so that $\epsilon_0:=r-\xi>0$ 
and 
\begin{equation}\label{eq:3.2'}
\Pi_\rho( X(\partial M))\subset \Lambda-\overline{\Lambda_{-\epsilon_0}},
\end{equation}
see \eqref{eq:theorem}. 

Fix $Q_0\in M^\circ$ and denote by $N_0=M,$ $\sigma_0={\rm Id}_{N_0}$ and $X_0=X.$

Let us show the following 
\begin{claim}\label{cla:sucesion}
There exists a sequence $\{(N_j,\sigma_j,X_j,\epsilon_j)\}_{j\in\n},$  where $N_j\subset \Ncal$ is a Runge compact region isotopic to $M_j,$ $\sigma_j:N_j \to M_j$ is an isotopic homeomorphism, $X_j\in {\sf N}(N_j)$ and $\epsilon_j>0$ for all $j,$  satisfying that:
\begin{enumerate}[\rm (I{$_j$})]
\item $N_{j-1}\subset N_j^\circ$ and  $\sigma_j|_{N_{j-1}}=\sigma_{j-1}$  $\forall j\in \n,$
\item $\|X_j-X_{j-1}\|_1<\epsilon_j$ on $N_{j-1},$ $\forall j\in \n,$ where $\epsilon_j>0$ is  chosen small enough so that
\begin{equation}\label{eq:epsilonxi}
\epsilon_j<\min\left\{\epsilon_{j-1} \;,\; \frac{\xi}{2^{j+1}}\;,\; \frac1{2^{j+1}} \min\big\{\min_{N_k} \|d X_k/\sigma_\Ncal\|\;|\; k=0,\ldots,j-1\big\} \right\}\;\; \text{and}
\end{equation}
\begin{equation}\label{eq:epsilonxi2}
\overline{\Dcal^{j-1}}\subset(\Dcal^{j})_{-\epsilon_j}\subset \overline{\Dcal^j}\subset (\Dcal^{j-1})_{1/\epsilon_j},\quad \forall j\in\n,
\end{equation}
\item $X_j(\partial N_j)\subset \Dcal^j-\overline{(\Dcal^j)_{-\epsilon_j}},$ $\forall j\in\n,$ 
\item $X_j(N_j-N_{j-1}^\circ)\subset \Dcal^j-\overline{(\Dcal^{j-1})_{-\epsilon_{j-1}}},$ $\forall j\in \n,$ and
\item $\dist_{(N_j,X_j)}(Q_0,\partial N_j)> \dist_{(N_1,X_1)}(Q_0,\partial N_1)+ \sum_{a=1}^{j-1}  \sqrt{\frac{d_a}{\kappa_a}},$ $j\geq 2.$
\end{enumerate}
\end{claim}
\begin{proof}
The sequence will be constructed recursively. For $j=1$ choose $\epsilon_1$ satisfying \eqref{eq:epsilonxi} and \eqref{eq:epsilonxi2}. Observe that \eqref{eq:3.2'} allows to apply Lemma \ref{lem:main} for the data $(\Dcal,r,M,F,\hat{\Dcal},\epsilon,P_0)=(\Dcal^0,\epsilon_0,N_0, X_0,\Dcal^1,\epsilon_1,Q_0).$  Denote by $(N_1,X_1)$ the arising data $(\hat{M},\hat{F}).$
Note that $N_0 \subset N_1^\circ$ and that $N_1,$ $M_1$ and $N_0$ are homeomorphic. Choosing $\sigma_1:N_1 \to M_1$ any homeomorphism satisfying $\sigma_1|_{N_0}={\rm Id}_{N_0},$ all the above items hold for $j=1$ (item (V$_1$) makes no sense). 

Assume that we have already constructed $X_1,\ldots, X_{j-1}$ satisfying the required properties, and let us construct $X_j.$ Choose $\epsilon_j$ to satisfy \eqref{eq:epsilonxi} and \eqref{eq:epsilonxi2}, and let us distinguish two cases:

\noindent$\bullet$ $\chi(M_j-M_{j-1}^\circ)=0.$ We denote by $(N_j,X_j)$ the couple $(\hat{M},\hat{F})$ arising from Lemma \ref{lem:main} for the data
$(\Dcal,r,M,F,\hat{\Dcal},\epsilon,P_0)=(\Dcal^{j-1},\epsilon_{j-1},N_{j-1},X_{j-1},\Dcal^j,\epsilon_j,Q_0),$ which can be applied since (III$_{j-1}$) holds. Since $N_j,$ $N_{j-1}$ and $M_j$ are homeomorphic and $N_{j-1} \subset N_j^\circ,$ then we can choose $\sigma_j:N_j \to M_j$ any homeomorphism so that $\sigma_j|_{N_{j-1}}=\sigma_{j-1}.$  Properties (I$_j$), (II$_j$), (III$_j$) and (IV$_j$) are straightforward, whereas (V$_j$) follows from Lemma \ref{lem:main}-(v) and (V$_{j-1}$).

\noindent$\bullet$  $\chi(M_j-M_{j-1}^\circ)=-1.$  Consider a closed curve $\hat{\alpha}\in \Hcal_1(M_j,\z)-\Hcal_1(M_{j-1},\z)$ contained in $M_j^\circ$ and  intersecting $M_j-M_{j-1}^\circ$ in a Jordan arc $\alpha$ with  endpoints in $\partial (M_{j-1})$ and otherwise disjoint from $M_{j-1}.$ Since $M_j$ is Runge then  $\Hcal_1(M_j,\z)=\Hcal_1(M_{j-1}\cup \alpha,\z)$ and $\Ncal-(M_{j-1} \cup \alpha)$ has no bounded components. Take a Jordan arc $\gamma\subset  \Ncal-N_{j-1}^\circ$  and an isotopic homeomorphism $\varsigma:N_{j-1} \cup \gamma \to M_{j-1} \cup \alpha$ so that  $\varsigma|_{N_{j-1}}=\sigma_{j-1}$ and $\varsigma(\gamma)=\alpha.$ Note that $\Ncal -(N_{j-1} \cup \gamma)$ has no bounded components as well, hence without loss of generality we can assume that $S:=N_{j-1}\cup \gamma$ is  admissible.
 
Let $M_{j-1}'\subset M_j^\circ$ be any compact region isotopic to $M_j$ and containing $M_{j-1} \cup \alpha$ in its interior. Construct a generalized null curve $Z:S\to\c^3$ with $Z|_{N_{j-1}}=X_{j-1}$ and $Z(\gamma)\subset \Dcal^{j-1}-\overline{(\Dcal^{j-1})_{-\epsilon_{j-1}}}$ (this is possible by (III$_{j-1}$)).  Applying Lemma \ref{lem:runge} to $S,$ $Z$ and any tubular neighborhood  $W$ of $S,$ we get a compact region $N_{j-1}'$ and a null curve $X'_{j-1}\in {\sf N}(N_{j-1}')$ such that
\begin{itemize}
\item $S \subset (N_{j-1}')^\circ$ and $N_{j-1}'$ is isotopic to  $W,$ 
\item $\|X'_{j-1}-X_{j-1}\|_1<\epsilon_j/2$ on $N_{j-1},$
\item $X'_{j-1}(\partial N_{j-1}')\subset \Dcal^{j-1}-\overline{(\Dcal^{j-1})_{-\epsilon_{j-1}}},$ see (III$_{j-1}$), 
\item $X'_{j-1}(N'_{j-1}-N_{j-2}^\circ)\subset \Dcal^{j-1}-\overline{(\Dcal^{j-2})_{-\epsilon_{j-2}}},$ see (IV$_{j-1}$),  and
\item $\dist_{(N_{j-1}',X'_{j-1})}(Q_0,\partial N_{j-1}')>\dist_{(N_1,X_1)}(Q_0,\partial (N_1))+ \sum_{a=1}^{j-2}  \sqrt{\frac{d_a}{\kappa_a}},$ see (V$_{j-1}$).
\end{itemize}
Fix an isotopic homeomorphism $\sigma'_{j-1}:N_{j-1}'\to M'_{j-1}$ with $\sigma'_{j-1}|_S=\varsigma.$ To finish, notice that $\chi(M_j-(M_{j-1}')^\circ)=0,$  set $(N_j,X_j)$ as the  couple $(\hat{M},\hat{F})$ arising from Lemma \ref{lem:main} for the data $(\Dcal,r,M,F,\hat{\Dcal},\epsilon,P_0)=(\Dcal^{j-1},\epsilon_{j-1},N'_{j-1},X'_{j-1},\Dcal^j,\epsilon_j/2,Q_0),$ and take $\sigma_j:N_j \to M_j$ any homeomorphic extension of $\sigma'_{j-1}.$ 
\end{proof}
Label $N=\cup_{j\in\n} N_j$ and set $\sigma:N \to \Ncal,$ $\sigma|_{N_j}=\sigma_j.$ Since $\{M_j\}_{j\in\n}$ is an exhaustion of $\Ncal$ by Runge compact regions and $\sigma_j$ is an isotopic homeomorphism for all $j,$ then  $\sigma$ is an isotopic homeomorphim as well and  item (a) holds.

By items (II$_j$), $j\in\n,$ the sequence $\{X_j\}_{j\in\n}$ uniformly converges on compact subsets of $N$ to a holomorphic map $Y:N\to\c^3$ such that $\curvo{ dY,dY}=0$ and $\|Y-X\|_1\leq \xi$ on $M,$ which shows (b). 

Let us check that $Y$ is an immersion. Indeed, let $P\in N$ and take $j\in\n$ such that $P\in N_{j-1}.$ Then (II$_j$) and \eqref{eq:epsilonxi} imply that 
\begin{eqnarray*}
\|dY/\sigma_\Ncal\|(P) & \geq & \|dX_{j-1}/\sigma_\Ncal\|(P) - \sum_{k\geq j-1} \|X_{k+1}-X_k\|_1 \\
 & > & \|dX_{j-1}/\sigma_\Ncal\|(P) - \sum_{k\geq j} \epsilon_k \\
 & > & \|dX_{j-1}/\sigma_\Ncal\|(P) - \sum_{k\geq j} \frac{1}{2^{k+1}} \|dX_{j-1}/\sigma_\Ncal\|(P) \\
 & \geq & \frac12 \|dX_{j-1}/\sigma_\Ncal\|(P)>0,
\end{eqnarray*}
hence $Y$ is an immersion as claimed.

The completeness of $Y$ follows from (V$_j$), $j\in\n,$ and the fact that the series $\sum_{a\geq 1}  \sqrt{\frac{d_a}{\kappa_a}}$ is divergent (recall that $\{\Omega_j\}_{j \in \n}$ is a proper sequence in $\Omega$). 

Finally, let us check (d). Since (II$_k$), $k>j,$ we get that $\|Y-X_j\|_1\leq \xi/2^j$ on $N_j,$ and from (III$_j$) that $Y(\partial N_j) \subset (\Dcal^j)_{2^{-j} \xi} -\overline{(\Dcal^{j})_{-\epsilon_{j}- 2^{-j}\xi}}.$ Thus $\Pi_\rho (Y(\partial N_j)) \subset (\Omega_j)_{2^{-j} \xi} -\overline{(\Omega_j)_{-\epsilon_{j}- 2^{-j}\xi}},$ and so  by the maximum principle $\Pi_\rho (Y(N_j)) \subset (\Omega_j)_{2^{-j} \xi}$  for all $j.$ Therefore,  $\Pi_\rho (Y(N))\subset \overline{\Omega},$ hence  $\Pi_\rho (Y(N))\subset \Omega$ again by the maximum principle. From (IV$_j$) we deduce that $\Pi_\rho(Y(N_j-N_{j-1}^\circ)) \subset \Omega -\overline{(\Omega_{j-1})_{-\epsilon_{j-1}-2^{-j} \xi}}$ for all $j\geq 1,$ proving that $\Pi_\rho \circ Y:N \to \Omega$ is proper.  
Since $\Lambda \subset \Omega_j$ and $\epsilon_j+2^{-j-1}\xi<\epsilon_0+\xi=r$  then $\Lambda_{-r}\subset (\Omega_j)_{-\epsilon_j-2^{-j-1}\xi},$ and  so 
 $\Pi_\rho(Y(N_j-N_{j-1}^\circ)) \subset \Omega-\overline{\Lambda_{-r}}$ for all $j\geq 1,$  which proves (d) and the theorem.
\end{proof}

\begin{remark} \label{falla1}
The hypothesis that $\rho$ is wide and Proposition \ref{pro:dense} allow us to use Lemma \ref{lem:main} during the proof of Theorem \ref{th:main} (see Remark \ref{re:falla}). Hence, they play a crucial role in this setting. 

Moreover, recall that $\Ncal$ is hyperbolic, then so is $N.$ 
\end{remark}

Complete null curves in $\c^3$ project on complete holomorphic immersions in $\c^2$ and complete minimal immersions in $\r^3.$ From Theorem \ref{th:main} we infer that:

\begin{corollary}\label{co:main}
The following assertions hold:
\begin{enumerate}[\rm (a)]
\item For any convex domain $\Omega$ in $\c^3$ there exist a domain $N\subset \Ncal$ homeomorphic to $\Ncal$ and a complete proper null curve $F:N\to\Omega.$
\item For any convex domain $\Omega$ in $\r^3$ there exist a domain $N\subset \Ncal$ homeomorphic to $\Ncal$ and a conformal complete proper minimal immersion $F:N\to\Omega$ with vanishing flux.
\item For any convex domain $\Omega$ in $\c^2$ there exist a domain $N\subset \Ncal$ homeomorphic to $\Ncal$ and a complete proper holomorphic immersion $F:N\to\Omega.$ 
\item For any convex domain $\Omega$ in $\r^2$ there exist a domain $N\subset \Ncal$ homeomorphic to $\Ncal$ and a complete holomorphic immersion $F:N\to\c^2$ such that ${\rm Re}(F)(N)\subset\Omega$ and  ${\rm Re}(F):N\to\Omega$ is proper.
\item For any convex domain $\Omega$ in $\r^2$ there exist a domain $N\subset \Ncal$ homeomorphic to $\Ncal$ and a conformal complete minimal immersion $F=(F_j)_{j=1,2,3}:N\to\r^3$ with vanishing flux such that $(F_1,F_2)(N)\subset\Omega$ and $(F_1,F_2):N\to\Omega$ is proper.
\end{enumerate}
\end{corollary}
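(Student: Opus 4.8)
\textbf{Proof proposal for Corollary \ref{co:main}.}

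The plan is to derive all five statements from Theorem \ref{th:main} by choosing an appropriate wide sequence $\rho$ and an appropriate convex domain $\Omega\subset\r^\rho$, and then verifying that the hypothesis \eqref{eq:theorem} can always be arranged for a suitable initial compact region $M$ and null curve $X\in{\sf N}(M)$. For part (a), take $\rho=\{1,\ldots,6\}$, so that $\r^\rho=\r^6\equiv\c^3$, $\Pi_\rho=\mathrm{Id}$ and $\Ccal_\rho(\Omega)=\Omega$; this $\rho$ is trivially wide. For parts (c) and (d) take $\rho=\{1,2,3,4\}$ (identifying $\r^\rho$ with $\c^2$ via the first two coordinates) — this is wide since it is not of the form $\{2j-1,2j\}$ — so that $\Pi_\rho\circ F$ is essentially the first two complex components of the null curve; for (d) one further notes that projecting $\c^2\to\r^2$ onto real parts, after a preliminary complex rotation, realizes the cylinder over a planar convex domain. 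For parts (b) and (e) one uses that the real part of a null curve $F=(F_j):N\to\c^3$ is a conformal minimal immersion $\mathrm{Re}(F):N\to\r^3$ with vanishing flux (flux vanishes precisely because $dF$ is exact); then (b) follows from (a) composed with the projection $\c^3\to\r^3$, $(z_j)\mapsto(\mathrm{Re}(z_j))$, noting that a convex domain $\Omega\subset\r^3$ pulls back to the $\curvo{\,,\,}$-irrelevant cylinder $\Ccal_{\{1,3,5\}}(\Omega)$ — but $\{1,3,5\}$ is \emph{not} wide, so instead one takes $\rho=\{1,2,3,4,5\}$ or rather observes that completeness and properness of $\mathrm{Re}(F)$ into $\Omega\subset\r^3$ is implied by properness of the projection of $F$ into the cylinder over $\Omega$ along a wide $\rho$ that refines $\{1,3,5\}$; concretely $\rho=\{1,2,3,5\}$ works, and (e) uses $\rho=\{1,3\}$... which again fails to be wide, so one takes $\rho=\{1,2,3\}$ and projects.

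The key steps, in order, are: (1) reduce each assertion to an application of Theorem \ref{th:main} by specifying $\rho$ and the target convex domain inside $\r^\rho$, being careful that $\rho$ is wide in every case (this forces, e.g., using $\{1,2,3\}$ rather than $\{1,3\}$ for the $\r^2$-statements, and then composing with a further linear projection that is still proper on the relevant subspace); (2) produce a starting datum $(M,X)$ with $X\in{\sf N}(M)$ satisfying \eqref{eq:theorem} for some bounded regular strictly convex $\Lambda$ with $\overline\Lambda\subset\Omega$ and some $r\in(0,1/\kappa(\Lambda))$ — one may simply take $M$ a small disc (or any incompressible compact region inside $\Ncal$ homeomorphic to a prescribed piece of $M$) and $X$ a suitably scaled null curve, e.g. an affine image of $(\frac12 z(1-w^2),\frac i2 z(1+w^2),zw)$ restricted to $M$, arranged so that $\Pi_\rho(X(\partial M))$ lands in a thin shell $\Lambda-\overline{\Lambda_{-r}}$; (3) invoke Theorem \ref{th:main} to get $N\subset\Ncal$ and a null curve $Y:N\to\c^3$ with $\Pi_\rho\circ Y:N\to\Omega$ proper and $Y$ complete; (4) for (b),(d),(e), post-compose with the relevant real-linear projection and check that properness of $\Pi_\rho\circ Y$ into $\Omega$ transfers to properness of the real projection and that completeness is preserved (it is, since the projections in question are distance-nonincreasing in the relevant directions only, but completeness of $Y$ is intrinsic and survives). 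Finally note $N$ is homeomorphic to $\Ncal$ by item (a) of Theorem \ref{th:main}, and $\Ncal$ was chosen hyperbolic.

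The main obstacle is the \emph{wideness bookkeeping} together with the flux/properness transfer in (b) and (e): the naive choice of coordinate subspace for a minimal surface in $\r^3$ or a minimal surface projecting to $\r^2$ is $\{1,3,5\}$ or $\{1,3\}$, neither of which is wide, so Theorem \ref{th:main} does not apply directly to those subspaces. The resolution is to run the theorem with a \emph{larger} wide $\rho$ (one that contains the desired real coordinates plus enough extra imaginary directions to make $\dim_\c(\mathrm{span}_\c(\r^\rho))\ge2$), obtain properness of $\Pi_\rho\circ Y$ into the cylinder $\Ccal_\rho(\cdot)$, and then argue that this still forces $\mathrm{Re}(Y)$ (resp. $(\mathrm{Re}(Y_1),\mathrm{Re}(Y_2))$) to be proper into $\Omega$: indeed properness into a cylinder over $\Omega$ means preimages of compacts of $\Omega$ are compact, which is exactly what we want. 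Verifying that the completeness furnished by Theorem \ref{th:main} is genuine completeness of $Y$ — and hence of any conformal reparametrization-free real part — is immediate since $ds_Y^2=\doble{dY,dY}$ dominates the pulled-back Euclidean metric of any linear projection only up to the obstruction that projections can be degenerate; but completeness of $Y$ itself is all that is asserted in (a),(c) and suffices, while for (b),(e) one uses that $\mathrm{Re}(Y)$ is a conformal minimal immersion with $ds^2_{\mathrm{Re}(Y)}$ comparable to $ds_Y^2$, so completeness passes through. The remaining verifications (strict convexity of $\Lambda$, smallness of $r$, existence of the seed null curve) are routine using Remark \ref{rem:nulidad} and Theorem \ref{th:mink}.
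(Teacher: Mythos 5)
Your treatment of parts (a) and (c) matches the paper, but the ``wideness bookkeeping'' that drives the rest of your argument rests on a misreading of the definition. A sequence $\rho$ is wide as soon as it has length $n\geq 2$ and is not one of the three pairs $\{1,2\},\{3,4\},\{5,6\}$; equivalently, $\dim_\c({\rm span}_\c(\r^\rho))\geq 2.$ Both $\{1,3\}$ and $\{1,3,5\}$ satisfy this: for instance ${\rm span}_\c(\r^{\{1,3\}})=\{(z_1,z_2,0)^T\,|\,z_1,z_2\in\c\}$ has complex dimension $2$ (a real coordinate plane fails only when it is $J$-invariant, i.e. a complex coordinate line, which is exactly the excluded case $\rho=\{2j-1,2j\}$). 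Consequently the paper applies Theorem \ref{th:main} directly with $\rho=\{1,3,5\}$ for (b) and $\rho=\{1,3\}$ for (d) and (e) (seeding it in every case with an explicit linear null curve restricted to the preimage of a convex body, so that \eqref{eq:theorem} holds), and the properness of precisely the real projections you need comes straight out of the theorem; for (d) and (e) one then takes $F=\Pi_{\hat\rho}\circ Y$ with $\hat\rho=\{1,2,3,4\}$, resp. $\hat\rho=\{1,3,5\}.$

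Your substitute route for (b), (d), (e) --- run the theorem with a strictly larger wide $\rho'$ containing the desired real coordinates and then project further --- has a genuine gap. If you feed Theorem \ref{th:main} the convex cylinder over $\Omega$ inside $\r^{\rho'}$ (e.g. $\Omega\times\r$ in $\r^{\{1,2,3\}}$ for (e), or the solid cylinder in $\c^2$ for (d)), you only obtain that $\Pi_{\rho'}\circ Y$ is proper into that cylinder. The preimage of a compact $K\subset\Omega$ under the composed map is the $(\Pi_{\rho'}\circ Y)$-preimage of the cylinder over $K$, which is closed but noncompact; a divergent sequence in $N$ whose image escapes along the extra imaginary direction while its $\Omega$-projection stays in $K$ is perfectly compatible with the conclusion of the theorem. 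So your assertion ``properness into a cylinder over $\Omega$ means preimages of compacts of $\Omega$ are compact'' is false, and as written (b), (d), (e) are not proved. The fix is simply to use the correct notion of wide and take $\rho=\{1,3,5\}$, resp. $\{1,3\}$, directly. Your remaining points --- completeness of the projections via $ds^2_{{\rm Re}(Y)}=\tfrac12 ds^2_Y$ and $|dY_3|^2\leq|dY_1|^2+|dY_2|^2$, vanishing flux from exactness of $dY$, the construction of the seed $(M,X)$, and homeomorphism type plus hyperbolicity of $N$ --- are fine and agree with the paper.
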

\begin{proof} Consider a bounded regular strictly convex domain $\Lambda$ with $\overline{\Lambda}\subset \Omega,$ and fix $r\in (0,1/\kappa(\Lambda))$ and $\xi>0.$   Set $\rho=\{1,2,3,4,5,6\},$ $\{1,3,5\}$ $\{1,2,3,4\},$ $\{1,3\}$ and $\{1,3\}$  in item (a), (b), (c), (d) and (e), respectively. In each case $\Omega \subset \r^\rho$ and we label $\Dcal=\Ccal_\rho(\Lambda).$

Let $H:\c\to\c^3$ be the properly embedded null curve given by $H(z)=(i z,z,\sqrt{2} z)^T,$ and note that $M:=H^{-1}(\Dcal_{-r/2})$ is a  a closed disc with  $H(\partial M) \subset \Dcal-\overline{(\Dcal)_{-r}}.$ Without loss of generality we can assume that $M \subset \Ncal.$ 

Let $Y:N \to \c^3$ be the null curve arising from Theorem \ref{th:main} for the data $\rho,$ $\Omega,$ $\Lambda,$ $r,$ $M,$ $X=H|_M$ and $\xi.$  The immersion $F=\Pi_\rho \circ Y$ solves items (a), (b) and (c). For items (d) and (e) choose $F=\Pi_{\hat{\rho}}\circ Y,$ where $\hat{\rho}=\{1,2,3,4\}$ and $\hat{\rho}=\{1,3,5\},$ respectively.
\end{proof}

A null curve $Z:M\to {\rm SL}(2,\c)$ is bounded and complete if an only if so is its Bryant's projection $\mathcal{B}(Z)=Z \cdot \bar{Z}^T.$  Furthermore, if $F:M\to \c^3$ is a complete bounded null curve such that $\overline{F(M)}\cap\{z_3=0\}=\emptyset,$ then $\mathcal{T}\circ F:M\to {\rm SL}(2,\c)$ is a complete bounded null curve as well (see \eqref{eq:corres} and \cite{muy1}).

%Likewise, the 3-dimensional de Sitter space $\s^3_1$ can be identified with the set $\s^3_1=\{AI_0A^*\,|\, A\in{\rm SL}(2,\c),$ where $I_0=\left(\begin{matrix} 1 & 0\\ 0 & -1\end{matrix}\right).$ If $Z:M\to {\rm SL}(2,\c)$ is a null curve then $ZI_0Z^*:M\to\s_1^3$ is said to be a {\em CMC-1 face} and it is a spacelike immersion with constant mean curvature $1$ and singularities. A CMC-1 face $X=ZI_0Z^*:M\to \s^3_1$ is said to be {\em weakly complete} if $Z$ is complete (see \cite{fruyy}). 

\begin{corollary}\label{co:bryant}
The following assertions hold:
\begin{itemize}
\item There exist a domain $N\subset \Ncal$ homeomorphic to $\Ncal$ and a complete bounded null curve $Z:N\to{\rm SL}(2,\c).$ 
\item There exist a domain $N\subset \Ncal$ homeomorphic to $\Ncal$ and a conformal complete bounded CMC-1 immersion $X:N\to\h^3.$
\end{itemize}
\end{corollary}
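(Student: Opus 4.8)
The plan is to deduce Corollary \ref{co:bryant} from Corollary \ref{co:main}(a) by exploiting the correspondence $\mathcal{T}$ in \eqref{eq:corres} and Bryant's projection $\mathcal{B},$ exactly as indicated in the remarks preceding the statement. The only subtlety is that $\mathcal{T}$ is defined on $\c^3-\{z_3=0\}$ and is a biholomorphism onto $\{(a_{ij})\in{\rm SL}(2,\c)\,|\,a_{11}\neq 0\},$ so in order to compose with a complete bounded null curve $F$ we must guarantee that the image of $F$ stays uniformly away from the hyperplane $\{z_3=0\}.$ This is arranged by a translation: the domain on which we apply Corollary \ref{co:main}(a) need not contain the origin.

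First I would fix a convex domain $\Omega\subset\c^3$ which is bounded and satisfies $\overline{\Omega}\cap\{z_3=0\}=\emptyset$; for instance a small ball centered at a point of the form $(0,0,c)$ with $c>0$ large compared to its radius. By Corollary \ref{co:main}(a) applied to $\Omega,$ there exist a domain $N\subset\Ncal$ homeomorphic to $\Ncal$ and a complete proper null curve $F:N\to\Omega.$ In particular $F$ is bounded (since $\Omega$ is bounded) and $\overline{F(N)}\subset\overline{\Omega}$ is disjoint from $\{z_3=0\}.$ By the observation recalled just before the statement (see \eqref{eq:corres} and \cite{muy1}), the map $Z:=\mathcal{T}\circ F:N\to{\rm SL}(2,\c)$ is then a null curve; it is bounded because $\mathcal{T}$ is continuous and $\overline{F(N)}$ is a compact subset of the domain of $\mathcal{T},$ so $\mathcal{T}(\overline{F(N)})$ is a compact, hence bounded, subset of ${\rm SL}(2,\c).$ Completeness of $Z$ follows since $\mathcal{T}$ is a biholomorphism and $\overline{F(N)}$ lies in a compact subset $\c^3-\{z_3=0\}$ on which $\mathcal{T}$ is bi-Lipschitz for the respective Hermitian metrics; thus the metrics $ds_F^2$ and $ds_Z^2$ are uniformly comparable on $N,$ and completeness of $ds_F^2$ yields completeness of $ds_Z^2.$ This proves the first item.

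For the second item, set $X:=\mathcal{B}(Z)=Z\cdot\bar Z^T:N\to\h^3.$ As recalled in Section \ref{sec:intro}, Bryant's projection sends null curves in ${\rm SL}(2,\c)$ to conformal CMC-1 immersions in $\h^3,$ and the pulled-back hyperbolic metric equals $\tfrac12 ds_Z^2$; hence $X$ is a conformal complete CMC-1 immersion. Boundedness of $X$ in $\h^3$ follows from boundedness of $Z$ in ${\rm SL}(2,\c)$ together with continuity of $\mathcal{B}$ (again a compact image argument), or directly from the fact, noted before the statement, that $Z$ is bounded and complete if and only if $\mathcal{B}(Z)$ is. Since $N$ is homeomorphic to $\Ncal,$ both items follow.

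I do not expect any essential obstacle here: the entire content is packaged in Corollary \ref{co:main}(a) and in the standard properties of $\mathcal{T}$ and $\mathcal{B}.$ The one point requiring a little care — and the reason the translated domain $\Omega$ is chosen away from $\{z_3=0\}$ — is ensuring $\overline{F(N)}$ stays in the domain of definition of $\mathcal{T},$ so that boundedness and completeness transfer through the biholomorphism; once $\overline{F(N)}$ is a compact subset of $\c^3-\{z_3=0\},$ the uniform comparison of metrics is automatic and the rest is routine.
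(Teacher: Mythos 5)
Your proposal is correct and follows essentially the same route as the paper: apply Corollary \ref{co:main}(a) to a Euclidean ball whose closure avoids $\{z_3=0\}$, then transfer boundedness and completeness through $\mathcal{T}$ and $\mathcal{B}$ using the facts recalled just before the statement. The extra details you supply (bi-Lipschitz comparison of metrics on the compact image) only flesh out what the paper delegates to \cite{muy1}.
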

\begin{proof} 
Use Corollary \ref{co:main}-(a) for an Euclidean ball  $\Omega$  in $\c^3$ whose closure is disjoint from $\{z_3=0\},$ and take into account the transformations $\mathcal{B}$ and $\mathcal{T}.$
\end{proof}

%Let $M$ be an open Riemann surface. A holomorphic map $F=(F_1,F_2,F_3):M\to\c^3$ is said to be a {\em Lorentzian null curve} if $F^*=(F_1,F_2,iF_3)$ is a null curve \cite{uy2}. A Lorentzian null curve $F$ is said to be {\em weakly complete} if $F^*$ is complete. Let $\r^3_1$ denote the 3-dimensional Lorentz-Minkowski space with signature $(+,+,-).$ If $F$ is a Lorentzian null curve then ${\rm Re}(F):M\to \r^3_1$ is a maximal immersion with lightlike singularities \cite{fls,uy2}. A maximal immersion $X:M\to\r^3_1$ is said to be {\em weakly complete} if the Riemannian metric $\doble{ dX,dX}$ is complete \cite{uy2}.
%
%The proof of the following corollary is similar to the one of Corollary \ref{co:main}.
%
%\begin{corollary}\label{co:lorentz}
%The following assertions hold:
%\begin{enumerate}[\rm (a)]
%\item For any convex domain $\Omega$ in $\c^3$ there exist a Riemann surface $N$ homeomorphic to $\Ncal$ and a weakly complete proper Lorentzian null curve $F:N\to\Omega.$
%\item For any convex domain $\Omega$ in $\r^3_1$ there exist a Riemann surface $N$ homeomorphic to $\Ncal$ and a weakly complete proper maximal immersion $F:N\to\Omega.$
%\item For any convex domain $\Omega$ in $\r^2$ there exist a Riemann surface $N_j$  homeomorphic to $\Ncal$ and a weakly complete maximal immersion $F^{(j)}=(F^{(j)}_1,F^{(j)}_2,F^{(j)}_3):N_j\to\r^3_1$  such that $(F^{(j)}_j,F^{(j)}_{j+1})(N_j)\subset\Omega$ and $(F^{(j)}_j,F^{(j)}_{j+1}):N_j\to\Omega$ is proper, $j=1,2.$
%\end{enumerate}
%\end{corollary}

%%%%%%%%
%%%%%%%%
%%%%%%%%

\section{Proof of Lemma \ref{lem:main}}\label{sec:lema}

The first step of the proof consists of constructing a special open covering $\Wgot$ of $\Dcal-\overline{\Dcal_{-r}}.$

Recall that  Remark \ref{re:pita} gives that  ${\rm dist}(p,\partial \hat{\mathcal D} \cap (p+\esca{\nu_\Dcal(p)}^\bot))\geq \sqrt{d^2+2 \frac{d}{\kappa(\mathcal{D})}}$ for all $p \in \Dcal-\overline{\Dcal_{-r}},$ where $d=\mbox{dist}(\partial \Dcal,\hat{\Dcal}).$ Take $\epsilon_1\in (0,\epsilon)$ small enough so that  
${\rm dist}(p,\partial \hat{\mathcal D} \cap (p+\esca{\nu_\Dcal(p)}^\bot)> \sqrt{\frac{d}{\kappa(\mathcal{D})}}+\epsilon_1$ for all $p \in \Dcal-\overline{\Dcal_{-r}}.$ Then, by a continuity argument and equation \eqref{eq:dista}, there exists an open neighborhood $V_p$ of $p$ in $\Dcal-\overline{\Dcal_{-r}}$ such that
\begin{equation} \label{eq:comple1}
\text{$\hat{V}_p \cap \overline{\mathcal{D}_{-r}}=\emptyset$ and  ${\rm dist}(q,\partial \hat{\mathcal D} \cap \hat{V}_p)>\sqrt{\frac{d}{\kappa(\mathcal{D})}}+\epsilon_1$  for all $q \in V_p$,}
\end{equation}
 where $\hat{V}_p=\cup_{(q_1,q_2) \in V_p\times V_ p} (q_1+\esca{\nu_\Dcal(q_2)}^\bot).$

Label $\Theta^* =\Theta \cap \s^5,$ and consider the continuous function $\s^5 \times \Theta^* \to [-1,1],$ $(\sigma,\theta)\mapsto \esca{\sigma,\theta}.$ Remark \ref{rem:nulidad} implies that $\mu_\sigma:=\max\{\esca{\sigma,\theta} \,|\, \theta \in \Theta^*\}>0$ for all $\sigma \in \s^5.$ Label $\mu=\min\{\mu_\sigma\,|\, \sigma \in \s^5 \}>0,$ and for each $p \in \Dcal-\overline{\Dcal_{-r}}$ choose $\theta_p \in \Theta^*$ and an open  neighborhood $U_p$ of $p$ in $\Dcal-\overline{\Dcal_{-r}}$ so that 
\begin{equation} \label{eq:up}
\text{$\esca{\nu_{\Dcal}(q),\theta_p}>\mu/2$ for all $q \in U_p.$}
\end{equation}
 
Finally call
\[
W_p=V_p \cap U_p\subset \Dcal-\overline{\Dcal_{-r}}
\]
for all $p \in \Dcal-\overline{\Dcal_{-r}},$ and set  $\Wgot=\{W_{p}\,|\, p \in \Dcal-\overline{\Dcal_{-r}}\}.$

In the second step of the proof, we are going to describe some subsets of $\Ncal-M^\circ$ and $\curvo{ \,,\,}$-conjugate bases in $\c^3$ associated to the open covering $\Wgot.$

Denote by $\alpha_1,\ldots,\alpha_k$ the connected components of $\partial M.$ For each $m \in \n$ let $\z_m=\{0,1,\ldots,m-1\}$  denote the additive cyclic group of integers modulus $m.$ From \eqref{eq:lema}, $\Wgot$ is an open covering of $F(\partial M),$ and so there exist $m \in \n,$ $m \geq 3,$  and a collection $\{\alpha_{i,j} \,|\, (i,j)\in \{1,\ldots,k\}\times \z_m\}$ such that for any $i \in \{1,\ldots,k\}:$

\begin{itemize}
\item  $\cup_{j=1}^{m} \alpha_{i,j}= \alpha_i,$  
\item $\alpha_{i,j}$ and $\alpha_{i,j+1}$ have a common endpoint $Q_{i,j}$ and are otherwise disjoint for all  $j\in \z_m,$ and
\item there exists  $W_{{i,j}}\in \Wgot$ such that  $F(\alpha_{i,j}\cup \alpha_{i,j+1})  \subset W_{{i,j}},$ for all  $j\in \z_m.$
\end{itemize}

Since $\Dcal$ is $\curvo{ \,,\, }$-regular, then we can find $p_{i,j}\in W_{i,j-1} \cap W_{i,j}$ such that $\esca{e_{i,j}}^\bot$ is not $\curvo{ \,,\,}$-degenerate, where 
\begin{equation}\label{eq:eijpij}
e_{i,j}= \nu_{\mathcal{D}}(p_{i,j}),\quad j\in \z_m
\end{equation}
(see Figure \ref{fig:alphas}).
\begin{figure}[ht]
    \begin{center}
    \scalebox{0.8}{\includegraphics{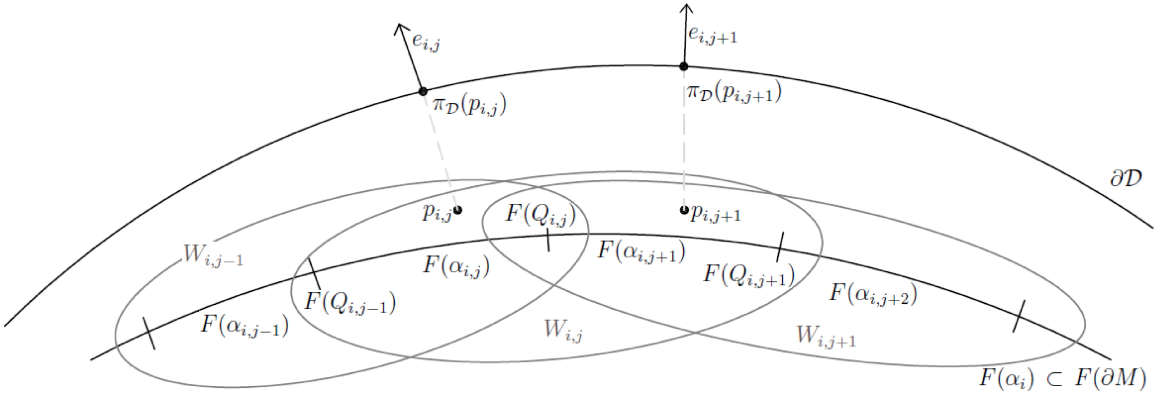}}
        \end{center}
        \vspace{-0.5cm}
\caption{The sets $W_{i,j}.$}\label{fig:alphas}
\end{figure}
Since $e_{i,j}$ is not null (see the discussion preceding Definition \ref{de:regular}), we can define 
$w_{i,j}=\overline{e_{i,j}}/\curvo{ \overline{e_{i,j}},\overline{e_{i,j}}},$ for all $i,$ $j,$
and notice that 
$%\begin{equation}\label{eq:igual}
\doble{ e_{i,j} }^\bot=\curvo{ w_{i,j}}^\bot.
$ %\end{equation}
 Since $w_{i,j}$ is not null, we can take $u_{i,j},$ $v_{i,j} \in \curvo{ w_{i,j}}^\bot$ such that  $\{u_{i,j},v_{i,j},w_{i,j}\}$ is  $\curvo{ \,,\,}$-conjugate in $\c^3.$  Denote by $A_{i,j}$ the complex orthogonal matrix 
\begin{equation}\label{eq:defA}
A_{i,j}=(u_{i,j},v_{i,j},w_{i,j})^{-1}, %and call $F_{i,j}\in {\sf N}(M)$ as the null curve given by $F_{i,j}=A_{i,j} \cdot F,$ 
\quad i\in \{1,\ldots,k\},\; j\in \z_m.
\end{equation}

Since $W_{i,j} \in \Wgot,$ \eqref{eq:up} yields  $\theta_{i,j} \in \Theta^*$ so that $\esca{\nu_{\Dcal}(q),\theta_{i,j}}>\mu/2$ for all $q \in W_{i,j}.$ In particular, 
\begin{equation}\label{eq:compa}
\text{$\esca{e_{i,j},\theta_{i,j}}$  and $\esca{e_{i,j+1},\theta_{i,j}}$ are both positive,  $i=1,\ldots,k,$ $j \in \z_m.$} 
\end{equation}
Notice that $\theta_{i,j}$ points to the exterior of $\Dcal$ at $\pi_\Dcal(q)$ for all $q \in W_{i,j}$ (recall that $\nu_\Dcal$ is the outward pointing normal on $\partial \Dcal$).

Let $\Rcal$ be a tubular neighborhood of $M$ in $\Ncal,$ and let $C_1,\ldots,C_k$ denote the finite collection of open annuli in $\Rcal-M,$  where  up to relabeling  $\alpha_i \subset \partial C_i,$ $i=1,\ldots,k.$ 

Let $\{r_{i,j} \subset \Rcal\,|\,i=1,\ldots,k, \; j\in \z_m\}$ be a collection of pairwise disjoint analytical Jordan arcs in $\Rcal$ such that $r_{i,j}$ has initial point $Q_{i,j}$ and $r_{i,j}-\{Q_{i,j}\} \subset C_i$ for all $i$ and $j.$ Label $T_{i,j}$ as the final point of $r_{i,j},$  and split $r_{i,j}$ into two subarcs   $s_{i,j}$ and $t_{i,j},$ where $Q_{i,j} \in s_{i,j}$ and $T_{i,j} \in t_{i,j}.$ In addition, choose these arcs so that  $S=M \cup(\cup_{i,j} r_{i,j})$ is admissible. See Figure \ref{fig:riemann}.

The third step of the proof deals with the existence of $H\in {\sf N}(\Rcal)$ {\em satisfying} the theses of the lemma {\em just on} $S.$ Roughly speaking, properties (a1), (a2), (a3) and (a4) in the following claim can be understood as the {\em restriction} to $S$ of (iv), (v), (iii) and (ii) in Lemma \ref{lem:main}, respectively.

\begin{claim} \label{cl:antena}
There exists $H\in{\sf N}(\Rcal)$ such that for any $(i,j) \in \{1,\ldots,k\} \times \z_m:$
\begin{enumerate}[\rm ({a}1)]
\item  $H(s_{i,j-1} \cup \alpha_{i,j}\cup s_{i,j}\cup  \alpha_{i,j+1} \cup s_{i,j+1})  \subset W_{{i,j}},$ 
\item if $J \subset s_{i,j}$ is a Borel measurable subset, then 
\begin{multline*} \label{eq:comple2}
\min\{L((A_{i,j} \cdot H|_{J})_3),L((A_{i,j+1} \cdot H|_{J})_3)\}+\\ \min\{L((A_{i,j} \cdot H|_{s_{i,j}-J})_3),L((A_{i,j+1} \cdot H|_{s_{i,j}-J})_3)\} > \max\{\|A_{i,j}\|,\|A_{i,{j+1}}\|\}\left(\sqrt{\frac{d}{\kappa(\mathcal{D})}}+\epsilon_1\right), 
\end{multline*}
where $(\,\cdot\,)_3$ means third (complex) coordinate and $L$ Euclidean length in $\c.$
\item $\esca{H(P)-H(Q_{i,j}),e_{i,j}},$  $\esca{H(P)-H(Q_{i,j}),e_{i,j+1}}>0$ for all $P \in t_{i,j},$ and $((H(T_{i,j})+\esca{e_{i,j}}^\bot)\cup (H(T_{i,j})+ \esca{e_{i,j+1}}^\bot)) \cap \overline{\Dcal_{1/\epsilon_1}}=\emptyset,$ and 
\item $\|H-F\|_1<\epsilon_1/(1+km)$ on $M.$
\end{enumerate}
\end{claim}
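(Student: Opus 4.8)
\textbf{Plan of proof for Claim \ref{cl:antena}.}

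The strategy is to build $H$ by a finite recursion that deforms $F$ first on the arcs $s_{i,j}$ (to gain intrinsic length, item (a2)), then on the arcs $t_{i,j}$ (to push the endpoints $H(T_{i,j})$ far out, item (a3)), while keeping the $\mathcal{C}^0$-control (a1) and (a4) intact. The enabling tool is Lemma \ref{lem:runge}: any generalized null curve on the admissible set $S=M\cup(\cup_{i,j}r_{i,j})$ can be uniformly $\mathcal{C}^1$-approximated on $S$ by genuine null curves in ${\sf N}(W)$, so it suffices to produce a \emph{generalized} null curve $G$ on $S$ with the required geometric features and then let $H$ be a close enough null-curve approximation. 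Note that $\mathcal{C}^1$-closeness transfers the open conditions (a1), (a3) and (a4), while (a2) is an open condition on $\int|dG_3|$ along the arcs and hence also survives a small $\mathcal{C}^1$-perturbation.

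First I would treat the length condition (a2) arc by arc. Fix $(i,j)$ and work in the coordinates given by $A_{i,j}$ and $A_{i,j+1}$; the third coordinate in the $A_{i,j}$-frame is essentially the null direction $w_{i,j}$, and since $\{u_{i,j},v_{i,j},w_{i,j}\}$ is $\curvo{\,,\,}$-conjugate, a generalized null curve may be prescribed on $s_{i,j}$ by choosing its $u,v$-components freely (subject to nullity) and letting the $w$-component absorb the constraint. Concretely, along $s_{i,j}$ I would add to $F$ a highly oscillating null perturbation supported near $s_{i,j}$ — of the Weierstrass form in Remark \ref{rem:nulidad} with a large parameter $N$ — arranged so that $L((A_{i,j}\cdot G|_{s_{i,j}})_3)$ and $L((A_{i,j+1}\cdot G|_{s_{i,j}})_3)$ are both larger than $2\max\{\|A_{i,j}\|,\|A_{i,j+1}\|\}(\sqrt{d/\kappa(\mathcal{D})}+\epsilon_1)$; then for any Borel splitting $s_{i,j}=J\cup(s_{i,j}-J)$ the sum of the two minima in (a2) exceeds the required bound. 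Because the perturbation is concentrated near the arc and of controlled amplitude (its $\c^3$-size $\to 0$ as $N\to\infty$, only its derivative blows up), the $\mathcal{C}^0$-containment (a1) on the $\alpha$'s and the estimate (a4) on $M$ are preserved. Doing this independently, one pair $(i,j)$ at a time (the arcs $s_{i,j}$ are pairwise disjoint and disjoint from $M$), yields a generalized null curve $G_1$ on $M\cup(\cup s_{i,j})$ satisfying (a1), (a2), (a4).

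Next I would extend $G_1$ along each $t_{i,j}$ to arrange (a3). By \eqref{eq:compa}, $\esca{e_{i,j},\theta_{i,j}}$ and $\esca{e_{i,j+1},\theta_{i,j}}$ are positive, so moving along $t_{i,j}$ in the fixed null direction $\theta_{i,j}\in\Theta^*$ — i.e. setting $dG=\lambda(P)\,\theta_{i,j}\,dz$ for a positive weight $\lambda$ on $t_{i,j}$ — makes $\esca{G(P)-G(Q_{i,j}),e_{i,j}}$ and $\esca{G(P)-G(Q_{i,j}),e_{i,j+1}}$ strictly increasing, hence positive for $P\in t_{i,j}$; taking the total $\int_{t_{i,j}}\lambda$ large enough pushes $G(T_{i,j})$ a prescribed Euclidean distance in the direction $\theta_{i,j}$, and since $\kappa_{\Dcal}(\cdot)\ge 0$ and $\theta_{i,j}$ realizes $f(\nu_\Dcal(q),\theta_{i,j})>\mu/2>0$ on $W_{i,j}$, for large enough displacement the affine subspaces $G(T_{i,j})+\esca{e_{i,j}}^\bot$ and $G(T_{i,j})+\esca{e_{i,j+1}}^\bot$ are disjoint from the bounded set $\overline{\Dcal_{1/\epsilon_1}}$. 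This uses only the geometry of half-spaces supporting the nested convex domains. Patching these prescriptions on the disjoint arcs $t_{i,j}$ onto $G_1$ gives a smooth generalized null curve $G$ on all of $S$; it is automatically smooth in the sense of Definition's requirements along the $r_{i,j}$ since the pieces $s_{i,j},t_{i,j}$ are analytical subarcs meeting at the single point $T_{i,j}$, and the nullity and nondegeneracy of $\doble{dG,dG}$ hold because we only ever added null vectors of the form $z(1-w^2)/2,\dots$ with $z\ne0$. Finally, apply Lemma \ref{lem:runge} to $G$, $S$, and the tubular neighborhood $W$ of $S$ to obtain $H\in{\sf N}(W)$ with $\|dH-dG\|$ and $\|H-G\|$ as small as desired on $S$; choosing this $\mathcal{C}^1$-approximation fine enough preserves the open conditions (a1), (a2), (a3), and makes (a4) hold since $\|G-F\|$ is already $<\epsilon_1/(2(1+km))$ on $M$ by construction.

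The main obstacle I expect is bookkeeping the interaction between the two frames $A_{i,j}$ and $A_{i,j+1}$ associated to the \emph{same} arc $s_{i,j}$: the length gain (a2) must be simultaneous in both null coordinates and robust under \emph{any} Borel splitting of the arc. The standard device — a single oscillating null perturbation whose image spirals so that its projection to \emph{either} null direction is long on every subarc of comparable length — handles this, but it requires choosing the oscillation frequency after, and depending on, both $A_{i,j}$ and $A_{i,j+1}$ and the (fixed, finite) geometric constants, and then verifying the ``min + min'' inequality uniformly; this is the only genuinely quantitative step. Everything else is either a direct application of Lemma \ref{lem:runge} or an elementary convexity/continuity estimate already prepared in \eqref{eq:comple1}, \eqref{eq:up}, \eqref{eq:compa}.
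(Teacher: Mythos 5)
Your overall architecture is the same as the paper's: build a generalized null curve on the admissible set $S=M\cup(\cup_{i,j}r_{i,j})$ — an oscillating null piece along each $s_{i,j}$ to create third-coordinate length, then a straight push along the null direction $\theta_{i,j}$ on $t_{i,j}$ (using \eqref{eq:compa} to get the positivity in (a3) and to send the affine planes through $H(T_{i,j})$ past a supporting half-space of $\overline{\Dcal_{1/\epsilon_1}}$) — and finally invoke Lemma \ref{lem:runge} together with the openness of (a1)--(a4). That part is fine.

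The genuine gap is in your treatment of (a2). In the body of your argument you reduce (a2) to making \emph{both total} lengths $L((A_{i,j}\cdot G|_{s_{i,j}})_3)$ and $L((A_{i,j+1}\cdot G|_{s_{i,j}})_3)$ larger than twice the target constant, and you then assert that the ``min $+$ min'' inequality follows for every Borel splitting $s_{i,j}=J\cup(s_{i,j}-J)$. This implication is false: if the oscillation passes through null directions whose third coordinate is large in the $A_{i,j}$-frame but small in the $A_{i,j+1}$-frame and vice versa, one can choose $J$ to capture essentially all of the frame-$(i,j)$ length and almost none of the frame-$(i,j+1)$ length, and the complement to do the opposite, making $\min\{L((A_{i,j}\cdot H|_{J})_3),L((A_{i,j+1}\cdot H|_{J})_3)\}+\min\{L((A_{i,j}\cdot H|_{s_{i,j}-J})_3),L((A_{i,j+1}\cdot H|_{s_{i,j}-J})_3)\}$ arbitrarily small while both totals are huge. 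You flag this as the delicate point, but the device you sketch (``projection to either null direction is long on every subarc of comparable length'') does not resolve it either, since $J$ is an arbitrary Borel set, not a subarc. What is needed — and what the paper does — is a \emph{pointwise simultaneous} lower bound: the zig-zag on $s_{i,j}$ is taken along one fixed null vector $\lambda_{i,j}\in\Theta$ chosen (via Remark \ref{rem:nulidad}) with $(A_{i,j}(\lambda_{i,j}))_3\neq 0$ and $(A_{i,j+1}(\lambda_{i,j}))_3\neq 0$, and with the segment $F(Q_{i,j})+[0,1]\lambda_{i,j}$ inside $W_{i,j-1}\cap W_{i,j}\cap W_{i,j+1}$. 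Then $|d(A_{i,j}\cdot G)_3|$ and $|d(A_{i,j+1}\cdot G)_3|$ are both constant multiples of the parameter speed along the whole zig-zag, so each of the two minima is additive in the Borel partition and the sum equals $\min\{|(A_{i,j}(\lambda_{i,j}))_3|,|(A_{i,j+1}(\lambda_{i,j}))_3|\}$ times the total traversal, which is made larger than $\max\{\|A_{i,j}\|,\|A_{i,j+1}\|\}\bigl(\sqrt{d/\kappa(\Dcal)}+\epsilon_1\bigr)$ by taking the number $N$ of traversals large (this is exactly \eqref{eq:a2}); the estimate then survives the smoothing and the $\mathcal{C}^1$-approximation of Lemma \ref{lem:runge}, as you correctly observe. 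A minor additional caveat: $\overline{\Dcal_{1/\epsilon_1}}$ need not be bounded (in the application $\Dcal$ is a cylinder), so in (a3) you must argue with the supporting half-spaces $\{\esca{\cdot\,,e_{i,j}}\le {\rm const}\}$ containing $\overline{\Dcal_{1/\epsilon_1}}$ — which you mention — rather than with boundedness.
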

\begin{proof} Let $r_{i,j}(u),$ $u\in [0,1],$ be a smooth parameterization of $r_{i,j}$  so that $r_{i,j}([0,1/2])=s_{i,j}$ and $r_{i,j}([1/2,1])=t_{i,j}.$ Let  $\lambda_{i,j} \in \Theta$ be a null vector so that $(A_{i,j}(\lambda_{i,j}))_3,$ $(A_{i,j+1}(\lambda_{i,j}))_3\neq 0$ (see Remark \ref{rem:nulidad}) and the segment $\{\Lambda_{i,j}(s):=F(Q_{i,j})+s \lambda_{i,j}\,|\, s \in [0,1]\}$ lies in the interior of $W_{i,j-1} \cap W_{i,j}\cap W_{i,j+1},$ and set $\Lambda_{i,j}^*(s)=\Lambda_{i,j}(1-s),$ $s \in [0,1].$ Take $N \in \n$ large enough so that 
\begin{equation}\label{eq:a2}
2 N \min\{|(A_{i,j}(\lambda_{i,j}))_3|,|(A_{i,j+1}(\lambda_{i,j}))_3|\}>\max\{\|A_{i,j}\|,\|A_{i,j+1}\|\}\left(\sqrt{\frac{d}{\kappa(\mathcal{D})}}+\epsilon_1\right)
\end{equation}
 and 
\begin{equation} \label{eq:a3}
((F(Q_{i,j})+N \theta_{i,j}+\esca{e_{i,j}}^\bot)\cup (F(Q_{i,j})+N \theta_{i,j}+ \esca{e_{i,j+1}}^\bot)) \cap \overline{\Dcal_{1/\epsilon_1}}=\emptyset.
\end{equation}
For \eqref{eq:a3} take into account \eqref{eq:compa}.  Set $d_{i,j}:[0,1] \to \c^3,$ 
\begin{itemize}
\item $d_{i,j}(u)=\Lambda_{i,j}(8 N u-b+1)$ if $u \in [\frac{b-1}{8N},\frac{b}{8 N}]$ and  $b\in \{1,\ldots,2 N\}$ is odd,
\item $d_{i,j}(u)=\Lambda_{i,j}^*(8 N u-b+1)$ if $u \in [\frac{b-1}{8N},\frac{b}{8 N}]$ and  $b\in \{1,\ldots,2 N\}$ is even,
\item $d_{i,j}(u)=F(Q_{i,j})+ \xi_{i,j} (4 u-1)\theta_{i,j}$ if $u \in [1/4,1/2],$ where $\xi_{i,j}>0$  is small enough so that $d_{i,j}([1/4,1/2]) \subset W_{i,j-1}\cap  W_{i,j}\cap W_{i,j+1},$ and
\item $d_{i,j}(u)=F(Q_{i,j})+(\xi_{i,j}+ N (2 u-1))\theta_{i,j}$ if $u \in [1/2,1].$
\end{itemize}
The curves $d_{i,j}$ are continuous,  weakly differentiable and satisfy that $\curvo{d_{i,j}'(u),d_{i,j}'(u)}=0.$ Then, up to replacing $H|_{r_{i,j}}$ for $d_{i,j}$ for all $i,$ $j,$ items (a1), (a2) and (a3) formally hold. To finish, approximate $d_{i,j}$ by a smooth curve $c_{i,j}$ matching smoothly with $F$ at $Q_{i,j},$ and so that the map $\tilde{H}:S \to \c^3$  given by $\tilde{H}|_{M}=F,$ $\tilde{H}|_{r_{i,j}}(u)=c_{i,j}(u)$ for all $u \in [0,1],$ $i$ and $j,$ is a generalized null curve satisfying all the above items. Indeed, if $c_{i,j}$ is chosen close enough to $d_{i,j},$ (a1) is obvious, (a2) follows from \eqref{eq:a2}, and (a3) is an elementary consequence of \eqref{eq:a3}. 

The claim follows by a direct application of Lemma \ref{lem:runge} to $S,$  $\tilde{H}$ and $\Rcal.$ 
\end{proof}

To finish the proof of the lemma, we will suitably deform $H$ strongly on $\Rcal-S$ and hardly on $S.$ To do this, the introduction of some subsets in $\Rcal-M^\circ$ is required.
The fourth step of the proof is devoted to this issue.

Let $M_0$ be a compact region in $\Ncal$ such that $S-(\cup_{i,j} \{T_{i,j}\})\subset M_0^\circ\subset M_0\subset \Rcal,$ $\cup_{i,j} \{T_{i,j}\}\subset \partial M_0$  and $M_0-M^\circ$ consists of $k$ pairwise disjoint compact annuli $\hat{C}_1,\ldots,\hat{C}_k,$ where $\hat{C}_i \subset \overline{C_i}.$ Denote by $\Omega_{i,j}$ the closed disc in $M_0-M^\circ$ bounded by $\alpha_{i,j},$ $r_{i,j-1},$ $r_{i,j}$ and a piece, named $\beta_{i,j}$ of $\partial \hat{C}_i$ connecting $T_{i,j-1}$ and $T_{i,j}.$

Consider compact neighborhoods (with the topology of a closed disc) $\tilde{\alpha}_{i,j},$  $\tilde{s}_{i,j}$ and $\tilde{t}_{i,j}$ in $M_0-M^\circ$ of $\alpha_{i,j},$ $s_{i,j}$ and $t_{i,j},$ respectively,  $i=1,\ldots,k,$ $j \in \z_m,$  and satisfying the following properties (see Figure \ref{fig:riemann}):
\begin{enumerate}[({b}1)]
\item $\tilde{\alpha}_{i,j}\cap (\tilde{t}_{i,j-1} \cup \tilde{t}_{i,j})=\emptyset,$  $\tilde{\alpha}_{i,j}\cap (\tilde{s}_{i,a} \cup \tilde{t}_{i,a})=\emptyset,$ $a \neq j-1, j,$  $(\tilde{s}_{i,j-1}\cup\tilde{t}_{i,j-1}) \cap (\tilde{s}_{i,j}\cup\tilde{t}_{i,j})=\emptyset,$
\item $K_{i,j}:=\overline{\Omega_{i,j}-(\tilde{s}_{i,j-1}\cup \tilde{t}_{i,j-1}\cup \tilde{\alpha}_{i,j}\cup \tilde{s}_{i,j}\cup \tilde{t}_{i,j})}$ is a compact disc and $K_{i,j} \cap \beta_{i,j}$ is a Jordan arc disjoint from $\{T_{i,j-1},T_{i,j}\},$
\item  $H(\tilde{s}_{i,j-1}\cup \tilde{\alpha}_{i,j}\cup \tilde{s}_{i,j}\cup  \tilde{\alpha}_{i,j+1}\cup \tilde{s}_{i,j+1})  \subset W_{{i,j}},$ 

\item if $J \subset \tilde{s}_{i,j}$ is an arc connecting $\tilde{\alpha}_{i,j}\cup \tilde{\alpha}_{i,j+1}$ and $\tilde{t}_{i,j},$  $J_1=J \cap \Omega_{i,j}$ and $J_2=J \cap \Omega_{i,j+1},$ then 

\begin{equation*} \label{eq:comple2}
L((A_{i,j} \cdot H|_{J_1})_3)+L((A_{i,j+1} \cdot H|_{J_2})_3) > \max\{\|A_{i,j}\|,\|A_{i,{j+1}}\|\}\left(\sqrt{\frac{d}{\kappa(\mathcal{D})}}+\epsilon_1\right),
\end{equation*}
\item $\esca{H(P)-H(Q_{i,j-1}),e_{i,j}}>0$ $\forall P \in \tilde{t}_{i,j-1},$ $\esca{H(P)-H(Q_{i,j}),e_{i,j}}>0$ $\forall P \in \tilde{t}_{i,j},$  and $(H(Q)+\esca{e_{i,j}}^\bot)\cap \overline{\Dcal_{1/\epsilon_1}}=\emptyset$ $\forall Q\in (\tilde{t}_{i,j-1}\cup\tilde{t}_{i,j})\cap \partial M_0.$
\end{enumerate}

These choices are possible due to properties (a1), (a2) and (a3) and a continuity argument.

\begin{figure}[ht]
    \begin{center}
    \scalebox{0.55}{\includegraphics{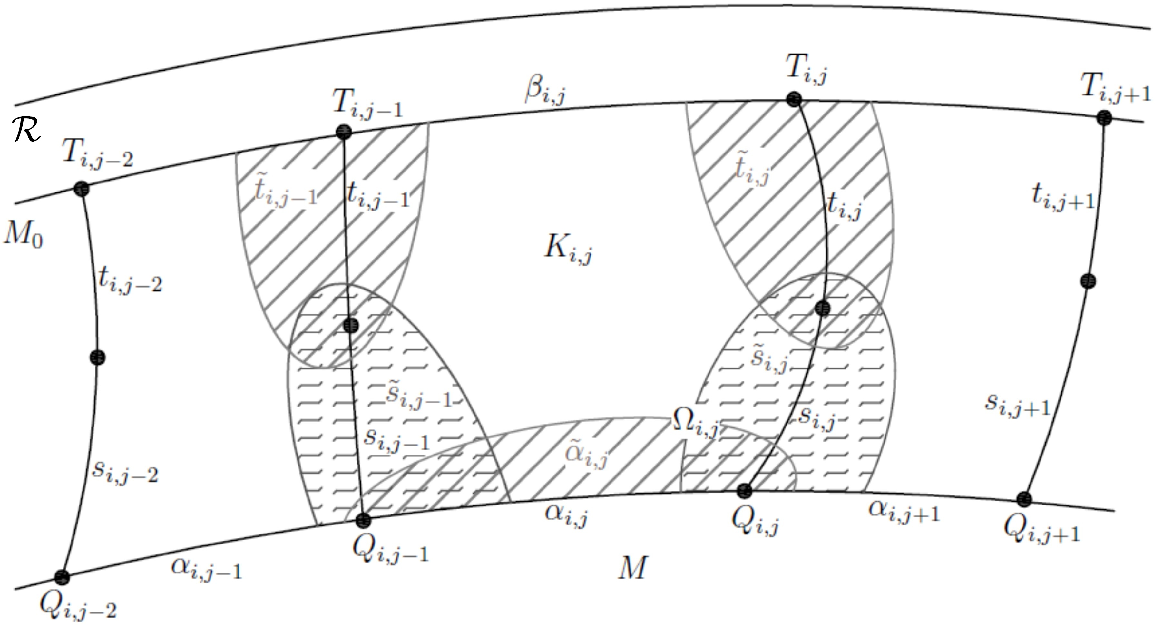}}
        \end{center}
        \vspace{-0.5cm}
\caption{$M_0$}\label{fig:riemann}
\end{figure}

The last step of the proof consists of {\em pushing} $H(K_{i,j})$ out of $\overline{\Dcal_{1/\epsilon_1}}$ in the direction of $\doble{e_{i,j}}^\bot$ for all $i$ and $j,$ hardly modifying $H$ on $M$ (see Claim \ref{cl:Hn} below). The choice of this direction allows to preserve the already achieved in properties (b4) and (b5), and taking also (b3) into account, to obtain properties (iii) and (iv) in the statement of Lemma \ref{lem:main}.

Let $\eta:\{1,\ldots,k m\}\to\{1,\ldots,k\}\times \z_m$ be the bijection $\eta(n)=(E(\frac{n-1}{m})+1,n-1),$ where $E(\cdot)$ means integer part. The process is enclosed in the following

\begin{claim}\label{cl:2ª}
There exists a sequence $H_0, H_1,\ldots, H_{km}$ of null curves in ${\sf N}(\Rcal)$ satisfying the following properties:
\begin{enumerate}[\rm ({c}1{$_n$})]
\item $H_n({s}_{\eta(j)-(0,1)}\cup \alpha_{\eta(j)}\cup {s}_{\eta(j)}\cup  \alpha_{\eta(j)+(0,1)}\cup {s}_{\eta(j)+(0,1)})  \subset W_{\eta(j)},$ $\forall j\in \{1,\ldots, km\},$

\item $H_n((\tilde{s}_{\eta(j)-(0,1)}\cup \tilde{\alpha}_{\eta(j)}\cup \tilde{s}_{\eta(j)})\cap \Omega_{\eta(j)})  \subset W_{\eta(j)}+\doble{ e_{\eta(j)} }^\bot,$ $\forall j\in\{1,\ldots, n\},$ $n \geq 1,$

\item $H_n((\tilde{s}_{\eta(j)-(0,1)}\cup \tilde{\alpha}_{\eta(j)}\cup \tilde{s}_{\eta(j)})\cap \Omega_{\eta(j)})  \subset W_{\eta(j)},$ $\forall j\in\{n+1,\ldots,km\},$

\item if $J \subset \tilde{s}_{\eta(j)}$ is an arc connecting $\tilde{\alpha}_{\eta(j)}\cup \tilde{\alpha}_{\eta(j)+(0,1)}$ and $\tilde{t}_{\eta(j)},$  $J_1 =J \cap \Omega_{\eta(j)}$ and $J_2 =J \cap \Omega_{\eta(j)+(0,1)},$ then
\begin{equation*}
L((A_{\eta(j)} \cdot H_n|_{J_1})_3)+ L((A_{\eta(j)+(0,1)} \cdot H_n|_{J_2})_3) > \max\{\|A_{\eta(j)}\|,\|A_{\eta(j)+(0,1)}\|\}\left(\sqrt{\frac{d}{\kappa(\mathcal{D})}}+\epsilon_1\right),
\end{equation*}
$\forall j\in \{1,\ldots, km\},$

\item $\esca{H_n(P)-H_n(Q_{\eta(j)}),e_{\eta(j)}}>0$ $\forall P \in \tilde{t}_{\eta(j)}\cap \Omega_{\eta(j)},$ $\forall j\in\{1,\ldots, km\},$ 

\item $\esca{H_n(P)-H_n(Q_{\eta(j)-(0,1)}),e_{\eta(j)}}>0$ $\forall P \in \tilde{t}_{\eta(j)-(0,1)}\cap \Omega_{\eta(j)},$ $\forall j\in\{1,\ldots, km\},$

\item $(H_n(Q)+\esca{e_{\eta(j)}}^\bot)\cap \overline{\Dcal_{1/\epsilon_1}}=\emptyset$ $\forall Q\in (\tilde{t}_{\eta(j)}\cup\tilde{t}_{\eta(j)-(0,1)})\cap \partial M_0\cap\Omega_{\eta(j)},$ $\forall j\in\{1,\ldots, km\},$

\item $H_n(K_{\eta(j)})\cap \overline{\Dcal_{1/ \epsilon_1}}=\emptyset,$ $\forall j\in\{1,\ldots, n\},$ $n\geq 1,$ and

\item $\|H_n-H_{n-1}\|_1<\epsilon_1/(km+1)$ on $\overline{M_0-\Omega_{\eta(n)}},$ $n\geq 1.$
\end{enumerate}
\end{claim}

Roughly speaking, the above properties mean that the null curve $H_n$ satisfies the theses of the lemma {\em just on} $M\cup (\cup_{j=1}^n\Omega_{\eta(j)}).$ Items (c9$_n$); (c7$_n$) and (c8$_n$); (c2$_n$), (c5$_n$) and (c6$_n$); and (c4$_n$) correspond to the restriction to $M\cup (\cup_{j=1}^n\Omega_{\eta(j)})$ of (ii); (iii); (iv); and (v), respectively. 
\begin{proof}[Proof of Claim \ref{cl:2ª}]
Take $H_0:=H.$ Notice that ({c}1{$_0$}), ({c}3{$_0$}), ({c}4{$_0$}), ({c}5{$_0$}), ({c}6{$_0$}) and ({c}7{$_0$}) follow from (b3), (b4) and (b5), whereas the remaining properties make no sense for $n=0.$

Reason by induction and assume we have constructed $H_0,\ldots,H_{n-1}$ satisfying all the above properties. The construction of $H_n$ is contained in the following

\begin{claim}\label{cl:Hn}
Let $\epsilon_0>0.$ There exists $H_n\in {\sf N}(\Rcal)$ such that
\begin{enumerate}[\rm ({d}1)]
\item $\|H_n- H_{n-1}\|_1< \epsilon_0 \|A_{\eta(n)}^{-1}\|$ on $\overline{M_0-\Omega_{\eta(n)}},$
\item $\doble{ H_n- H_{n-1},e_{\eta(n)}} \equiv 0,$ and
\item $H_n(K_{\eta(n)}) \cap (\overline{\Dcal_{1/\epsilon_1}})=\emptyset.$
\end{enumerate}
\end{claim}
\begin{proof}
Denote by $G=A_{\eta(n)}\cdot H_{n-1}\in{\sf N}(\Rcal)$ and write $G=(G_1,G_2,G_3)^T.$ 

Let $\gamma$ denote a Jordan arc in $\tilde{\alpha}_{\eta(n)}$ disjoint from $(\tilde{s}_{\eta(n)-(0,1)}\cup\tilde{t}_{\eta(n)-(0,1)}) \cup (\tilde{s}_{\eta(n)}\cup\tilde{t}_{\eta(n)}),$ with initial point $\Gamma_1\in\alpha_{\eta(n)}$ and final point $\Gamma_2\in\partial K_{\eta(n)}$ and otherwise disjoint from $\partial \tilde{\alpha}_{\eta(n)}.$ Choose $\gamma$ so that $dG_3|_{\gamma}$ never vanishes. Denote by $S_n$ the closure of $(M_0-\Omega_{\eta(n)})\cup \gamma\cup K_{\eta(n)},$ and without loss of generality assume that $S_n$ is admissible.

Recall that $A_{\eta(n)}\in \mathcal{O}(3,\c)$ implies that $A_{\eta(n)}(\doble{e_{\eta(n)}}^\bot)=\doble{\nu_{A(\Dcal)}(A_{\eta(n)}(\pi_\Dcal(p_{\eta(n)})))}^\bot$ (see \eqref{eq:eijpij} and \eqref{eq:ortoe}). From the definition of $A_{\eta(n)},$ $e_{\eta(n)}$ and $w_{\eta(n)}$ (see \eqref{eq:defA}), one has 
\[
A_{\eta(n)}(\doble{e_{\eta(n)}}^\bot)=A_{\eta(n)}(\curvo{w_{\eta(n)}}^\bot)= \{(z_1,z_2,0) \in \c^3\,|\, z_1,\,z_2 \in \c\}.
\]
Since $A_{\eta(n)} (\Dcal)$ is null strictly convex (see  Claim \ref{as:nsc})  and $ A_{\eta(n)} (\overline{\Dcal_{1/\epsilon_1}}) \subset  \overline{(A_{\eta(n)}(\Dcal))_{{\|A_{\eta(n)}\|}/{\epsilon_1}}},$ there exists a escaping vector $\zeta \in \mathcal{E}_{A_{\eta(n)}(\Dcal)} \cap \Theta \cap A_{\eta(n)}(\doble{ e_{\eta(n)}}^\bot)$ such that \begin{equation} \label{eq:fuera}
 \text{${\rm dist}(G(\Gamma_1)+ \zeta, A_{\eta(n)} (\overline{\Dcal_{1/\epsilon_1}}))>{\rm diam}(G(\gamma \cup K_{\eta(n)})),$}
\end{equation}
where ${\rm diam}(\cdot)$ means Euclidean diameter in $\c^3.$ 

Let $\gamma(u),$ $u \in [0,1],$ be a smooth parameterization of $\gamma$ with $\gamma(0)=\Gamma_1.$  Label  $\tau_j= \gamma([0,1/j])$ and consider the parameterization   $\tau_j(u)=\gamma(u/j),$ $u \in [0,1].$ Write $G_{3,j}(u)=G_3(\tau_j(u)),$  $u \in [0,1],$ and notice that $\frac{d G_{3,j}}{du}(0)=\frac1{j}\frac{d (G_{3}\circ \gamma)}{du}(0)$ for all $j \in \n.$

Since $e_{\eta(n)}$ is not null and $A_{\eta(n)}\in \mathcal{O}(3,\c),$ there is a null vector $\zeta^* \in A_{\eta(n)}(\doble{ e_{\eta(n)}}^\bot)$ so that $\{\zeta,\zeta^*\}$ is a basis of $A_{\eta(n)}(\doble{ e_{\eta(n)}}^\bot)$ and $\curvo{\zeta,\zeta^*} \neq 0.$

Set $\zeta_j=\zeta- \frac{(d G_{3,j}/du (0))^2}{2\curvo{\; \zeta,\zeta^*\;}} \zeta^*\in A_{\eta(n)}(\doble{ e_{\eta(n)}}^\bot),$ $j \in \n,$ and observe that $\lim_{j \to \infty} \zeta_j =\zeta$ and  $\curvo{ \zeta_j,\zeta_j}=-(\frac{d G_{3,j}}{du}(0))^2$ for all $j.$ Furthermore, by  \eqref{eq:fuera} we can also assume that ${\rm dist}(G(\Gamma_1)+\zeta_j, A_{\eta(n)} (\overline{\Dcal_{1/\epsilon_1}}))>{\rm diam}(G(\gamma \cup K_{\eta(n)}))$  for all $j \in \n.$

 Set $h_j:[0,1] \to \c^3,$  $h_j(u)=G(\Gamma_1)+ i \frac{G_{3,j}(u)-G_{3,j}(0)}{\curvo{ \;\zeta_j,\zeta_j\;}^{1/2}} \zeta_j + (0,0,G_{3,j}(u)-G_3(\Gamma_1)),$ and notice that $\curvo{ h_j'(u),h_j'(u) }=0$ and $\doble{ h_j'(u),h_j'(u) }$ never vanishes on $[0,1],$  $j\in \n.$ Up to choosing a suitable branch of $\curvo{ \zeta_j,\zeta_j}^{1/2},$ the sequence  $\{h_j\}_{j \in \n}$ converges uniformly on $[0,1]$ to $h_\infty:[0,1] \to \c^3,$  $h_\infty(u)=u \zeta+ G(\Gamma_1).$ Since ${\rm dist}(h_\infty(1), A_{\eta(n)} (\overline{\Dcal_{1/\epsilon_1}}))>{\rm diam}(G(\gamma \cup K_{\eta(n)}))$ (see equation \eqref{eq:fuera}), then  
there exists $j_0 \in \n$ such that 
\begin{equation} \label{eq:fuera1}
\text{${\rm dist}(h_{j_0}(1), A_{\eta(n)} (\overline{\Dcal_{1/\epsilon_1}}))>{\rm diam}(G(\gamma \cup K_{\eta(n)})).$}  
\end{equation}

Set $\hat{h}:\tau_{j_0} \to \c^3,$ $\hat{h}(P)=h_{j_0}(u(P)),$ where $u(P)\in [0,1]$ is the only value for which $\tau_{j_0}(u(P))=P.$   Let $\hat{G}=(\hat{G}_1,\hat{G}_2,\hat{G}_3): S_n\to \c^3$ denote the continuous map given by
$$\hat{G}|_{\overline{M_0-\Omega_{\eta(n)}}}=G|_{\overline{M_0-\Omega_{\eta(n)}}},\; \hat{G}|_{\tau_{j_0}}=\hat{h},\; \hat{G}|_{(\gamma-\tau_{j_0})\cup K_{\eta(n)}}=G|_{(\gamma-\tau_{j_0})\cup K_{\eta(n)}}-G(\tau_{j_0}(1))+\hat{h}(\tau_{j_0}(1)).$$
Notice that $\hat{G}_3=G_3|_{S_n}.$ 
The equation $\curvo{ d\hat{G},d \hat{G}}=0$ formally holds except at the points $\Gamma_1$ and $\tau_{j_0}(1)$ where smoothness could fail. 
 Up to smooth approximation,  $\hat{G}$ is a generalized null curve  satisfying that 
\[
\hat{G}|_{\overline{M_0-\Omega_{\eta(n)}}}=G|_{\overline{M_0-\Omega_{\eta(n)}}},\; \hat{G}_3=G_3|_{S_n},\; \hat{G}(K_{\eta(n)}) \cap A_{\eta(n)}(\overline{\Dcal_{1/\epsilon_1}})=\emptyset.
\]

Applying Lemma \ref{lem:runge} to $S_n,$ $\Rcal$ and $\hat{G},$ we get a null curve $Z=(Z_1,Z_2,Z_3)^T\in {\sf N}(\Rcal)$ such that
\begin{equation}\label{eq:propZ}
\|Z- A_{\eta(n)}\cdot H_{n-1}\|_1<\epsilon_0 \text{ on }\overline{M_0-\Omega_{\eta(n)}},\;
Z_3=(A_{\eta(n)}\cdot H_{n-1})_3,\;
Z(K_{\eta(n)}) \cap A_{\eta(n)}(\overline{\Dcal_{1/\epsilon_1}})=\emptyset.
\end{equation}

Define $H_n:= A_{\eta(n)}^{-1}\cdot Z\in {\sf N}(\Rcal).$ From \eqref{eq:propZ}, $H_n$ trivially satisfies (d1) and (d3). Finally, (d2) follows from  \eqref{eq:propZ}, \eqref{eq:defA} and the definition of $w_{\eta(n)}.$
\end{proof}

Let us check that $H_n$ fulfills properties (c1$_n$) to (c9$_n$) provided that $\epsilon_0$ is chosen small enough. 

Item (c1$_{n-1}$) and (d1) imply (c1$_n$) for small enough $\epsilon_0.$ 
Item (c2$_n$) follows from (d1) and (c2$_{n-1}$) for $j<n$ provided that $\epsilon_0$ is small enough, and from (d2)  and (c3$_{n-1}$) for $j=n.$ 
Item (c3$_{n-1}$) and (d1) also give (c3$_n$) for small enough $\epsilon_0.$ 

Observe that (d1) shows that  $\|A_{\eta(j)}\cdot H_n- A_{\eta(j)}\cdot H_{n-1}\|<\epsilon_0 \|A_{\eta(n)}^{-1}\|\, \|A_{\eta(j)}\|$ and $\|A_{\eta(j)+(0,1)}\cdot H_n- A_{\eta(j)+(0,1)}\cdot H_{n-1}\|<\epsilon_0 \|A_{\eta(n)}^{-1}\|\, \|A_{\eta(j)+(0,1)}\|$ on $\Omega_{\eta(j)}$ $\forall j\neq n,$ whereas (d2) gives that $(A_{\eta(n)}\cdot H_n)_3= (A_{\eta(n)}\cdot H_{n-1})_3$ on $\Omega_{\eta(n)}.$ Then, taking into account (c4$_{n-1}$) we get (c4$_n$) provided that $\epsilon_0$ is small enough.

To prove (c5$_n$) (respectively, (c6$_n$), (c7$_n$)) we distinguish cases. If $j\neq n,$ use (d1) and (c5$_{n-1}$) (respectively, (c6$_{n-1}$), (c7$_{n-1}$)) for small enough $\epsilon_0,$  whereas for $j=n$ we use (d2) and (c5$_{n-1}$) (respectively, (c6$_{n-1}$), (c7$_{n-1}$)). Item (c8$_n$) follows from (c8$_{n-1}$), (d1) and (d3) for $\epsilon_0$ small enough. Finally, (c9$_{n}$) is an immediate consequence of (d1) for $\epsilon_0<\frac{\epsilon_1}{k m+1}\frac1{\|A_{\eta(n)}^{-1}\|}.$

The proof of Claim \ref{cl:2ª} is done.
\end{proof}

Finally, let us check that, up to a shrinking of its domain of definition, the null curve $H_{km}$ satisfies the conclusion of Lemma \ref{lem:main}.

Notice that 
$H_{km}(\partial M\cup (\cup_{j=1}^{km}s_{\eta(j)}))\subset \Dcal -\overline{\Dcal_{-r}}$
and $H_{km}({\partial M_0 \cup (\cup_{j=1}^{km}K_{\eta(j)}}))\cap \overline{\Dcal_{1/\epsilon_1}}=\emptyset.$
Indeed, use (c1$_{km}$) and that $\cup_{j=1}^{k m} W_{\eta(j)}\subset \Dcal-\overline{\Dcal_{-r}}$ for the first  assertion,  and (c7$_{km}$) and (c8$_{km}$) for the second one. Denote by $\hat{M}_0$ the connected component of $H_{km}^{-1}(\overline{\hat{\Dcal}})$ containing $M,$ and  observe that  
\begin{equation}\label{eq:H2}
H_{km}(\partial \hat{M}_0)\subset \partial \hat{\Dcal},
\end{equation}
\begin{equation}\label{eq:H1}
M\cup (\cup_{j=1}^{km}s_{\eta(j)})\subset \hat{M}_0^\circ \subset \hat{M}_0\subset M_0^\circ-\cup_{j=1}^{km}K_{\eta(j)},
\end{equation} 
and by the convex hull property  $\hat{M}_0-M^\circ$ consists of $k$ pairwise disjoint closed annuli.

Consider the null curve $\hat{F}_0:=H_{km}|_{\hat{M}_0}\in {\sf N}(\hat{M}_0),$ and observe that $\hat{F}_0$ satisfies the following properties:
\begin{enumerate}[\rm({P}1)]
\item $\|\hat{F}_0-F\|_1<\epsilon_1$ on $M.$ Use (a4) and (c9$_n$), $n=1,\ldots, km.$

\item $\hat{F}_0(\hat{M}_0-M^\circ)\subset \overline{\hat{\Dcal}}-\overline{\Dcal_{-r}}.$ From \eqref{eq:H2}, it suffices to check that $\hat{F}_0(\hat{M}_0-M^\circ)\cap \overline{\Dcal_{-r}}=\emptyset.$ Let $P\in \hat{M}_0-M^\circ.$ Taking into account \eqref{eq:H1}, there are three possible locations for the point $P:$
\begin{itemize}
\item $P\in  \tilde{s}_{\eta(n)}\cup \tilde{\alpha}_{\eta(n)}$ for some $n\in \{1,\ldots,km\}.$ Combining (c2$_{km}$) and \eqref{eq:comple1}, we get $\hat{F}_0(P)\notin \overline{\Dcal_{-r}}.$ 
\item $P\in \tilde{t}_{\eta(n)}\cap \Omega_{\eta(n)}$ for some $n\in\{1,\ldots,km\}.$ Use that $\hat{F}_0(Q_{\eta(n)})\in W_{\eta(n)}$ (see (c1$_{km}$)), $(\hat{F}_0(Q_{\eta(n)})+\esca{e_{\eta(n)}}^\bot)\cap \overline{\Dcal_{-r}}=\emptyset$ (see \eqref{eq:comple1}) and $\esca{\hat{F}_0(P)-\hat{F}_0(Q_{\eta(n)}),e_{\eta(n)}}>0$ (see (c5$_{km}$)), and recall that $e_{\eta(n)}$ is the outward pointing normal to $\partial \Dcal$ at $\pi_\Dcal (p_{\eta(n)}).$
\item $P\in \tilde{t}_{\eta(n)-(0,1)}\cap \Omega_{\eta(n)}.$ Use that $\hat{F}_0(Q_{\eta(n)-(0,1)})\in W_{\eta(n)},$ and argue as above but using (c6$_{km}$) instead of (c5$_{km}$).
\end{itemize}

\item $\dist_{(\hat{M}_0,\hat{F}_0)}(P_0,\partial \hat{M}_0)> \dist_{(M,F)}(P_0,\partial M)+\sqrt{\frac{d}{\kappa(\mathcal{D})}}.$  Taking into account (P1), it suffices to prove that $\dist_{(\hat{M}_0,\hat{F}_0)}(\partial M,\partial \hat{M}_0)>\sqrt{\frac{d}{\kappa(\mathcal{D})}}+\epsilon_1.$ Let  $P\in \partial \hat{M}_0$ and let $\gamma \subset \hat{M}_0-M^\circ$ be a connected curve connecting $\partial M$ and $P.$ There are two possible locations for the point $P:$ 
\begin{itemize}
\item If $P\in \tilde{t}_{\eta(n)}$ (see Figure \ref{fig:curvas1}) for some $n\in \{1,\ldots, km\},$ then by \eqref{eq:H1} there exists a sub-arc $\hat{\gamma}\subset \gamma\subset \tilde{s}_{\eta(n)}$ connecting $\tilde{\alpha}_{\eta(n)}\cup \tilde{\alpha}_{\eta(n)+(0,1)}$ and $\tilde{t}_{\eta(n)}.$ Thus, Remark \ref{rem:AF} and (c4$_{km}$)   give that  $\sqrt{d/\kappa(\Dcal)}+\epsilon_1<L(\hat{F}_0(\hat{\gamma}))\leq L(\hat{F}_0(\gamma)).$
\begin{figure}[ht]
    \begin{center}
    \scalebox{0.45}{\includegraphics{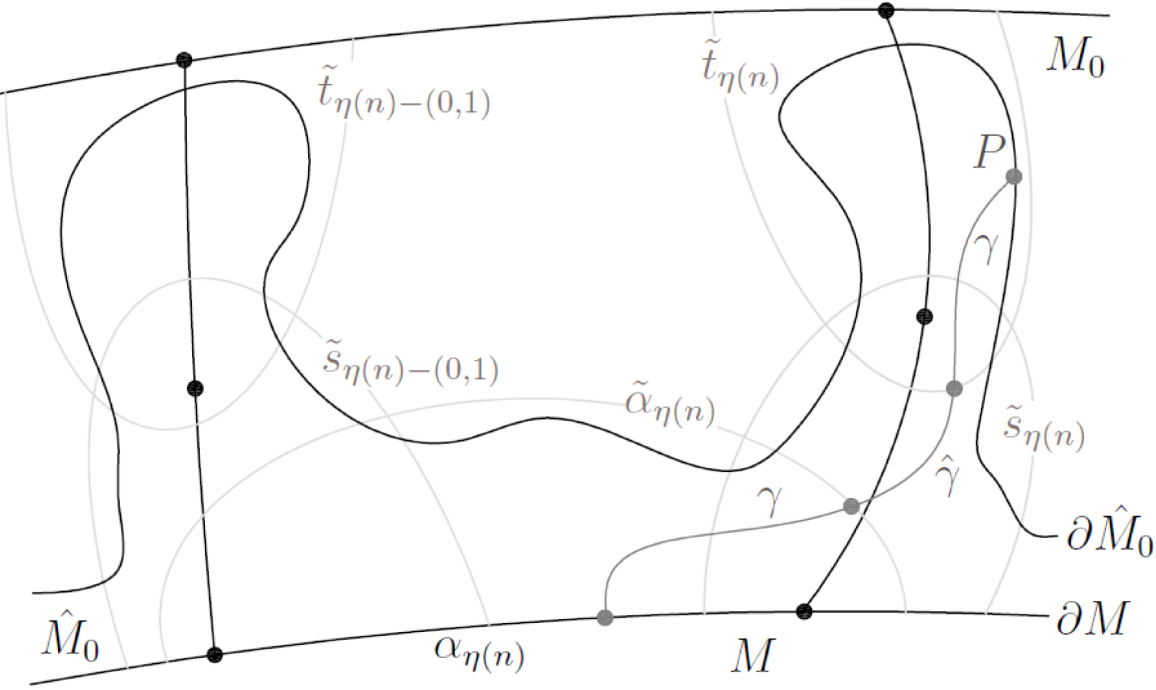}}
        \end{center}
        \vspace{-0.5cm}
\caption{$P\in \tilde{t}_{\eta(n)}.$}\label{fig:curvas1}
\end{figure}

\item Suppose now that $P\in \Omega_{\eta(n)}\cap(\tilde{s}_{\eta(n)-(0,1)} \cup \tilde{\alpha}_{\eta(n)}\cup\tilde{s}_{\eta(n)})$ (see Figure \ref{fig:curvas2-3}) for some  $n\in\{1,\ldots,km\}.$ By the preceding case, we can restrict ourselves to the case $\gamma\cap(\cup_{j=1}^{km}\tilde{t}_{\eta(j)})=\emptyset.$ By \eqref{eq:H1} again, $P \notin s_{\eta(n)-(0,1)}\cup\alpha_{\eta(n)}\cup s_{\eta(n)},$ and therefore there exists a sub-arc $\hat{\gamma}\subset\gamma$ contained in $\Omega_{\eta(n)}\cap (\tilde{s}_{\eta(n)-(0,1)}\cup \tilde{\alpha}_{\eta(n)}\cup\tilde{s}_{\eta(n)})$ connecting a point $Q\in s_{\eta(n)-(0,1)}\cup\alpha_{\eta(n)}\cup s_{\eta(n)}$ and $P.$    Since  (c1$_{km}$), $\hat{F}_0(Q)\in W_{\eta(n)},$ and (c2$_{km}$) and \eqref{eq:H2} give $\hat{F}_0(P)\in \partial \hat{\Dcal}\cap (W_{\eta(n)}+\doble{ e_{\eta(n)} }^\bot).$ Therefore \eqref{eq:comple1} implies that $\sqrt{d/\kappa(\Dcal)}+\epsilon_1<{\rm dist}(\hat{F}_0(P),\hat{F}_0(Q))\leq L(\hat{F}_0(\gamma)).$
\begin{figure}[ht]
    \begin{center}
    %\hspace*{-0.25cm}
   \scalebox{0.45}{\includegraphics{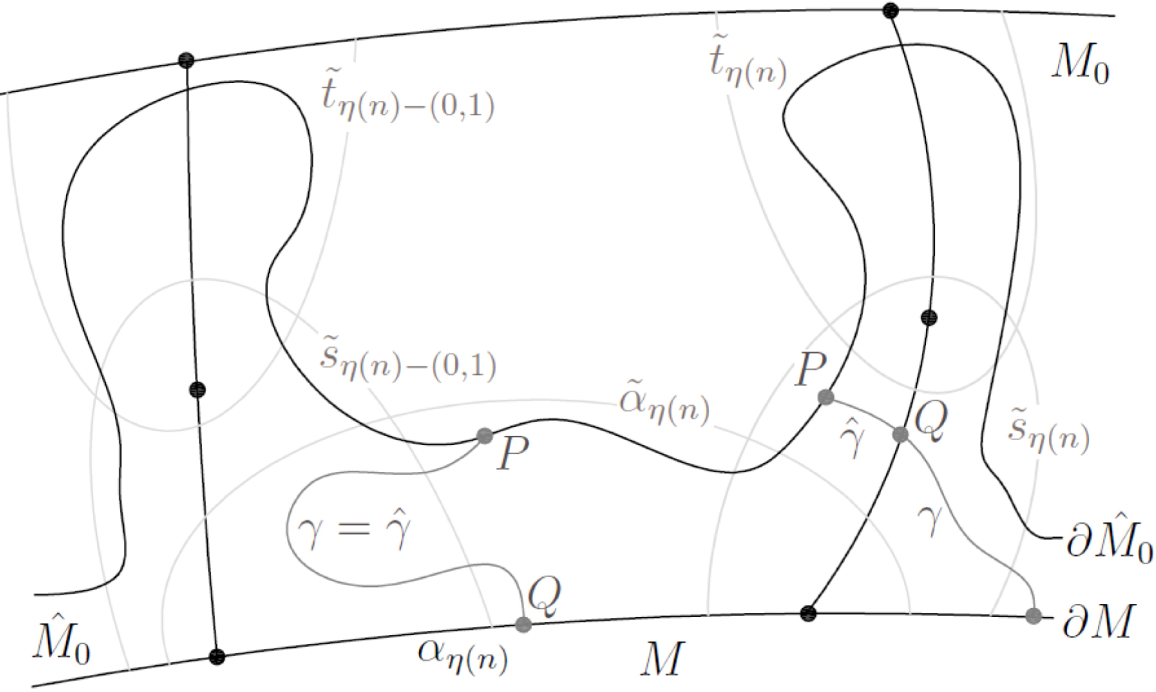}}
        \end{center}
        \vspace{-0.5cm}
\caption{$P\in \Omega_{\eta(n)}\cap(\tilde{s}_{\eta(n)-(0,1)} \cup \tilde{\alpha}_{\eta(n)}\cup\tilde{s}_{\eta(n)}).$}\label{fig:curvas2-3}
\end{figure}
\end{itemize}
\end{enumerate}

Finally define $\hat{F}:=\hat{F}_0|_{\hat{M}}\in {\sf N}(\hat{M}),$ where $\hat{M}\subset \hat{M}_0$ is close enough to $\hat{M}_0$ so that $\hat{F}(\partial\hat{M})\subset \hat{\Dcal}-\overline{\hat{\Dcal}_{-\epsilon_1}}$ and $\hat{F}$ satisfies (P1), (P2) and (P3). The region $\hat{M}$ and the null curve $\hat{F}$ solve Lemma \ref{lem:main}.

%%%%%%%%%%%%%%%%%
%%%%%%%%%%%%%%%%%
%%%%%%%%%%%%%%%%%

%%%%%%%%%%%%%%%%%%%%%%%%%%%%%%%%%%%%%%%%%%%%%%%%%%%%%%%%%%%%%%%%%%%%%%%%%%%%%%%%%%%%%%%%%%%%%%%%%%%%%%%%%%%%%%%%%%%%%%%%%%%%%%%%%%%%%%%%%%%%%%%%

\begin{thebibliography}{999999}

\bibitem[\bf{AFL}]{a-f-l} A. Alarc\'{o}n, I. Fern\'{a}ndez and F.J. L\'{o}pez, {\em Complete minimal surfaces and harmonic functions.} Comment. Math. Helv. (in press).

\bibitem[\bf{AFM}]{a-f-m} A. Alarc\'{o}n, L. Ferrer and F. Mart\'{i}n, {\em Density theorems for complete minimal surfaces in $\r^3$.}
Geom. Funct. Anal. {\bf 18} (2008), 1-49.

\bibitem[\bf{AL}]{al} A. Alarc\'{o}n and F.J. L\'{o}pez, {\em Minimal surfaces in $\R^3$ properly projecting into $\R^2$}. J. Differential Geom. (in press).

\bibitem[\bf{Bi}]{bis} E. Bishop, {\em Mappings of partially analytic spaces.} Amer. J. Math. {\bf 83} (1961), 209-242.

\bibitem[{\bf Bo}]{bou} J. Bourgain, {\em On the radial variation of bounded analytic functions on the disc.} Duke Math. J. {\bf 69} (1993), 671-–682.

\bibitem[{\bf Br}]{bry} R. Bryant, {\em Surfaces of mean curvature one in hyperbolic space. Th\'{e}orie des vari\'{e}t\'{e}s minimales et applications (Palaiseau, 1983--1984),} Ast\'{e}risque {\bf 154–-155} (1987), 321–-347.

\bibitem[\bf{Ca}]{calabi} E. Calabi, {\em Problems in differential geometry.} Ed. S. Kobayashi and J. Ells, Jr., Proceedings of the
United States-Japan Seminar in Differential Geometry, Kyoto, Japan, 1965. Nippon Hyoronsha Co., Ltd., Tokyo (1966) 170.

\bibitem[\bf{CM}]{c-m} T.H. Colding and W.P. Minicozzi II, {\em The Calabi-Yau conjectures for embedded surfaces}. Ann. of Math. (2)
{\bf 167} (2008), 211-243.

\bibitem[\bf{FMM}]{f-m-m} L. Ferrer, F. Mart\'{i}n and W.H. Meeks III, {\em Existence of proper minimal surfaces of arbitrary topological type}. Preprint (arXiv:0903.4194).

\bibitem[\bf{Jo}]{jones} P. W. Jones, {\em A complete bounded complex submanifold of $\c^3.$} Proc. Amer. Math. Soc. {\bf 76} (1979), 305–-306.

\bibitem[\bf{JX}]{j-x} L.P.M. Jorge and F. Xavier, {\em A complete minimal surface in $\r^3$ between two parallel planes.}
Ann. of Math. (2) {\bf 112} (1980), 203-206.

\bibitem[\bf{LMM}]{l-m-m} F.J. L\'{o}pez, F. Mart\'{i}n and S. Morales, {\em Adding handles to Nadirashvili's surfaces.}
J. Differential Geom. {\bf 60} (2002), 155-175.

\bibitem[\bf{MM1}]{m-m1} F. Mart\'{i}n and S. Morales, {\em Complete proper minimal surfaces in convex bodies of $\r^3$.} Duke Math. J. {\bf 128} (2005), 559-593.

\bibitem[\bf{MM2}]{m-m2} F. Mart\'{i}n and S. Morales, {\em Complete proper minimal surfaces in convex bodies of $\r^3.$ II.
The behavior of the limit set.}  Comment. Math. Helv. {\bf 81} (2006), 699--725.

\bibitem[\bf{MUY1}]{muy1} F. Mart\'{i}n, M. Umehara and K. Yamada, {\em Complete bounded null curves immersed in $\c^3$ and ${\rm SL}(2,\c)$.} Calc. Var. Partial Differential Equations {\bf 36} (2009), 119-139.

\bibitem[\bf{MUY2}]{MUY3} F. Mart\'{i}n, M. Umehara and K. Yamada, {\em Erratum: Complete bounded null curves immersed in $\c^3$ and ${\rm SL}(2,\c)$.}  Preprint.

\bibitem[\bf{MUY3}]{muy2} F. Mart\'{i}n, M. Umehara and K. Yamada, {\em Complete bounded holomorphic curves immersed in $\c^2$ with arbitrary genus.} Proc. Amer. Math. Soc. {\bf 137} (2009), 3437-3450.

\bibitem[{\bf MPR}]{m-p-r} W.H. Meeks III, J. P\'{e}rez and A. Ros, {\em The embedded Calabi-Yau conjectures for finite genus}. In preparation.

\bibitem[\bf{MY}]{meeksyau} W.H. Meeks III and S.-T. Yau, {\em The classical Plateau problem and the topology of three-dimensional manifolds. The embedding of the solution given by Douglas Morrey and an analytic proof of Dehn's lemma.} Topology {\bf 21} (1982), 409-442.

\bibitem[\bf{Mi}]{min} H. Minkowski, {\em Volumen und Oberfl\"ache}. Math. Ann. {\bf 57} (1903), 447-495.

\bibitem[\bf{Nad}]{nadi} N. Nadirashvili, {\em Hadamard's and Calabi-Yau's conjectures on negatively curved and minimal surfaces.}
Invent. Math. {\bf 126} (1996), 457-465.

\bibitem[\bf{Nar}]{nar} R. Narasimhan, {\em Imbedding of holomorphically complete complex spaces.} Amer. J. Math. {\bf 82} (1960), 917-934.

\bibitem[{\bf PP}]{p-p} F. Pacard and F.A.A. Pimentel, {\em Attaching handles to constant-mean-curvature-1 surfaces in hyperbolic 3-space.} J. Inst. Math. Jussieu {\bf 3} (2004), 421--459.

\bibitem[\bf{Re}]{rem} R. Remmert, {\em Sur les espaces analytiques holomorphiquement s\'{e}parables et holomorphiquement convexes.} C. R. Acad. Sci. Paris {\bf 243} (1956), 118-121.

\bibitem[{\bf RUY}]{r-u-y} W. Rossman, M. Umehada and K. Yamada, {\em Mean curvature 1 surfaces in hyperbolic 3-space with low total curvature, I,} Adv. Stud. Pure Math. {\bf 34} (2002), 245--253.

\bibitem[\bf{SY}]{s-y} R. Schoen and S.T. Yau, {\em Lectures on Harmonic Maps.}
Conference Proceedings and Lecture Notes in Geometry and Topology,
II. International Press, Cambridge, MA, 1997.

\bibitem[\bf{UY}]{u-y} M. Umehara and K. Yamada, {\em Complete surfaces of constant mean curvature $1$ in the hyperbolic $3$-space.}  Ann. of Math. (2) {\bf 137}  (1993), 611-638.

\bibitem[\bf{Ya1}]{ya1} S.-T. Yau, {\em Problems section. Seminar on Differential Geometry.} Ann. of Math. Stud. {\bf 102}, Princeton Univ. Press, Princeton, N.J., 1982, 669–-706.

\bibitem[\bf{Ya2}]{ya2} S.-T. Yau, {\em Review of geometry and analysis.} Mathematics: frontiers and perspectives, Amer. Math. Soc., Providence, RI, 2000, 353–-401.

\end{thebibliography}
\end{document}